\definecolor{linkcol}{rgb}{0,0,0.6}
\definecolor{hrefcol}{rgb}{0,0.5,0.5}
\definecolor{citecol}{rgb}{0.6,0,0.6}
\definecolor{amber}{rgb}{0.5,0.4,0.0}
\def\l@subsection{\@tocline{2}{0pt}{4pc}{5pc}{}}
\numberwithin{equation}{section}
\newcommand\numeq{\stepcounter{equation}\tag*{{\sc(\arabic{section}.\arabic{equation})}}}
\renewcommand\th@plain{\slshape}
\newtheoremstyle{result}
  {1.7ex}
  {1.7ex}
  {\slshape}
  {0pt}
  {\scshape}
  {.}
  { }
  {\thmname{#1}\thmnumber{ #2}\thmnote{\if&#3& #3\else { }$-$ #3\fi}}
\newtheoremstyle{unique}
  {1.7ex}
  {1.7ex}
  {}
  {0pt}
  {\scshape}
  {.}
  { }
  {\thmnote{#3}}
\newtheoremstyle{other}
  {1.7ex}
  {1.7ex}
  {}
  {0pt}
  {\scshape}
  {.}
  { }
  {\thmname{#1}\thmnumber{ #2}\thmnote{\if&#3& #3\else { }$-$ #3\fi}}
\theoremstyle{result}
\newtheorem{THM}{Theorem}[section]
\newtheorem{PROP}[THM]{Proposition}
\newtheorem{COR}[THM]{Corollary}
\theoremstyle{unique}
\newtheorem{UNI}{}
\theoremstyle{other}
\newtheorem{DEF}[THM]{Definition}
\newtheorem{NOT}[THM]{Notation}
\newtheorem{RMK}[THM]{Remark}
\newtheorem{RMK!}[THM]{Important Remark}
\newtheorem{NB}[THM]{Nota bene}
\newtheorem{EXP}[THM]{Example}
\newtheorem{EXS}[THM]{Examples}
\def\bb{\mathbb}
\def\fk{\mathfrak}
\def\0{\varnothing}
\def\scr{\mathcal}
\def\<{\langle}
\def\>{\rangle}
\newcommand{\keywd}[1]{{\fontseries{b}\selectfont{\boldmath#1}}}
\newcommand{\nocontentsline}[3]{}
\let\origcontentsline\addcontentsline
\newcommand\stoptoc{\let\addcontentsline\nocontentsline}
\newcommand\resumetoc{\let\addcontentsline\origcontentsline}
\title{Floer homotopy theory for monotone Lagrangians}
\author{Ciprian M. Bonciocat}
\address{Department of Mathematics, Stanford University, 450 Jane Stanford Way, Building 380,
Stanford, CA 94305-2125, USA.}
\email{ciprianb@stanford.edu}
\begin{document}


\begin{abstract}
    We circumvent one of the roadblocks in associating Floer homotopy types to monotone Lagrangians, namely the curvature phenomena occurring in high dimensions. Given $N \ge 3$ and $R$ a connective $\mathbb E_1$-ring spectrum, there is a notion of an $N$-truncated, $R$-oriented flow category, to which we associate a module prospectrum over the Postnikov truncation $\tau_{\le N - 3}R$. This endows ordinary Floer cohomology with an action of the Steenrod algebra over $\tau_{\le N-3}R$, and also induces certain generalized cohomology theories. We give sufficient conditions for a closed embedded monotone Lagrangian to admit such well-defined invariants for $N = N_\mu$ the minimal Maslov number, and $R = MU$ complex bordism. Finally, we formulate Oh-Pozniak type spectral sequences for these invariants, and show that in the case of $\mathbb{RP}^n \subset \mathbb{CP}^n$ they provide further restrictions on the topology of clean intersections with a Hamiltonian isotopy, not detected by ordinary Floer (co)homology.
\end{abstract}
\maketitle

\setcounter{tocdepth}{2}
\tableofcontents


\newpage
\section*{Introduction}\label{sec:intro}

  Given any classical Floer homology theory, a Floer spectrum or \emph{Floer homotopy type} is a lift thereof to the stable homotopy category of (pro-)modules over some ring spectrum $R$. Ideally, $R$ would be the sphere spectrum, but in many situations this is (at least a priori) not the case. One main source of such stable homotopy upgrades is constituted roughly by the following additional input data:
  \begin{enumerate}
    \item A \emph{flow category}, i.e.~ coherent data of all the higher-dimensional compactified moduli spaces associated to the Floer equation.
    \item Suitably coherent \emph{$R$-orientations} associated to them.
  \end{enumerate}
  For foundational literature on Floer homotopy theory, see Manolescu's finite-dimensional approximation method \cite{ManSWF}, Cohen-Jones-Segal's construction \cite{CJS}, Cohen's subsequent generalization \cite{Cohen}, Large \cite{Large}, Abouzaid-Blumberg \cite{AB}, or various re-interpretations of the same methods e.g. Abouzaid-Blumberg \cite{ABMorava}, Cote-Kartal \cite{CoteKartal}, Porcelli-Smith \cite{PoSm1, PoSm2}. Throughout this paper, we closely follow and extend the viewpoint developed in our previous work \cite{me}. Applications of Floer homotopy theory are numerous, so we are obliged to only mention a few: Manolescu's Seiberg-Witten theory \cite{ManSWF}, Lipshitz-Sarkar's Khovanov theory \cite{LS}, Abouzaid-Blumberg's Arnold conjecture over $\bb F_p$ \cite{ABMorava}, Cohen's Viterbo isomorphism \cite{CohenCotangent}, equivariant structures of Cote-Kartal \cite{CoteKartal} and Rezchikov \cite{Rez}, and Blakey's lower bounds on degenerate Lagrangian intersections \cite{Blakey}.

  The standard methods of constructing a Floer homotopy type, e.g. \cite{CJS,AB}, really use the full power of part {\sc i} of the data, namely the existence of sufficiently nice compactified moduli spaces of \emph{all dimensions}. This already makes the theory difficult or even impossible to use in the presence of certain types of bubbling, e.g. non-exact/aspherical Hamiltonian and Lagrangian theories, or Instanton Floer theory; for a slightly more detailed discussion, see {\sc Exs.~\ref{exs:mot}} below. In the present paper, we provide a way of overcoming this difficulty, at the cost of downgrading the ring spectrum $R$ to a suitable Postnikov truncation of it (which is analogous to tensoring a module over a commutative ring $A$ by a quotient $A/\fk a$).
  
  We note that an earlier workaround discovered by Porcelli-Smith \cite{PoSm1} consists in using manifolds with Baas-Sullivan singularities (whose bordism theory at least morally should be the representing functor of the Postnikov truncation), and produce a generalized bordism (co)homology theory resembling a truncated $\pi_*^{\rm st}$ without building an actual Floer spectrum. However, it is not clear how to recover ordinary Floer cohomology and Steenrod operations from this bordism theory.

  Our two main theoretical results are stated below, in a compressed form. For more detailed statements, and where they are located throughout the paper, see the references cited therein, or the ``Outline'' below.

  \begin{UNI}[Theorem A]\sl
    An $N$-truncated $R$-oriented flow category $\scr F$ {\sc(Def.~\ref{def:truncfl})} induces a well-defined module pro-spectrum over the Postnikov truncation $\tau_{\le N - 3}R$ {\sc(Def.~\ref{def:post})}. This suffices to induce an action of the Steenrod algebra on the ordinary Floer cohomology of $\scr F$ with any coefficients $A$:
    \[ \scr A^*_A(\tau_{\le N - 3} R) \quad \circlearrowleft \quad {\rm HF}^*(\scr F; A),\]
    cf.~{\sc Defs.~\ref{def:hf},~\ref{def:steen}} and {\sc Thm.~\ref{thm:act}}. It also induces generalized (co)homology theories associated to $\scr F$, under certain finiteness assumptions, cf.~{\sc Def.~\ref{def:fin}} and {\sc Thm.~\ref{thm:gcoh}}.
  \end{UNI}

  \begin{UNI}[Theorem B]\sl
    A closed embedded monotone Lagrangian submanifold $L$ of a closed symplectic manifold $M$, satisfying the conditions written at the beginning of {\sc\S\ref{ssec:setup}}, and equipped with a $U$-brane {\sc(Def.~\ref{def:ubr}, Ex.~\ref{exp:sourceu})}, induces an action of the Steenrod algebra 
    \[ \scr A^*_A(\tau_{\le N_\mu - 3} MU) \quad \circlearrowleft \quad {\rm HF}^*(M,L;A),\]
    where $N_\mu$ is the minimal Maslov number. The action is independent of the auxiliary choices of Hamiltonian and almost-complex structure {\sc(Thm.~\ref{thm:welldef})}. Likewise, there are well-defined generalized Floer (co)homology theories, under suitable finiteness assumptions {\sc(Thm.\,\sc \ref{thm:welldefZ})}.

    The standard methods due to Oh, Pozniak {\sc(Thm.~\ref{thm:ohpoz})} and Albers, Piunikhin-Salamon-Schwarz {\sc(Thm.~\ref{thm:albpss})} for computing Lagrangian Floer cohomology from clean intersections also extend to modules over the Steenrod algebra, or to generalized (co)homology theories.
  \end{UNI}

  We remark that the existence of a $U$-brane actually forces $N_\mu$ to be even {\sc(Rmk.~\ref{rmk:constrbr})}, so in fact $\tau_{\le N_\mu - 3} MU = \tau_{\le N_\mu - 4} MU$ since complex bordism is trivial in odd degrees. At present, due to a lack of development in the area of pro-spectra, we cannot vouch for the independence of the pro-spectrum with respect to the Hamiltonian and almost-complex structure, partly due to the ambiguity of what such a notion of equivalence of pro-spectra really means, cf.~ {\sc Rmk.~\ref{rmk:prosp}} for a proposed definition, and the state of the literature. Before we introduce our application, we mention a certain class of Steenrod operations that lift to Postnikov truncations of complex bordism, that will be used in the application.

  \begin{UNI}[Fact C] (cf.~ \cite{Yag,Milnor}) \sl
    At any prime $p$, there are cohomology operations $Q_i \in \scr A_{\bb F_p}^{2p^{i} - 1}(\tau_{\le r} MU)$ for all $2p^i - 2 \le r$ (cf.~ {\sc Ex.~\ref{exp:miln}}, {\sc Prop.~\ref{prop:qimu}}), satisfying the following properties:
    \begin{enumerate}
      \item Their images in the ordinary Steenrod algebra $\scr A^*_{\bb F_p}(\bb S)$ recover the Milnor primitives, which satisfy the recursive definitions
      \[ Q_0 = b, \qquad Q_{i+1} = \scr P^{p^i}Q_i - Q_i \scr P^{p^i}, \]
      where $b$ is the Bockstein homomorphism, and $\scr P^n = {\rm Sq}^{2n}$ when $p = 2$.
      \item Leibniz rule: $Q_i(x\cdot y) = Q_i x \cdot y + (-1)^{|x|} x \cdot Q_i y$.
      \item Power rule: $Q_i x = x^{2^{i+1}}$ for all $x \in H^1(X; \bb F_2)$, where $X$ is an actual space; similarly $Q_i x = (bx)^{p^i}$ for all $x \in H^1(X; \bb F_p)$ at odd primes $p$.
    \end{enumerate}
    In particular, given $L \subset M$ as in {\sc Thm.\,B}, these operations exist on ${\rm HF}^*(M,L;\bb F_p)$ for all $i \ge 0$ satisfying $2p^i \le N_\mu - 1$.
  \end{UNI}
  
  As a sample application of this theory, we study clean intersections of $\bb{RP}^n$ with a Hamiltonian isotopy inside $\bb{CP}^n$. It already follows purely from Floer homology (cf.~ {\sc Thm.~\ref{thm:conn}}) that this clean intersection cannot be connected, unless it is the whole of $\bb{RP}^n$. The next simplest case to consider is when the intersection is a disjoint union of a point $\{p\}$ and another connected component $C$. Again, it follows purely from Floer homology (cf.~ {\sc Thm.~\ref{thm:pt+connI}}) that $C$ must have the same dimension and singular $\bb F_2$-homology as $\bb{RP}^{n-1}$. Our main application extracts a bit of extra information about $C$, using the Steenrod action of the Milnor primitives:

  \begin{UNI}[Theorem D]\sl
    For odd $n \ge 3$, 
    \begin{enumerate}
      \item $\bb{RP}^n \subset \bb{CP}^n$ admits a diagonal-respecting $U$-brane {\sc(Def.~\ref{def:ubr})}, and induces a well-defined Steenrod algebra action over $\tau_{\le n-3}MU$, as in {\sc Thm.\,B.}
      \item If $\Phi_t$ is a non-autonomous Hamiltonian flow such that $\bb{RP}^n \cap \Phi_1(\bb{RP}^n)$ is a clean intersection made up of a disjoint point $\{p\}$, and a connected manifold $C$, then the cohomology of $C$ satisfies certain relations involving the mod-2 $Q_i$ operations of {\sc Fact\,C}, which are also satisfied by $\bb{RP}^{n-1}$ (cf.~ {\sc Thm.~\ref{thm:pt+connII}, Cor.~\ref{cor:pt+conn}} for precise statements).
      \item Also, certain mod-2 characteristic classes $q_i$ {\sc(Def.~\ref{def:qi})} of the tangent bundle of $C$ are guaranteed to be nonzero (cf.~{\sc Thm.~\ref{thm:pt+connIII}} for the precise formulation.)
    \end{enumerate}
  \end{UNI}

  We conjecture that $C$ must be at least homotopy-equivalent to $\bb{RP}^{n-1}$, as this is the only example we could find of such a clean intersection, cf.~{\sc Rmk.~\ref{rmk:exofc}}. We end the paper with {\sc Rmk.~\ref{rmk:next}}, in which we discuss further potential extensions and consequences of our work.

  \stoptoc\subsection*{Outline} Here is a more detailed list of our definitions and theorems.
  \begin{enumerate}
    \item In {\sc Def.~\ref{def:truncfl}}, we introduce the notion of an $N$-truncated flow category, containing moduli spaces only up to dimension $N-2$, and discuss $G$-structures {\sc(Def.~\ref{def:g-flow})} and more generally $R$-orientations {\sc(Def.~\ref{def:r-or-flow})} on flow categories.
    \item In {\sc Def.~\ref{def:steen}}, we define the Steenrod algebra $\scr A^*_M(R)$ over an $\bb E_1$-ring spectrum $R$ with $M$-coefficients. Then, we state {\sc Thm.\,A} in {\sc Thms.~\ref{thm:act},~\ref{thm:gcoh}}, and prove them in {\sc\S\ref{ssec:act}}, by constructing a pro-spectrum \`a la Cohen-Jones-Segal. The Milnor $Q_i$ operations of {\sc Fact\,C} are also reviewed in this section, cf.~ {\sc Ex.~\ref{exp:miln}} and {\sc Prop.~\ref{prop:qimu}}.
    \item In {\sc\S\ref{sec:apps}}, we discuss the theory of monotone Lagrangians, under certain topological assumptions, cf.~ beginning of {\sc\S\ref{ssec:setup}}. We construct an $N_\mu$-truncated flow category $\scr F(M,L)$ in {\sc Thm.\,\ref{thm:lagflow}} using Large's smooth gluing \cite{Large}, and also set up an $\bb E_1$-coherent index theorem for abstract holomorphic strips, cf.~ {\sc Defs.\,\ref{def:aps}, \ref{def:ind}} and {\sc Thm.\,\ref{thm:ind}}. 
    \item We then define the notion of a $U$-brane {(\sc Def.~\ref{def:ubr})} for the Lagrangian $L \subset M$, which by our index theorem endows $\scr F(M,L)$ with a $U$-structure, inducing a Steenrod algebra action on classical Floer cohomology ${\rm HF}^*(M,L)$ over the ring spectrum $\tau_{\le N_\mu - 3}MU$ {\sc(Thm.~\ref{thm:welldef})}, cf.~ {\sc Thm.\,B} above, as well as generalized cohomology theories with finite homotopy-support ({\sc Def.~\ref{def:fin}, Thm.~\ref{thm:welldefZ}}).
    \item We further upgrade the classical Albers-PSS isomorphism {(\sc Thm.~\ref{thm:albpss})} and the spectral sequences due to Oh and Pozniak {(\sc Thm.~\ref{thm:ohpoz})}, to graded modules over the Steenrod algebra, and/or in generalized (co)homology theories, which are useful for computing Floer cohomology from clean intersections.
    \item We end with applications of our theory to $\bb{RP}^n \subset \bb{CP}^n$, showing how the Steenrod algebra structure provides further restrictions on the topology of possible clean intersections of $\bb{RP}^n$ with a Hamiltonian isotopy, cf.~ the preliminary results {\sc Thms.~\ref{thm:conn}, \ref{thm:pt+connI}} using only classical Floer cohomology, and then {\sc Thms.~\ref{thm:pt+connII},~\ref{thm:pt+connIII}, Cor.~\ref{cor:pt+conn}} using the Milnor $Q_i$ operations to strengthen the conclusion of {\sc Thm.~\ref{thm:pt+connI}}. Finally, in {\sc Rmk.~\ref{rmk:next}}, we discuss further ramifications of these ideas.
  \end{enumerate}

  \subsection*{Acknowledgments} I would like to thank my advisor Ciprian Manolescu, supported by the Simons Investigator Grant, for directing me and providing many useful suggestions throughout the writing process, as well as Mohammed Abouzaid, Kenneth Blakey, Andrew Blumberg, Sebastian Haney, Eleny Ionel, Bari\c s Kartal, Noah Porcelli, Ivan Smith, and Hiro Lee Tanaka, for illuminating comments and discussions. I am also greatly indebted to my sponsors for the William R. and Sara Hart Kimball Fellowship offered by Stanford University, and to the Simons Foundation for organizing many conferences that benefitted my research.

  \resumetoc
  \newpage
\section{General construction}\label{sec:constr}

  \subsection{Truncated flow categories}\label{ssec:truncflow} In this subsection, we review the notion of flow category \cite[\sc Def.\,2.9]{me}, suitably modified to the scope of our current work. Here are the notable differences:
  \begin{enumerate}
    \item We allow the indexing set $(I, <)$ to be potentially unbounded both above and below.
    \item We do not work in Morse-Bott generality here (i.e.~ the $\scr C_i$'s of \cite{me} will be points).
    \item Most importantly, we only require moduli spaces to exist up to a specified dimension, hence the adjective \emph{truncated}.
  \end{enumerate}

  \begin{DEF}[$I$-stratifications and $I$-corners]
    Given a set $I$, an \keywd{$I$-stratified} space is a topological space $X$, with a collection of closed subsets $\{\partial^i X \subset X\}_{i \in I}$. An $n$-dimensional smooth manifold with \keywd{$I$-corners} is a an $I$-stratified space with an atlas of charts modelled after Euclidean spaces with $I$-corners $\mathbf R^{n - |J|} \times \mathbf R_{\ge 0}^J$ for subsets $J \subset I$ of size at most $n$, with strata given by the faces of the corner. For more details, see \cite[\sc Def.\,2.3]{me}.
  \end{DEF}

  \begin{DEF}[Truncated flow category]\label{def:truncfl} Cf.~ our \cite[\sc Def.~2.9]{me}, also Porcelli-Smith \cite[\sc Defs.~5.28,~5.32]{PoSm1}
    Given $3 \le N \le \infty$, we define an \keywd{$N$-truncated} flow category to be a collection of the following data:
    \begin{enumerate}
      \item A totally ordered set $(I,<)$, embeddable in $(\bb Z, <)$.
      \item A grading function $\mu : I \to \bb Z$, with the property that $\mu^{-1}(d)$ is finite for all $d \in \bb Z$.
      \item For every pair $i < j$ of indices in $I$ with $\mu(i) - \mu(j) < N$, a compact smooth $(\mu(i) - \mu(j) - 1)$-dimensional manifold with $\{u : i < u < j\}$-corners $\scr M_{i,j}$.
      \item\label{i:ident} Stratification-respecting identifications $\scr M_{i,u} \times \scr M_{u,j} \cong \partial^u \scr M_{i,j}$, which are associative in the most straightforward sense.
    \end{enumerate}
  \end{DEF}

  \begin{EXS}[Motivation for truncatedness]\label{exs:mot} The following cases illustrate the ubiquity of this notion:
    \begin{enumerate}
      \item \keywd{Hamiltonian Floer theory.} Given a monotone symplectic manifold $M$, its \emph{(twice) minimal Chern number} $N_{2c_1}$ is the positive generator of the image of the homomorphism $\int_{S^2} 2c_1(TM) : \pi_2(M) \to \bb Z$. For moduli spaces with $\mu(i) - \mu(j) < N_{2c_1}$, compactness up to broken trajectories can be ensured. For $\mu(i) - \mu(j) < N_{4c_1} = 2N_{2c_1}$, the same is true but up to potentially breaking off a simple bubble, which can be treated as part of the interior due to the $\mathbb C^\times$-parameter space required to glue. At least prima facie, one cannot go higher, because (derived) orbifold structures may become necessary for compactifying the moduli spaces. In theory, a method similar to the Cohen-Jones-Segal construction would be possible, after inverting certain primes, but \cite{AB} is probably a better framework for handling this.
      \item \keywd{Lagrangian Floer theory.} Given also monotone Lagrangians $L_0, L_1 \subset M$, their \emph{minimal Maslov number} $N_\mu$ is the minimum of the positive generators of the images of the two Maslov indices $\mu_{0,1} : \pi_2(M,L) \to \bb Z$. One always has $N_\mu \le N_{2c_1}$. For moduli spaces with $\mu(i) - \mu(j) < N_\mu$, compactness can again be ensured up to broken trajectories. As soon as we go beyond this limit, disk bubbles (even when they are simple) produce actual boundary, since the gluing parameter space is $(0,\epsilon)$. Treating them as ``interior'' would make the bordism theory trivial, so using \cite{AB}'s setup does not solve the problem. One could define curved versions of flow categories, but it becomes unclear how to assign homotopy types to them. If $L_0, L_1$ are Hamiltonian isotopic, there seems to be a way to produce cancelling cobordisms for disk bubbles appearing either on $L_0$ or $L_1$, which can be used to kill the extra boundary, cf.~{\sc Rmk.~\ref{rmk:impr}}. However, beyond $N_{2\mu} = 2N_{\mu}$, the same orbifold issue arises.
      \item \keywd{Instanton Floer theory.} Depending on the particular flavor, there are two sources of trouble: reducible representations, which break smoothness of the infinite-dimensional manifold, and bubbling (which typically introduces singularities of very high codimension). The former can at times either be avoided entirely, by imposing some ``admissibility condition'' on the manifold. The latter only occurs when $\mu(i) - \mu(j) < N = 4h^\vee$, where $h^\vee$ is the dual Coxeter number of the Lie group $G$ (in particular, $N = 4$ for $SO(3)$-theory, and $N = 8$ for $SU(2)$-theory), cf.~ \cite{KrMr}.
    \end{enumerate}
    In the first two cases, smoothness is guaranteed by Large \cite{Large}, building on exponential decay estimates due to Fukaya-Oh-Ohta-Ono \cite{FOOOexp}. In the latter, the existence of smooth corner structures remains open, although we expect similar exponential decay techniques to work. We should also mention that Instanton theory can be modelled (at least conjecturally) as a singular Lagrangian intersection problem, cf.~ \cite{DF,DFL}, hence potentially allowing one to still use \cite{Large} directly to deal with smoothness.
  \end{EXS}

  \begin{RMK}
    Anticipating the Cohen-Jones-Segal construction of {\sc\S\ref{ssec:steenact}}, the moduli spaces of dimension up to $N-3$ should be thought of as data, whereas the existence of the top ($N-2$)-dimensional ones should be thought of as property. A good analogy is classical Floer homology, in which case the 0-dimensional moduli spaces provide the actual data required to build the chain complex, whereas the 1-dimensional moduli spaces are only required to exist. Because the top-dimensional moduli spaces in {\sc\S\ref{sec:apps}} will be canonically defined as well, we have decided to leave {\sc Def.~\ref{def:truncfl}} as it is.
  \end{RMK}

  In order to set the stage for the Pontrjagyn-Thom construction needed later, we now introduce the notion of embeddings:

  \begin{DEF}[Embeddings]\label{def:emb}
    Given $\scr F$ an $N$-truncated flow category, an \keywd{embedding} of $\scr F$, cf.~ \cite[\sc Def.\,2.14]{me}, is the following data:
    \begin{enumerate}
      \item\label{i:br} For all $i < j$ in $I$, Euclidean spaces $\mathbf E_{i,j}$ with $\{u : i<u<j\}$-corners (i.e.~ of the form $V \times \mathbf R_{\ge 0}^{\{i < u < j\}}$, where $V$ is a vector space), together with stratification-respecting identifications $\mathbf E_{i,u} \times \mathbf R_{\ge 0}^{\{u\}} \times \mathbf E_{u,j} \cong \mathbf E_{i,j}$, that are associative in the most straightforward manner.
      \item For all $i < j$ in $I$, smooth stratification-preserving embeddings $\iota_{i,j} : \scr M_{i,j} \hookrightarrow \mathbf E_{i,j}$ meeting the boundary strata in the Euclidean space transversely, and satisfying the boundary compatibility condition $\iota_{i,j}|_{\partial^u} = \iota_{i,u} \times \{0\} \times \iota_{u,j}$.
    \end{enumerate}
    A \keywd{stabilization} of the embedding consists in adding vector spaces consistently to all the $\mathbf E_{i,j}$, and extending the embedding by the zero map on the newly added summands.
  \end{DEF}

  \begin{PROP}[Existence and homotopy-uniqueness]\label{prop:exun}
    Such an embedding always exists. Moreover, given any continuous (resp. smooth) $\partial D^k$-parameter family of such embeddings, one can extend it to a continuous (resp. smooth) $D^k$-parameter family of embeddings after suitable stabilization.
  \end{PROP}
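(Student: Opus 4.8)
The plan is to build the embedding by induction on the grading difference $d = \mu(i)-\mu(j)$ (so $\dim\scr M_{i,j}=d-1$), proving existence and the $\partial D^k$-extension statement by the same inductive scheme, with the parameter disk carried along throughout — existence being the degenerate case where the $\partial D^0=\0$-family is extended over a point. First I fix a convenient model for the Euclidean spaces: pick an auxiliary vector space $U$, let $W_\ell$ be a copy of $U$ for each $\ell\in\bb Z$, and set $\mathbf E_{i,j}=\big(\bigoplus_{\mu(j)\le\ell<\mu(i)}W_\ell\big)\times\mathbf R_{\ge 0}^{\{u:i<u<j\}}$. The identifications $\mathbf E_{i,u}\times\mathbf R_{\ge 0}^{\{u\}}\times\mathbf E_{u,j}\cong\mathbf E_{i,j}$ are then the evident ones (split the integer interval $[\mu(j),\mu(i))$ at $\mu(u)$, concatenate corner coordinates), and their associativity is immediate. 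For bare existence we take $\dim U$ as large as needed; for the extension statement we begin from the $\mathbf E_{i,j}$ of the given $\partial D^k$-family and enlarge all $W_\ell$ uniformly when necessary, which alters previously constructed embeddings only by padding with zeros — i.e. by a stabilization in the sense of {\sc Def.\,\ref{def:emb}}.

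For the inductive step, suppose embeddings (resp. $\partial D^k$-families already extended over $D^k$) have been constructed for all grading differences $<d$, and fix $i<j$ with $\mu(i)-\mu(j)=d$. The identifications $\scr M_{i,u}\times\scr M_{u,j}\cong\partial^u\scr M_{i,j}$, together with $\iota_{i,u}\times\iota_{u,j}$, the boundary identifications of the $\mathbf E$'s, and the rule $(x,y)\mapsto(x,0,y)$, prescribe a stratification-preserving embedding of each codimension-one face $\partial^u\scr M_{i,j}$ into the corresponding face of $\mathbf E_{i,j}$; the associativity clauses of {\sc Defs.\,\ref{def:truncfl}} and {\sc \ref{def:emb}} force these to agree on the overlaps $\partial^{\{u,u'\}}\scr M_{i,j}$, so they glue to a single stratification-preserving embedding $\partial\scr M_{i,j}\hookrightarrow\partial\mathbf E_{i,j}$ of the full boundary that is transverse to all boundary strata. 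In the parametrized case this is a $D^k$-family which, by compatibility of the given family with its own boundary identifications, restricts over $\partial D^k$ to $\iota_{i,j}|_{\partial\scr M_{i,j}}$. It remains to extend this to a transverse embedding of $\scr M_{i,j}$ itself, resp. to fill the family in over $\scr M_{i,j}\times D^k$ starting from the embedding already defined on the corner ``boundary'' $(\scr M_{i,j}\times\partial D^k)\cup(\partial\scr M_{i,j}\times D^k)$ of $\scr M_{i,j}\times D^k$.

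This is done in two moves. First one chooses a collar of $\partial\scr M_{i,j}$ compatible with the corner structure and pushes the prescribed boundary embedding inward along it, making the corner coordinates $\mathbf R_{\ge 0}^{\{u\}}$ strictly positive off the boundary; this yields a transverse embedding on a neighborhood of $\partial\scr M_{i,j}$. Second, one extends over the interior by a relative general-position argument: pick any smooth extension of the $V$-components to all of $\scr M_{i,j}$, then perturb it rel the collar to an embedding. The size of $\bigoplus_\ell W_\ell$ enters exactly here — it suffices that $\dim\bigoplus_\ell W_\ell$ exceed a bound depending only on $\dim\scr M_{i,j}$ and $k$ (double-point and immersion counts for a $(d-1+k)$-parameter family) — and this is arranged by enlarging $U$ or stabilizing. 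Since $\dim\scr M_{i,j}=\mu(i)-\mu(j)-1<N-1$ is bounded when $N<\infty$ (and when $N=\infty$ one runs the induction dimension by dimension), the stabilization can be chosen uniformly across the infinitely many moduli spaces, completing the induction.

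The main obstacle is the relative, parametrized neat-embedding extension lemma for manifolds with corners used in the last paragraph: one must check that the general-position perturbation can keep the map an embedding that is simultaneously transverse to \emph{every} boundary stratum of the corner Euclidean space and compatible with the collar/product structure near $\partial\scr M_{i,j}$, and that this works in a $D^k$-family rel $\partial D^k$. This is a routine but somewhat delicate upgrade of the classical Whitney embedding and Thom transversality theorems to the manifold-with-corners, parametrized setting (essentially the argument of \cite[\sc Def.\,2.14 ff.]{me}); by contrast, the coherence bookkeeping over overlapping faces $\partial^{\{u,u'\}}$ is purely formal given the associativity hypotheses, and the unboundedness of $I$ is harmless since each $\scr M_{i,j}$ is handled individually with constants depending only on its dimension.
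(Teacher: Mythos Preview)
Your strategy is correct and is essentially the paper's: iterate the relative (parametric) Whitney embedding theorem, building $\iota_{i,j}$ from its already-constructed boundary via collaring plus general position. The paper is much terser --- it defers the finite-$I$ case to \cite[{\sc Prop.\,5.2}]{me} and passes to infinite $I$ by exhausting with finite sub-segments, never writing down an explicit model for the $\mathbf E_{i,j}$ --- whereas you induct directly on the grading difference $d$ and spell out the collar-and-perturb step; these differ only in bookkeeping.

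There is one genuine slip in your concrete model. You set $V_{i,j}=\bigoplus_{\mu(j)\le\ell<\mu(i)}W_\ell$ and claim the identification $V_{i,u}\oplus V_{u,j}\cong V_{i,j}$ comes from splitting the integer interval $[\mu(j),\mu(i))$ at $\mu(u)$. But nothing in {\sc Def.\,\ref{def:truncfl}} forces $\mu$ to be monotone with respect to the order on $I$ --- indeed in the Lagrangian setting of {\sc\S\ref{ssec:lagflow}} the order comes from the action while $\mu$ is the Maslov grading, and these need not move in step. If $i<u<j$ with $\mu(u)\notin[\mu(j),\mu(i)]$ your factorization breaks (e.g.\ when $\mu(u)>\mu(i)$ one has $V_{i,u}=0$ while $V_{u,j}$ strictly contains $V_{i,j}$). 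The fix is immediate: index the summands by the ordered set itself, taking $V_{i,j}=\bigoplus_{v\in I,\,i\le v<j}W_v$, so that the splitting at $u$ is literal. Each such sum is finite since $(I,<)\hookrightarrow(\bb Z,<)$, and the uniformity of your stabilization bound survives because $\dim V_{i,j}\ge\dim U$ while $\dim\scr M_{i,j}<N-1$. With this correction your argument goes through.
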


  \begin{proof}
    In the case that $I$ is finite, this follows easily by applying the relative version of Whitney's embedding theorem over and over again, cf.~ \cite[\sc Prop.\,5.2]{me}. One only needs to ensure that the $\mathbf E_{i,j}$ are all big enough to being with. When $I$ is infinite, one can induct on larger and larger finite sub-segments of $I$, eventually exhausting $I$. Likewise, the parametric version also follows by the parametric relative version of Whitney's embedding theorem instead.
  \end{proof}

  \subsection{\texorpdfstring{$G$}{G}-structures on flow categories}\label{ssec:gstruc} We now discuss a certain extra kind of data, called $G$-structures, which often accompany flow categories arising from various different Floer theories. We begin with the following consequence of {\sc Prop.~\ref{prop:exun}}:

  \begin{RMK}
    From a homotopy-theoretic standpoint, {\sc Prop.~\ref{prop:exun}} implies that an embedding can always be chosen ``for free.'' It also means that the normal bundles $N_{i,j}$, viewed as rank-0 virtual bundles (i.e.~ maps $\scr M_{i,j} \to BO$) are well-defined up to contractible choice. Note that $N_{i,j}|_{\partial^u} \cong N_{i,u} \boxplus N_{u,j}$ as vector bundles, and so when passing to virtual bundles we get an $\bb E_1$-map $\scr M_{*,*} \overset{N_{*,*}} \longrightarrow BO$, with respect to the inclusions {\sc \ref{def:truncfl}-\ref{i:ident}} and the $\bb E_1$-multiplication on $BO$. Since $\scr M_{i,j}$ is only well-defined when $\mu(i) - \mu(j) < N$, it should implicitly be understood that these maps, together with the $\bb E_1$-operad action, are only defined in the appropriate range.
  \end{RMK}

  \begin{DEF}
    Let $G$ denote some $\bb E_2$-group, with an $\bb E_2$-map $G \to O$, inducing an $\bb E_1$-map $BG \to BO$. We define a \keywd{stable normal $G$-structure} on $\scr M_{*,*}$ to be a homotopy-lift
    \[\numeq\label{eq:glift}\begin{tikzcd}
      & BG \dar \\
      \scr M_{*,*} \rar["N_{*,*}"]\ar[ru, dotted] & BO
    \end{tikzcd}\]
    of $\bb E_1$-maps. (Examples include $G = SO, U$ or $1$.)
  \end{DEF}

  \begin{RMK}
    The bundles $\tilde T\scr M_{*,*} := \mathbf R \oplus T\scr M_{*,*}$ also induce an $\bb E_1$-map $\scr M_{*,*} \overset{\tilde T_{*,*}}\longrightarrow BO$. In virtue of the $\bb E_\infty$-structure on $BO$, one can add $\tilde T_{*,*}$ and $N_{*,*}$, to obtain a new $\bb E_1$-map $\tilde T_{*,*} + N_{*,*}$. This is canonically identified with the zero map, since the tangent and the normal bundles add up to the trivial bundle, i.e.~ $\mathbf R \oplus T\mathbf E_{i,j}$. Hence, $N_{*,*} = -\tilde T_{*,*}$.
  \end{RMK}

  \begin{DEF}
    A \keywd{stable tangential $G$-structure} on $\scr M_{*,*}$ is a homotopy lift similar to \ref{eq:glift}, but for $\tilde T_{*,*}$ instead.
  \end{DEF}

  \begin{COR}
    If $G \to O$ is in fact a map of $\bb E_\infty$-groups, then there is a canonical correspondence between stable tangential and normal $G$-structures, given by taking the inverse with respect to the $\bb E_\infty$-group law.
  \end{COR}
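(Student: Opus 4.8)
The plan is to construct an explicit equivalence between the space of stable tangential $G$-structures on $\scr M_{*,*}$ and the space of stable normal $G$-structures, realized by post-composition with the canonical inversion self-map of $BG$. Since $G \to O$ is a map of $\bb E_\infty$-groups, delooping gives an $\bb E_\infty$-map $BG \to BO$ of grouplike $\bb E_\infty$-spaces. Every grouplike $\bb E_\infty$-space $X$ (we use $X = BG$ and $X = BO$) carries a canonical inversion self-map $\chi_X : X \to X$, the additive inverse for the $\bb E_\infty$-group law; this $\chi_X$ is itself an $\bb E_\infty$-map, it comes with a coherent involution datum $\chi_X \circ \chi_X \simeq \mathrm{id}_X$, and the square relating $\chi_{BG}$, $\chi_{BO}$ and $BG \to BO$ commutes coherently (standard facts about grouplike $\bb E_\infty$-spaces, equivalently connective spectra). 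Since the lifts in \ref{eq:glift} are lifts of $\bb E_1$-maps, we only ever need the induced $\bb E_1$-coherence, obtained by forgetting the higher multiplicative structure.

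Recall from the Remark above --- the one establishing $N_{*,*} = -\tilde T_{*,*}$ --- that there is a canonical $\bb E_1$-homotopy $N_{*,*} \simeq \chi_{BO} \circ \tilde T_{*,*}$ of maps $\scr M_{*,*} \to BO$, coming from the splitting of the ambient trivial bundle $\mathbf R \oplus T\mathbf E_{i,j}$. Given a stable tangential $G$-structure, i.e. an $\bb E_1$-lift $\ell : \scr M_{*,*} \to BG$ of $\tilde T_{*,*}$ along $BG \to BO$, put $\Psi(\ell) := \chi_{BG} \circ \ell$; this is an $\bb E_1$-map to $BG$, and by coherence of the square its composite to $BO$ is $\chi_{BO} \circ \tilde T_{*,*}$, hence $\bb E_1$-homotopic to $N_{*,*}$, so that $\Psi(\ell)$ together with this homotopy is a stable normal $G$-structure. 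Symmetrically, for a stable normal $G$-structure $m : \scr M_{*,*} \to BG$ lifting $N_{*,*}$, set $\Phi(m) := \chi_{BG} \circ m$; its composite to $BO$ is $\chi_{BO} \circ N_{*,*} \simeq \chi_{BO} \circ \chi_{BO} \circ \tilde T_{*,*} \simeq \tilde T_{*,*}$, so $\Phi(m)$ is a stable tangential $G$-structure. In both cases the assignment is ``take the inverse with respect to the $\bb E_\infty$-group law,'' as the statement claims.

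It remains to check that $\Psi$ and $\Phi$ are mutually inverse equivalences. Because $\chi_{BG} \circ \chi_{BG} \simeq \mathrm{id}_{BG}$ as $\bb E_1$-maps, the composites $\Phi \circ \Psi$ and $\Psi \circ \Phi$ are homotopic to the identity on the respective spaces of structures. More precisely, viewing the space of tangential (resp. normal) $G$-structures as the homotopy fiber over $\tilde T_{*,*}$ (resp. over $N_{*,*}$) of the restriction map $\mathrm{Map}_{\bb E_1}(\scr M_{*,*}, BG) \to \mathrm{Map}_{\bb E_1}(\scr M_{*,*}, BO)$, post-composition with the self-equivalence $\chi_{BG}$ lies over post-composition with the self-equivalence $\chi_{BO}$, which carries $\tilde T_{*,*}$ to a point canonically identified with $N_{*,*}$; hence it restricts to an equivalence of homotopy fibers. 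Naturality in $\scr F$ and invariance under stabilization (which only enlarges the ambient $\mathbf E_{i,j}$ by trivial summands, leaving $\tilde T_{*,*}$ and $N_{*,*}$ unchanged) are then immediate. The only delicate point --- and the main obstacle --- is the $\bb E_1$-coherence bookkeeping behind the first two paragraphs: one must know that inversion on an infinite loop space is an $\bb E_\infty$- (hence $\bb E_1$-) self-map equipped with a coherent involution structure, and that the identity $N_{*,*} \simeq -\tilde T_{*,*}$ holds as a homotopy of $\bb E_1$-maps rather than merely of underlying maps, so that $\Psi(\ell)$ and $\Phi(m)$ really do land among $\bb E_1$-lifts. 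Granting this, the argument is purely formal.
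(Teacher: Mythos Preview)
Your proof is correct and is precisely the argument the paper has in mind; the paper actually gives no proof at all for this corollary, treating it as an immediate consequence of the preceding remark that $N_{*,*} = -\tilde T_{*,*}$ together with the phrase ``taking the inverse with respect to the $\bb E_\infty$-group law.'' You have simply unpacked that phrase carefully: post-compose with the inversion self-map $\chi_{BG}$, use that $BG \to BO$ intertwines inversions because it is $\bb E_\infty$, and invoke $\chi^2 \simeq {\rm id}$ to see the two spaces of lifts are equivalent as homotopy fibers. The only point worth flagging is your own caveat at the end --- that $N_{*,*} \simeq -\tilde T_{*,*}$ must hold as an $\bb E_1$-homotopy --- but this is exactly what the paper's remark asserts (the identification comes from the canonical splitting $\tilde T \oplus N \cong \mathbf R \oplus T\mathbf E$, which is compatible with the boundary product structure), so there is no gap.
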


  \begin{DEF}\label{def:g-flow}
    If $G \to O$ is an $\bb E_\infty$-group map, then an \keywd{$N$-truncated $G$-flow category} is an $N$-truncated flow category with a stable $G$-structure in either of the two equivalent senses mentioned above.
  \end{DEF}

  \begin{RMK}\label{rmk:index}
    One may wonder how such a $G$-structure would arise in practice. More will be said in {\sc\S\ref{sec:apps}}, but for now we remark that according to Porcelli-Smith \cite{PoSm2}, in Lagrangian Floer theory (and very likely in other Floer theories as well), $\tilde T_{i,j}$ can be identified with the index bundle of the linearized Fredholm operator at the given solutions. Since $\scr{Fred}_{\bb R}$, the space of Fredholm operators on a separable infinite-dimensional real Hilbert space, is a classifying space for $KO$-theory, and this can be used to endow Lagrangian flow categories with interesting $G$-structures via index theory.
  \end{RMK}

  \subsection{Orientations in ring spectra, and Pontrjagyn-Thom maps}\label{ssec:orpt} Let $R$ be an $\bb E_1$-ring spectrum. Since $\bb E_1$-algebras can always be made strictly unital and associative, one can always model such a ring spectrum as an orthogonal ring spectrum \cite{Sch}, which we will have to do in {\sc\S\ref{ssec:act}}.

  \begin{EXP}
    If $\rho : G \to O$ is an $\bb E_2$-group map, then $B\rho : BG \to BO$ is an $\mathbb E_1$-group map, and hence the \keywd{$G$-Thom spectrum} $MG := {\rm Th}((B\rho)^*(\gamma))$ of the pull-back of the universal bundle $\gamma$ on $BO$ via $B\rho$, is an $\mathbb E_1$-ring spectrum. The original reference is Mahowald \cite{Mah}, cf.~ also May \cite[\sc\S IV.3]{May06}. In modern language, this follows from the fact that ${\rm Th}(-)$ is a symmetric monoidal $\infty$-functor with respect to external sums of virtual bundles and smash products of spectra.
  \end{EXP}

  The goal of this subsection is to produce, given enough data on the flow category, Pontrjagyn-Thom maps in $R$-modules, which will serve as building blocks for the Cohen-Jones-Segal construction later. If $R = MG$, then the data of a $G$-structure on the flow category should suffice to produce these $R$-maps, although it's convenient to work with more general $R$, as will be apparent in {\sc\S\ref{ssec:steenact}}, when we use Postnikov truncations.

  \begin{DEF}[$R$-oriented flow categories]\label{def:r-or-flow}
    Ler $\scr F$ be an $N$-truncated flow category. The Thom spectra ${\rm Th}(N_{*,*})$, where the virtual bundles $N_{i,j}$ are now remembered with their rank (i.e.~ as maps to $\bb Z \times BO$), form an $\bb E_1$-algebra. We also have an $\bb E_1$-algebra $R_{*,*}$ defined by $R_{i,j} := \Sigma^{{\rm rk\;}N_{i,j}} R$, and multiplication induced from the $\bb E_1$-structure on $R$. We define an \keywd{$R$-orientation} of $\scr F$ to be an $\bb E_1$-algebra map in spectra ${\rm Th}(N_{*,*}) \to R_{*,*}$, with the requirement that each individual map ${\rm Th}(N_{i,j}) \to R_{i,j}$ is an $R$-Thom class for $N_{i,j}$.
  \end{DEF}

  \begin{DEF}[Pontrjagyn-Thom collapse maps]\label{def:pt}
    Given an $R$-oriented $N$-truncated flow category $\scr F$, we define collapse maps
    \[ {\rm PT}_{i,j} : \mathbf E_{i,j}^+ \to \mathbf E_{i,j}/(\mathbf E_{i,j} - \scr M_{i,j}) \cong \bb {\rm Th}(N_{i,j}) \to R_{i,j}. \]
    Since the $\mathbf E_{i,j}^+$ form an $\bb E_1$-algebra with respect to the smash product (again, we only really consider this for $\mu(i) - \mu(j) < N$), via the inclusions 
    \[ \mathbf E_{i,u} \times \mathbf E_{u,j} = \mathbf E_{i,u} \times \{0\} \times \mathbf E_{u,j} \hookrightarrow \mathbf E_{i,j} \]
    (cf.~ {\sc Def.~\ref{def:emb}-\ref{i:br}}) we get an $\bb E_1$-map ${\rm PT}_{*,*} : \mathbf E_{*,*}^+ \to R_{*,*}$, the \keywd{Pontrjagyn-Thom} collapse map.
  \end{DEF}

  \begin{RMK}
    If $R = MG$, and $\scr F$ is endowed with a $G$-structure, then we automatically get $MG$-orientations, and hence $MG$-Pontrjagyn-Thom collapse maps. This is because the classifying map of $N_{i,j}$ factors through $BG$, leading to a Thomified map ${\rm Th}(N_{i,j}) \to \Sigma^{{\rm rk\;} N_{i,j}}MG$. We also get the same for any ring spectrum $R$ which is an algebra over $MG$.
  \end{RMK}

  \subsection{Steenrod action and generalized Floer (co)homology}\label{ssec:steenact} Let $R$ be now a \emph{connective} $\bb E_1$-ring spectrum, with $k := \pi_0 R$. We can associate (co)chain complexes to $N$-truncated $R$-oriented flow categories:

  \begin{DEF}[Floer (co)homology]\label{def:hf}
    Given a right (left) $k$-module $M$, and an $N$-truncated $R$-oriented flow category $\scr F$ (with $N \ge 3$), we define its associated \keywd{Floer (co)chain complexes} ${\rm CF}_*(\scr F; M)$ and ${\rm CF}^*(\scr F; M)$ by the following construction. We let ${\rm CF}_n(\scr F; k)$ be the free left $k$-module generated by $\mu^{-1}(n)$ (which was assumed to be finite, cf.~ {\sc Def.~\ref{def:truncfl}}), endowed with differential $d : {\rm CF}_n \to {\rm CF}_{n-1}$ given by
    \[ d(p) = \sum_{\mu(q) = \mu(p)-1} \#_R \scr M_{p,q} \cdot q,\]
    where $\#_R$ of a compact $R$-oriented 0-manifold $X$ is the integral of the $R$-Thom class over $X$. One has $d^2 = 0$ because $\bigsqcup_q \scr M_{p,q} \times \scr M_{q,r}$ for $\mu(r) = \mu(q) - 1 = \mu(p) - 2$ bounds the compact $R$-oriented 1-manifold $\scr M_{p,r}$. Then, one defines
    \[ {\rm CF}_*(\scr F; M) := M \otimes_k {\rm CF}_*(\scr F; k), \qquad {\rm CF}^*(\scr F; M) := {\rm Hom}_k({\rm CF}_*(\scr F; k), M), \]
    as usual. The Floer (co)homology is denoted by ${\rm HF}$ instead of ${\rm CF}$.
  \end{DEF}

  \begin{DEF}[Steenrod algebra]\label{def:steen}
    Since $R$ was assumed to be connective, there is an $\bb E_1$-ring map $R \to \tau_{\le 0}R = Hk$, so that a left $k$-module $M$ induces a left $\bb E_1$-module spectrum $HM$ over $R$. We define the \keywd{$M$-Steenrod algebra over $R$}, as
    \[ \scr A_M^*(R) := \pi_{-*}{\rm RHom}_R(HM, HM). \]
    This pedantic notation is used to clarify that we are taking homotopy-classes of maps $HM \to HM[n]$ in the derived category of $R$-modules. This is an algebra in the obvious way via composition, and it more generally acts on \keywd{$M$-coefficient cohomology over $R$}
    \[ H^*_R(X; M) := \pi_{-*}{\rm RHom}_R(X, HM) \]
    for any $R$-module $X$.
  \end{DEF}

  \begin{NB}[Terminology clash]
    We emphasize here that our definition of $H^*_R(X; M)$ given above does not agree with the cohomology $H^*(X; M) := [X, \Sigma^* HM]$ taken in the category of $\bb S$-modules. Indeed, the former parameterizes \emph{$R$-module} maps, so it takes the $R$-action on both $X$ and $HM$ into account.

    Also, another unfortunate confusion arises from the other reasonable notion of an $R$-Steenrod algebra, namely $R^*R$. We hope that the extra $M$ in the name of the ``$M$-Steenrod algebra over $R$'' will disambiguate this. The notion $R^*R$ will never be used in our paper.
  \end{NB}

  \begin{EXP}[Milnor primitives]\label{exp:miln}
    A great class of operations on the $H\bb F_p$-cohomology of any $MU$-module come from \keywd{Milnor primitives}. Indeed, recall that there are primitive elements $Q_i^{(p)}$ in the classical mod-$p$ Steenrod algebra, of degree $2p^i - 1$, defined recursively by
    \[ Q_0 := b, \qquad Q_{i+1} = \scr P^{p^i} Q_{i} - Q_{i} \scr P^{p^i}, \]
    where $b$ is the Bockstein homomorphism, and $\scr P^n$ is the Steenrod $n^{\rm th}$ power operation, with the convention that $\scr P^n = {\rm Sq}^{2n}$ at $p = 2$. We often omit the superscript $(p)$ if it is clear from the context. See \cite{Milnor} for the original reference. These satisfy the following elementary relations in the Steenrod algebra:
    \begin{enumerate}
      \item Exterior algebra: $Q_i^2 = 0$ and $Q_iQ_j + Q_j Q_i = 0$;
      \item Leibniz rule: $Q_i(x \otimes y) = Q_ix \otimes y + (-1)^{|x|} x \otimes Q_i y$;
      \item Power rule: $Q^{(2)}_i(x) = x^{2^{i+1}}$ for all classes $x \in H^1(X;\bb F_2)$ on an actual space $X$, and similarly $Q^{(p)}_i(x) = (bx)^{p^i}$ for all $x \in H^1(X;\bb F_p)$ at odd primes $p$.
    \end{enumerate}
    The relevance of these Milnor primitives is that they can be upgraded to Steenrod operations over $MU$, and even over $\tau_{\le r} MU$ if $i$ is sufficiently small, which we now review:
  \end{EXP}

  \begin{DEF}[Postnikov truncations]\label{def:post}
    We use the notation $\tau_{\le r} X$ to denote the $r$-th \keywd{Postnikov truncation} of a spectrum $X$, i.e.~ the spectrum characterized by the existence of a map $X \to \tau_{\le r} X$, which induces an isomorphism on $\pi_{* \le r}$, and such that $\pi_k (\tau_{\le r} X) = 0$ for $k > r$. It turns out this can also be done in the category of $\mathbb E_1$ (or $\mathbb E_\infty$) connective ring spectra, and that the forgetful functor to spectra commutes with $\tau_{\le r}$. This can be seen either by using the small object argument in ring spectra, or by using the fact that $\tau_{\le r}$ is lax (symmetric) monoidal with respect to smash products, as long as one restricts to connective ring spectra. For some references, see \cite[\sc\S8]{Laz}, \cite{Dug06}, \cite[\sc\S8]{Bas}, \cite[\sc\S7.4]{HA}.
  \end{DEF}

  \begin{PROP}[Milnor $Q_i$ over $\tau_{\le r}MU$]\label{prop:qimu}
    The operations $Q_i \in \scr A^*_{\bb F_p}(\bb S)$ admit canonical lifts to $Q_i \in \scr A^*_{\bb F_p}(MU)$. Moreover, there exist lifts $Q_i \in \scr A^*_{\bb F_p}(\tau_{\le r} MU)$, at least when $r \ge 2p^i - 2$.
  \end{PROP}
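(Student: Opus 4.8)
The plan is to extract the operations $Q_i$ from the structure of $MU$ as the Thom spectrum of $BU$, together with the homotopy of the Postnikov tower. First recall why the $Q_i$ lift to $\scr A^*_{\bb F_p}(MU)$ at all: the ring map $MU \to H\bb Z \to H\bb F_p$ exhibits $H\bb F_p$ as an $MU$-algebra, and one has the classical computation $H\bb F_p \wedge_{MU} H\bb F_p \simeq H\bb F_p[\overline\xi_1,\overline\xi_2,\dots]$ (polynomial on the Milnor generators, no exterior $\tau_i$'s), so that the dual $MU$-Steenrod algebra $\pi_*\mathrm{RHom}_{MU}(H\bb F_p,H\bb F_p)$ is precisely the exterior part of the classical dual Steenrod algebra, generated by the classes that reduce to $Q_0,Q_1,\dots$. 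This is where the $Q_i\in\scr A^*_{\bb F_p}(MU)$ come from, and their images in $\scr A^*_{\bb F_p}(\bb S)$ are the Milnor primitives by naturality along $\bb S\to MU$. I would cite \cite{Milnor, Yag} for the dual-Steenrod computation and just record the conclusion.

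For the truncated statement, I would use the map of $\bb E_1$-rings $MU\to\tau_{\le r}MU$ and the induced forgetful/restriction of scalars $D(\tau_{\le r}MU)\to D(MU)$ on derived categories. The key point is that $H\bb F_p$, viewed as a $\tau_{\le r}MU$-module via $\tau_{\le r}MU\to H\bb Z\to H\bb F_p$ (which exists whenever $r\ge 0$), restricts along $MU\to\tau_{\le r}MU$ to the usual $MU$-module $H\bb F_p$; hence there is a natural ring map
\[ \scr A^*_{\bb F_p}(\tau_{\le r}MU)=\pi_{-*}\mathrm{RHom}_{\tau_{\le r}MU}(H\bb F_p,H\bb F_p)\longrightarrow \pi_{-*}\mathrm{RHom}_{MU}(H\bb F_p,H\bb F_p)=\scr A^*_{\bb F_p}(MU). \]
So it suffices to show that the class $Q_i$ of degree $2p^i-1$ on the right is in the image of this map, for $r\ge 2p^i-2$. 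For this I would run the descent/base-change spectral sequence (or the relative Adams spectral sequence for the ring map $MU\to\tau_{\le r}MU$) computing $\mathrm{RHom}_{\tau_{\le r}MU}(H\bb F_p,H\bb F_p)$ from $\mathrm{RHom}_{MU}(H\bb F_p,H\bb F_p)$: concretely, $\tau_{\le r}MU\wedge_{MU}H\bb F_p$ differs from $H\bb F_p$ only in homotopy degrees $>r$, so the two $\mathrm{RHom}$ spectra agree through a range of degrees controlled by $r$. Since $Q_i$ has degree $2p^i-1\le r+1$, I expect it to survive and lift once $r\ge 2p^i-2$; the honest bookkeeping is to check that the first obstruction to lifting lives in a group that vanishes in this range. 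Alternatively, and perhaps more cleanly, one can build the lift geometrically: the operation $Q_i$ over $MU$ is represented by an explicit $MU$-module map built from the bottom cells of the cofiber of $p$ (for $Q_0$) and iterated constructions involving $BP\langle i\rangle$ or the generators $v_i$; tracking through which Postnikov stage of $MU$ is needed to define these cells gives exactly the bound $2p^i-2$.

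The main obstacle will be the precise range in the base-change spectral sequence — i.e. verifying that no differential or extension problem obstructs the lift of the degree-$(2p^i-1)$ class when $r=2p^i-2$ (rather than needing $r$ somewhat larger). I would handle this by being careful that the relevant $\mathrm{Ext}$-obstruction group is concentrated in a strictly larger internal degree than $2p^i-1$, using connectivity of the fiber of $MU\to\tau_{\le r}MU$ (which is $(r+1)$-connected, so its effect on $H\bb F_p$-homology of the relevant bar complex starts in degree $r+2=2p^i$). A secondary, more bookkeeping-heavy point is to confirm that the lift is \emph{canonical}, for which I would note that the lifting space (the homotopy fiber of the restriction map in the appropriate range) is nonempty and, by the same connectivity estimate, has vanishing higher homotopy in the relevant degree, so the lift is unique up to a contractible choice. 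This also automatically guarantees compatibility with the maps $\tau_{\le r'}MU\to\tau_{\le r}MU$ for $r'\ge r$, hence with the $MU$-level $Q_i$, completing the proof.
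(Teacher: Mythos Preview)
Your argument has a concrete error and a genuine gap. The claimed computation $\pi_*(H\bb F_p \wedge_{MU} H\bb F_p) \simeq \bb F_p[\overline\xi_1,\overline\xi_2,\dots]$ is not correct: by the K\"unneth spectral sequence one has $\pi_*(H\bb F_p \wedge_{MU} H\bb F_p) \cong \mathrm{Tor}^{MU_*}_*(\bb F_p,\bb F_p)$, and since $MU_* = \bb Z[x_1,x_2,\dots]$ with $(p,x_1,x_2,\dots)$ a regular sequence, this Tor is an \emph{exterior} algebra over $\bb F_p$ on classes in every positive odd degree, not a polynomial algebra on the $\xi_i$. (You may be conflating this with the analogous computation over $BP$, or with the quotient Hopf algebra $A_*/\!/\mathrm{im}\,H_*(MU)$.) The conclusion that $Q_i$ lifts to $\scr A^*_{\bb F_p}(MU)$ is still true, but your justification for it is wrong.

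For the truncated statement, your connectivity sketch does not close. Writing $B=\tau_{\le r}MU$, the forgetful map identifies with $\mathrm{RHom}_B(H\bb F_p,H\bb F_p)\to\mathrm{RHom}_B(B\otimes_{MU}H\bb F_p,H\bb F_p)$, and the obstruction to lifting $Q_i$ lives in $[F,\Sigma^{2p^i-1}H\bb F_p]_B$ where $F$ is the $(r+2)$-connective fiber of $B\otimes_{MU}H\bb F_p\to H\bb F_p$. But $(r+2)-(2p^i-1)\ge 1$ only forces this group into the range governed by $\scr A^{\ge 1}_{\bb F_p}(B)$, which is certainly nonzero (it contains the Bockstein). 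So connectivity alone does not kill the obstruction at the sharp value $r=2p^i-2$; you would need a further argument specific to $Q_i$, and you do not supply one.

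The paper takes a completely different, and much shorter, route: it realises $Q_i$ directly as the first $k$-invariant of connective Morava $k$-theory $k^{(p)}(i)$, an $MU$-algebra with $\pi_* k^{(p)}(i)=\bb F_p[v_i]$, $|v_i|=2p^i-2$. The connecting map of the triangle $\Sigma^{2p^i-2}H\bb F_p\to\tau_{\le 2p^i-2}k^{(p)}(i)\to H\bb F_p\overset{\partial}\to\Sigma^{2p^i-1}H\bb F_p$ is $Q_i$ up to a unit; since the triangle lives in $MU$-modules, $\partial$ is $MU$-linear, giving the lift to $\scr A^*_{\bb F_p}(MU)$. For the truncation, one simply observes that $\tau_{\le r}k^{(p)}(i)$ is a $\tau_{\le r}MU$-module once $r\ge 2p^i-2$, and the same triangle, now in $\tau_{\le r}MU$-modules, still has connecting map $Q_i$. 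Your passing remark about ``constructions involving $BP\langle i\rangle$ or the generators $v_i$'' gestures toward this idea, but the actual argument is the Morava $k$-theory $k$-invariant, and it bypasses the obstruction-theoretic comparison entirely.
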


  \begin{proof}
    Recall that (connective) \emph{Morava k-theories} $k^{(p)}(n)$ are algebras over $MU$ satisfying
    \[ \pi_*k^{(p)}(n) = \bb F_p[v_n], \qquad |v_n| = 2p^n - 2. \]
    The first k-invariant of this spectrum, i.e.~ the connecting homomorphism $\partial$ associated to the exact triangle
    \[ H\bb F_p[2p^n - 2] \to \tau_{\le 2p^n - 2} k^{(p)}(n) \to H\bb F_p \overset\partial\to H\bb F_p[2p^n - 1], \] 
    is equal to $Q_n$, up to a potential unit in $\bb F_p^\times$. For a reference to this, see the original \cite[\sc Lem.\,2.1]{Yag} for general $p$, and \cite{KL24} for $p = 2$. Now, this construction can be done entirely in the category of $MU$-modules, which proves that $Q_i$ in fact lift to $MU$-operations.
    
    If $i$ is sufficiently small, these $Q_i$ in fact lift all the way to the Steenrod algebra over the truncation $\tau_{\le r} MU$. Indeed, as long as $r \ge 2p^i - 2$, the truncation $\tau_{\le r} k^{(p)}(i)$ is a module over $\tau_{\le r} MU$, and it still has the same non-trivial k-invariant, allowing one to repeat the process in the category of $\tau_{\le r} MU$-modules.
  \end{proof}

  \begin{RMK}
    We conjecture and are quite confident that the $Q_i$'s are the only ordinary cohomology operations that lift to (indecomposable) cohomology operations over Postnikov truncations of $MU$, but we do not treat this subject here, as it would be too great of a digression. There are also plenty of non-zero cohomology operations over $\tau_{\le r} MU$ that die even in the next $\tau_{\le r + 2} MU$, but this is a great subject for another paper.
  \end{RMK}

  We now finally state our main results, which we prove in the following subsection.

  \begin{THM}[Steenrod action on Floer (co)homology]\label{thm:act}
    Let $R$ be a connective $\bb E_1$-ring spectrum, with $k := \pi_0 R$, and $M$ be a left module over $k$. An $R$-oriented $N$-truncated flow category $\scr F$ naturally gives rise to a graded action
    \[\scr A^*_{M}(\tau_{\le N-3} R) \;\circlearrowleft\; {\rm HF}^*(\scr F; M),\]
    where $\tau_{\le N-3}R$ is the Postnikov truncation of $R$, cf.~ {\sc(Def.~\ref{def:post})} above.
  \end{THM}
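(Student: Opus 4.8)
The plan is to realize the truncated flow category as a "pro-spectrum" in $\tau_{\le N-3}R$-modules via a Cohen--Jones--Segal type construction, and then observe that the Floer cochain complex is computed by maps from this pro-object into an Eilenberg--MacLane $\tau_{\le N-3}R$-module, which is exactly what the Steenrod algebra acts on. First I would assemble, from the Pontrjagyn--Thom collapse maps $\mathrm{PT}_{i,j} : \mathbf E_{i,j}^+ \to R_{i,j}$ of {\sc Def.\,\ref{def:pt}}, the usual CJS diagram: for each finite sub-segment $[a,b] \subset I$ one builds a finite CW-spectrum (or rather a finite filtered object) $X_{[a,b]}$ whose attaching data record the moduli spaces $\scr M_{i,j}$ with $i,j \in [a,b]$, with the caveat that $\scr M_{i,j}$ is only defined when $\mu(i)-\mu(j) < N$. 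Over $\bb S$ this obstruction is fatal, but over $\tau_{\le N-3}R$ it is not: a $\tau_{\le N-3}R$-module has homotopy concentrated in degrees $\le N-3$, so the cells of dimension $\ge N-2$ — precisely those built from the missing top-dimensional $\scr M_{i,j}$ — carry no information in the relevant mapping spectra, and the attaching maps involving them can be filled in (uniquely up to contractible choice) because the relevant obstruction groups vanish. This is the technical heart of the argument and I expect it to be the main obstacle: one must check that the associativity of {\sc Def.\,\ref{def:truncfl}-\ref{i:ident}}, which is only coherent in the range $\mu(i)-\mu(j) < N$, suffices to make the CJS recursion go through in the category of $\tau_{\le N-3}R$-modules, and that the resulting pro-system (indexed by finite sub-segments of $I$, with structure maps from quotienting/including) is well-defined. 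Here one leans on {\sc Prop.\,\ref{prop:exun}} for the existence and homotopy-uniqueness of embeddings, and on {\sc Def.\,\ref{def:post}} for the fact that $\tau_{\le N-3}$ is lax monoidal on connective ring spectra, so that $\tau_{\le N-3}R$-modules form a reasonable stable category.

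Next I would identify the Floer cochain complex inside this picture. Applying $\mathrm{RHom}_{\tau_{\le N-3}R}(-, HM)$ to the pro-spectrum, and using that $R_{i,j} = \Sigma^{\mathrm{rk}\,N_{i,j}}R$ maps to $\Sigma^{\mathrm{rk}\,N_{i,j}}\tau_{\le N-3}R$ whose relevant cohomology in the degree of a generator $\mu^{-1}(n)$ is just a shift of $M$, the associated spectral sequence of the filtration has $E_1$-page equal to $\mathrm{CF}^*(\scr F; M)$ with $d_1$ the differential of {\sc Def.\,\ref{def:hf}} (the $\#_R$-counts are literally the degree-zero part of the Thom classes $\mathrm{PT}_{i,j}$). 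Because $HM$ is a $\tau_{\le N-3}R$-module, $\scr A^*_M(\tau_{\le N-3}R) = \pi_{-*}\mathrm{RHom}_{\tau_{\le N-3}R}(HM,HM)$ acts on $\mathrm{RHom}_{\tau_{\le N-3}R}(X, HM)$ for any such $X$, compatibly with the filtration, hence on the whole spectral sequence and in particular on its abutment-adjacent object. The point is then that the action descends to $E_2 = \mathrm{HF}^*(\scr F; M)$: postcomposition with a fixed map $HM \to HM[k]$ is a chain map for $d_1$, so it induces a map on cohomology, and this is the claimed action.

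Finally I would record functoriality: a map of $R$-oriented truncated flow categories induces a map of pro-spectra over $\tau_{\le N-3}R$, hence an equivariant map on $\mathrm{HF}^*$, and the algebra structure on $\scr A^*_M(\tau_{\le N-3}R)$ (composition) is respected because it is respected before passing to cohomology. The one subtlety worth flagging in the write-up is that we are working with pro-objects and the limit is only taken after applying $\mathrm{RHom}(-,HM)$, which turns the pro-system into an ind-system of cochain complexes whose colimit is $\mathrm{CF}^*(\scr F;M)$; since each finite stage is a perfect $\tau_{\le N-3}R$-module, the action is visibly compatible with these colimits, so no convergence issue arises. I do not expect to need any finiteness hypothesis here — that enters only for {\sc Thm.\,\ref{thm:gcoh}}, where one wants an actual (pro-)spectrum-level generalized cohomology rather than just the mapping complex into $HM$.
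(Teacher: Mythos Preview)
Your proposal is correct and follows essentially the same route as the paper's own argument in {\sc\S\ref{ssec:act}}: build the CJS spectrum over $\tau_{\le N-3}R$ on finite sub-segments of $I$, fill in the missing Pontrjagyn--Thom maps $\mathrm{PT}_{i,j}$ for $\mu(i)-\mu(j) \ge N$ by obstruction theory (the paper isolates this as {\sc Prop.\,1.16} and its corollaries, computing the obstruction to lie in $\pi_{\mu(i)-\mu(j)-2}R$, which vanishes after truncation), identify $H^*_R$ of the result with $\mathrm{HF}^*$ ({\sc Prop.\,\ref{prop:rechf}}), and let $\scr A^*_M(\tau_{\le N-3}R)$ act by postcomposition. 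The only cosmetic difference is that where you speak of pro-objects and ind-colimits of cochain complexes, the paper phrases the same stabilization concretely as a finite-range approximation ({\sc Prop.\,\ref{prop:fra}}): in any fixed total degree, enlarging the segment $[i:j]$ eventually stops changing the cohomology, and the Steenrod operation in that degree stabilizes along with it.
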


  \begin{RMK}
    When $N = 3$, an $N$-truncated flow category does not carry any more information than is already needed to construct the Floer (co)chain complex. This is confirmed by our result, since $\tau_{\le N-3} R = Hk$, and so the Steenrod algebra is just ${\rm Ext}^*_k(M,M)$, which acts in the obvious, purely chain-level fashion, on ${\rm HF}^*(\scr F; M)$. Thus, $N \ge 4$ is necessary in order to get interesting results that go beyond mere homological algebra.
  \end{RMK}

  Even though we will not use the following notions in our application section {\sc\S\ref{sec:apps}}, nevertheless they are still important consequences of our general theory.

  \begin{DEF}[Finiteness property]\label{def:fin}
    If $N < \infty$, a $\tau_{\le N-3} R$-module $Z$ is said to have \keywd{finite homotopy-support} if $\pi_n Z$ is zero except for finitely many $n$.
  \end{DEF}

  \begin{THM}[Generalized Floer (co)homology theories]\label{thm:gcoh}
    Let $R$ be a connective $\bb E_1$-ring spectrum, and $Z$ a right (left) module over $\tau_{\le N-3}R$. An $R$-oriented $N$-truncated flow category $\scr F$ admits well-defined \keywd{generalized $Z$-(co)homology} theories
    \[ Z_*(\scr F) = {\rm HF}_*(\scr F; Z), \quad \text{or} \quad Z^*(\scr F) = {\rm HF}^*(\scr F; Z), \]
    at least when $Z$ has finite homotopy-support.
  \end{THM}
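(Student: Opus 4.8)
The plan is to follow the Cohen–Jones–Segal recipe, but built in the category of $\tau_{\le N-3}R$-modules and truncated at the homotopical depth that the flow category can actually support. Recall from {\sc Def.\,\ref{def:pt}} that the $R$-orientation produces an $\bb E_1$-map of orthogonal ring spectra ${\rm PT}_{*,*} : \mathbf E_{*,*}^+ \to R_{*,*}$. Composing with the truncation $R \to \tau_{\le N-3}R$ (valid as an $\bb E_1$-ring map by {\sc Def.\,\ref{def:post}}), we obtain structure maps valued in $\tau_{\le N-3}R$-modules. First I would assemble, for each finite sub-segment $[a,b]\subset I$, a filtered spectrum (a ``Cohen–Jones–Segal realization'') $X_{[a,b]}$, whose associated graded pieces are the suspended Thom-collapsed Euclidean spaces $\mathbf E_{i,j}^+$ and whose attaching data is governed by ${\rm PT}_{*,*}$; the $\bb E_1$-coherence of {\sc Def.\,\ref{def:emb}-\ref{i:br}} is exactly what makes these attachments associative. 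This realization is a $\tau_{\le N-3}R$-module because every cell-attaching map factors through $\tau_{\le N-3}R$.

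Second, I would verify that the cellular (co)chain complex of $X_{[a,b]}$ computes ${\rm CF}^*$ of the sub-segment: the cellular boundary is, by construction of ${\rm PT}_{*,*}$ and the identification of $\#_R$ with integration of the Thom class, precisely the Floer differential of {\sc Def.\,\ref{def:hf}}. Hence for any left $k$-module $M$, regarded as a $\tau_{\le N-3}R$-module via the further truncation $\tau_{\le N-3}R\to Hk$ and the Eilenberg–MacLane functor, one gets a spectral sequence (the cellular filtration) converging to $H^*_{\tau_{\le N-3}R}(X_{[a,b]};M)$ with $E_1$-page $\bigoplus \pi_* \mathrm{RHom}(\Sigma^{?} \tau_{\le N-3}R, HM)$ and $E_2$-page ${\rm HF}^*$ of the segment — but more relevantly, the endomorphism action of $\scr A^*_M(\tau_{\le N-3}R)=\pi_{-*}\mathrm{RHom}_{\tau_{\le N-3}R}(HM,HM)$ on $H^*_{\tau_{\le N-3}R}(X_{[a,b]};M)$ acts compatibly with this filtration, inducing an action on the $E_2=E_\infty$-page in the degenerate case, and in general an action on the abutment which restricts to the ${\rm HF}^*$-level. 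I would then observe that truncating at level $N-3$ is forced: the only $\mathbf E_{i,j}^+$ that appear in $X_{[a,b]}$ correspond to $\scr M_{i,j}$ of dimension $\le N-3$ used as genuine data, with the $(N-2)$-dimensional ones entering only to guarantee that the attaching maps compose to zero at the chain level (the $d^2=0$ bookkeeping), so no higher homotopical input is available — which is precisely why one cannot work over $R$ itself but only over its $(N-3)$-truncation.

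Third, for the \emph{pro}-spectrum statement and the well-definedness claim (the ``naturally'' in the theorem), I would take the inverse system $\{X_{[a,b]}\}$ over the poset of finite sub-segments, with structure maps the collapse maps induced by deleting generators at the ends — or, following \cite{me}, passing to the appropriate Spanier–Whitehead duals so that the system is directed the correct way. The homotopy-uniqueness of embeddings ({\sc Prop.\,\ref{prop:exun}}), together with the fact that stabilization only shifts everything by a suspension and a Thom class, shows the pro-object is independent of the auxiliary embedding up to the relevant notion of equivalence; the $\bb E_1$-compatibility of the whole construction in the indices packages this coherently. The action of $\scr A^*_M(\tau_{\le N-3}R)$ on $\varprojlim$-cohomology (equivalently $\varinjlim$ on the Spanier–Whitehead dual side) then descends to the Floer cohomology, which by the spectral sequence above is the $E_2$-page common to all segments large enough to contain the relevant generators.

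For {\sc Thm.\,\ref{thm:gcoh}}: given $Z$ with finite homotopy-support, the key point is that $\mathrm{RHom}_{\tau_{\le N-3}R}(-,HZ)$-type functors (or the corresponding smash with $Z$) applied to the pro-spectrum $\{X_{[a,b]}\}$ produce a \emph{stable} answer, i.e. the inverse/direct system stabilizes in each degree. This is because $Z$ having $\pi_n Z \ne 0$ only for $n$ in a finite window means that $Z^*$ of the segment $X_{[a,b]}$ only sees a bounded band of the cellular filtration near any fixed total degree, and adding more generators at the ends of the segment changes this band by contributions in degrees outside the window — so for $[a,b]$ sufficiently large (depending on the degree), $Z^*(\scr F):=Z^*(X_{[a,b]})$ is independent of the choice. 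One then defines $Z_*(\scr F)$ and $Z^*(\scr F)$ by this eventual value, and functoriality in $Z$ and independence of embedding follow as before. \textbf{The main obstacle} I anticipate is not the construction of $X_{[a,b]}$ per se — that is a routine, if lengthy, adaptation of \cite{CJS,me} — but rather checking carefully that the truncation level $N-3$ is \emph{exactly} what the coherence data supports: one must confirm that the $\bb E_1$-diagram ${\rm PT}_{*,*}$, when composed to $\tau_{\le N-3}R$, still has all the coherence homotopies it needs for the attaching maps to be associative up to the relevant level, given that $\scr M_{i,j}$ with $\mu(i)-\mu(j)=N-1$ or $N$ are simply absent from {\sc Def.\,\ref{def:truncfl}}. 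In other words, the bookkeeping that the ``missing'' top moduli spaces would have provided must be recoverable purely from the surviving lower ones after truncating $R$, and pinning down that this works without a single off-by-one error is the delicate part.
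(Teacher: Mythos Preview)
Your overall architecture---build ${\rm CJS}(\scr F[a:b];\tau_{\le N-3}R)$ on finite segments, then use finite homotopy-support of $Z$ to argue that $Z^*$ and $Z_*$ stabilize as the segment grows---is correct and matches the paper's approach. The stabilization argument for $Z$-coefficients in your fourth paragraph is essentially the paper's proof: adding a generator at either end of the segment attaches a shifted copy of $\tau_{\le N-3}R$, and since $Z^*(\tau_{\le N-3}R)$ vanishes outside a finite band, the (co)homology in any fixed degree is eventually constant.

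However, you have misidentified the delicate point, and your proposed CJS construction does not actually work as stated. The CJS quotient
\[
{\rm CJS}^u = \coprod_{i} R_{\min I,i}\wedge \mathbf F_{i,\max I}^+ \big/ \sim
\]
requires a Pontrjagyn--Thom map ${\rm PT}_{i,j}$ for \emph{every} pair $i<j$ in the segment, not just those with $\mu(i)-\mu(j)<N$. Your sentence ``the only $\mathbf E_{i,j}^+$ that appear in $X_{[a,b]}$ correspond to $\scr M_{i,j}$ of dimension $\le N-3$'' is wrong: the Euclidean corners $\mathbf E_{i,j}^+$ exist for all pairs regardless of whether $\scr M_{i,j}$ does, and the equivalence relation $\sim$ needs \emph{all} of the ${\rm PT}_{i,j}$. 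So you cannot simply omit the large-gap data; if you take only the transitive closure of the partially defined relation, the resulting object has the wrong cohomology (in particular {\sc Prop.\,\ref{prop:rechf}} fails).

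The paper's resolution is not a coherence check on existing data but an \emph{obstruction-theory argument that manufactures the missing data}. One attempts to extend ${\rm PT}_{i,j} : \mathbf E_{i,j}^+ \to R_{i,j}$ inductively from its boundary strata (which are already defined by smaller gaps). Since $\mathbf E_{i,j}^+$ is obtained from $\partial\mathbf E_{i,j}^+$ by attaching a single cell of dimension $\dim\mathbf E_{i,j}$, the obstruction lies in $\pi_{\dim\mathbf E_{i,j}-1}R_{i,j} = \pi_{\mu(i)-\mu(j)-2}R$. For $\mu(i)-\mu(j)\ge N$ this is $\pi_{\ge N-2}R$, which vanishes once $R$ is replaced by $\tau_{\le N-3}R$. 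The same computation, shifted by $k$, shows that $\partial D^k$-families of extensions fill to $D^k$-families, so the arbitrarily chosen extensions form a contractible space and the resulting CJS object is well-defined up to equivalence. This is precisely the ``off-by-one'' you were worried about, but the mechanism is \emph{fill in arbitrarily, then check the choice doesn't matter}---not \emph{recover from lower data}.
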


  \begin{COR}
    Stable $\pi_*(\scr F; \tau_{\le N-3}R) := {\rm HF}_*(\scr F; \tau_{\le N - 3} R)$ is well-defined, over $\tau_{\le N-3}(R)$.
  \end{COR}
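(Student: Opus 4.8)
The plan is to obtain this as an immediate consequence of {\sc Thm.\,\ref{thm:gcoh}}, by taking the coefficient module $Z$ to be $\tau_{\le N-3}R$ itself; the only point requiring genuine verification is that this particular $Z$ satisfies the finite homotopy-support hypothesis of {\sc Def.\,\ref{def:fin}}, after which the module structure over $\tau_{\le N-3}R$ is automatic from the ring structure on $Z$.

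First I would check the finiteness. Regard $\tau_{\le N-3}R$ as a (left and right) module over itself via its $\bb E_1$-multiplication. If $N < \infty$, then since $R$ is connective we have $\pi_n R = 0$ for $n < 0$, and by the defining property of the Postnikov truncation ({\sc Def.\,\ref{def:post}}) we have $\pi_n(\tau_{\le N-3}R) = 0$ for $n > N-3$ together with $\pi_n(\tau_{\le N-3}R) \cong \pi_n R$ for $n \le N-3$. Hence $\pi_n(\tau_{\le N-3}R)$ is nonzero only in the finite range $0 \le n \le N-3$, so $\tau_{\le N-3}R$ has finite homotopy-support. Consequently {\sc Thm.\,\ref{thm:gcoh}} applies with $Z = \tau_{\le N-3}R$ and yields a well-defined generalized homology theory, for which we adopt the abbreviation $\pi_*(\scr F; \tau_{\le N-3}R) := {\rm HF}_*(\scr F; \tau_{\le N-3}R)$. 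In the degenerate case $N = \infty$ one has $\tau_{\le N-3}R = R$, and the construction reduces to the usual Cohen-Jones-Segal (pro-)spectrum of an honest $R$-oriented flow category, for which smashing with $R$ and taking homotopy is well-defined by the same reasoning as in {\sc Thm.\,\ref{thm:act}}; in any event the finite case is the one relevant to {\sc\S\ref{sec:apps}}.

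It remains to explain the phrase ``over $\tau_{\le N-3}R$''. Recall that ${\rm HF}_*(\scr F; Z)$ is computed by smashing the Cohen-Jones-Segal pro-spectrum underlying {\sc Thms.\,\ref{thm:act},\,\ref{thm:gcoh}} with $Z$ and passing to homotopy groups; the finite homotopy-support of $Z$ is precisely what makes the resulting pro-system collapse to an honest spectrum, which is the step already carried out inside the proof of {\sc Thm.\,\ref{thm:gcoh}}. When $Z = \tau_{\le N-3}R$, smashing with this $\bb E_1$-ring spectrum produces a module spectrum over it, so its homotopy groups form a graded module over $\pi_*(\tau_{\le N-3}R)$, as asserted. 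I do not expect any real obstacle here beyond bookkeeping: the only nontrivial input, namely the collapse of the pro-system under the boundedness of $\pi_*Z$, is inherited wholesale from {\sc Thm.\,\ref{thm:gcoh}}, and the rest is a matter of recording that a ring spectrum acts on what it smashes against.
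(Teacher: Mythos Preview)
Your proposal is correct and matches the paper's approach exactly: the corollary is stated without proof in the paper, as an immediate specialization of {\sc Thm.\,\ref{thm:gcoh}} to $Z = \tau_{\le N-3}R$, and the only point to check is precisely the finite homotopy-support, which you verify via connectivity of $R$ and the truncation. Your remarks on the $N = \infty$ case and on the module structure over $\pi_*(\tau_{\le N-3}R)$ are reasonable elaborations but not strictly needed.
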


  \begin{NB}
    This definition of $\pi_*$ really depends on the underlying ring $\tau_{\le N-3}R$.
  \end{NB}

  \begin{RMK}
    One can also define the Steenrod algebra $\scr A^*_Z(\tau_{\le N-3}R)$ with $Z$-coefficients in an analogous fashion, and this will act on $Z^*$-theory.
  \end{RMK}

  \subsection{Construction thereof from a pro-spectrum}\label{ssec:act} In this subsection, we prove the main results stated previously, namely {\sc Thms.~\ref{thm:act},~\ref{thm:gcoh}}, via the Cohen-Jones-Segal (CJS) construction \cite{CJS}, \cite[\sc\S 5,\S 7]{me}. There are two main difficulties which must be addressed, going beyond what the author wrote previously in \cite{me}:
  \begin{enumerate}
    \item Due to the indexing set $I$ potentially being unbounded in both directions, the CJS construction at best only produces a \emph{pro-spectrum}.
    \item More seriously, the fact that $\scr F$ is $N$-truncated means that part of the input required to run the CJS construction is missing. It will have to be filled in via obstruction theory, which is the reason for working over the Postnikov truncation $\tau_{\le N-3}R$, as opposed to $R$ itself.
  \end{enumerate}
  First, we recall the construction in the finite case, assuming $N = \infty$ (so no missing data).

  \begin{NOT}
    Define the Euclidean space $\mathbf F_{i,j}$ with $\{u : i < u \le j\}$-corners as $\mathbf E_{i,j} \times \mathbf R_{\ge 0}^{\{j\}}$ if $i < j$, and as $0$ if $i = j$. Note that $\partial^u \mathbf F_{i,j} = \mathbf E_{i,u} \times \mathbf F_{u,j}$ for all $i < u \le j$.
  \end{NOT}

  \begin{DEF}[CJS construction for $N = \infty$]
    Let $\scr F$ be an $R$-oriented ($\infty$-truncated) flow category, with the underlying indexing set $I$ being finite. Let the \emph{unstable} CJS construction associated to $\scr F$ be defined as the quotient
    \[ {\rm CJS}^u(\scr F; R) := \coprod_{i \in I} R_{\min I, i} \wedge \mathbf F_{i, \max I}^+ \; \Big/ \sim, \]
    where the equivalence relation $\sim$ identifies points in the $\partial^j$-boundary of the $i^{\rm th}$ summand, to points in the $j^{\rm th}$ summand, via the Pontrjagyn-Thom map
    \[ R_{\min I, i} \wedge \mathbf E_{i,j}^+ \wedge \mathbf F_{j, \max I}^+ \overset{{\rm id} \wedge {\rm PT} \wedge {\rm id}}{-\!\!\!-\!\!\!\longrightarrow} R_{\min I, i} \wedge R_{i,j} \wedge \mathbf F_{j, \max I}^+ \overset{\mu \wedge {\rm id}}\to R_{\min I, j} \wedge \mathbf F_{j, \max I}^+, \]
    which was constructed in {\sc Def.~\ref{def:pt}}. For $\sim$ to be transitive on the nose, the $\bb E_1$-map ${\rm PT} : \mathbf E_{*,*}^+ \to R_{*,*}$ should be strictly associative, which can always be arranged, e.g. by working in orthogonal spectra ($\bb E_1$-algebras can always be made strictly associative.) The (stable) \keywd{CJS construction} is obtained by formally desuspending the unstable one:
    \[ {\rm CJS}(\scr F; R) := {\rm CJS}^u(\scr F; R)[\mu(\max I) - {\rm rk\;} N_{\min I, \max I}]. \]
    This shift is best understood as making the ($\max I$)-summand equal to $R[\mu(\max I)]$, since the $i^{\rm th}$ point should have degree $\mu(i)$. The shifting is also required to make sure that the construction is invariant under stabilizations of the embedding of $\scr F$. Our result \cite[\sc Prop.\,5.7]{me} ensures that the construction does not depend on the contractible space of choices we made. There is also an analog of \cite[\sc Prop.\,5.8]{me}, which in our previous work we had only stated over the sphere spectrum, but easily generalizes:
  \end{DEF}

  \begin{PROP}[Recovering Floer (co)homology]\label{prop:rechf}
    The singular cohomology $H^*_R({\rm CJS}(\scr F; R); M)$ over $R$ coincides with ${\rm HF}^*(\scr F; M)$, and likewise for singular homology.
  \end{PROP}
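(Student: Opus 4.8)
The plan is to identify the CW-structure (or rather, the filtration-by-subquotients structure) of the spectrum ${\rm CJS}(\scr F; R)$, and match its cellular cochain complex over $R$ with the Floer cochain complex of {\sc Def.\,\ref{def:hf}}. First I would observe that the defining coproduct expression for ${\rm CJS}^u(\scr F; R)$ carries a natural filtration indexed by $I$: the $n$-th filtration piece $F_n$ is the image of $\coprod_{\mu(i) \le n} R_{\min I, i} \wedge \mathbf F_{i, \max I}^+$. Because $\mathbf F_{i,\max I}$ has $\{u : i < u \le \max I\}$-corners and the gluing only identifies the $\partial^u$-strata (with $\mu(u) > \mu(i)$) to summands of higher filtration, the subquotient $F_n/F_{n-1}$ collapses all the corner strata of each $\mathbf F_{i,\max I}$ with $\mu(i) = n$, leaving, after the global desuspension by $\mu(\max I) - {\rm rk}\, N_{\min I,\max I}$, a wedge $\bigvee_{\mu(i) = n} R_{\min I, i} \wedge S^{\dim \mathbf F_{i,\max I}}[\text{shift}] \simeq \bigvee_{\mu(i)=n} \Sigma^{n} R$, using that ${\rm rk}\, N_{\min I, i} + \dim \mathbf F_{i,\max I}$ telescopes correctly against the shift. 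Thus ${\rm CJS}(\scr F; R)$ is a finite $R$-cellular spectrum with one $n$-cell (free $R$-module of rank one) for each element of $\mu^{-1}(n)$ — exactly the generators of ${\rm CF}_*(\scr F; k)$.

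Next I would compute the attaching maps, i.e.\ the connecting maps $F_n/F_{n-1} \to \Sigma F_{n-1}/F_{n-2}$ in the cellular tower. Such a map is a wedge of $R$-module maps $\Sigma^n R \to \Sigma^{n} R$ indexed by pairs $(p,q)$ with $\mu(p) = n$, $\mu(q) = n-1$, and by the construction of $\sim$ via the Pontrjagyn-Thom maps of {\sc Def.\,\ref{def:pt}} this map is precisely the $R$-linear extension of the degree-$\#_R \scr M_{p,q}$ map, where $\#_R$ is the integral of the $R$-Thom class over the compact $R$-oriented $0$-manifold $\scr M_{p,q}$ — this is the content of the local model ${\rm PT}_{p,q} : \mathbf E_{p,q}^+ \to \mathbf E_{p,q}/(\mathbf E_{p,q} - \scr M_{p,q}) \cong {\rm Th}(N_{p,q}) \to R_{p,q}$ when $\dim \scr M_{p,q} = 0$. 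Hence the cellular cochain complex ${\rm RHom}_R(F_\bullet/F_{\bullet - 1}, HM)$ has differential $d(p) = \sum_{\mu(q) = \mu(p) - 1} \#_R \scr M_{p,q} \cdot q$, which is exactly the Floer differential. Taking cohomology of this cochain complex then gives $H^*_R({\rm CJS}(\scr F; R); M) = {\rm HF}^*(\scr F; M)$, and dually the cellular chain complex ${\rm RHom}_R(\bb S, HM \wedge_R F_\bullet/F_{\bullet-1})$ — or more simply $M \otimes_k$ of the cellular $k$-chain complex — recovers ${\rm HF}_*(\scr F; M)$. As this is the same argument as \cite[\sc Prop.\,5.8]{me} but with $\bb S$ replaced by $R$ throughout, I would mostly cite that proof and indicate the modifications.

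The main obstacle is verifying that the $R$-cellular filtration is genuinely well-behaved: one must check that the identifications $\sim$ glue the corner strata of $\mathbf F_{i,\max I}$ compatibly enough that each $F_n$ is a cofibrant $R$-module and each $F_n/F_{n-1}$ splits as the asserted wedge of shifted copies of $R$ — in other words, that the Pontrjagyn-Thom gluing produces an honest CW-type filtration rather than something only stably so. This is where strict associativity of ${\rm PT}$ (arranged by working in orthogonal spectra) and the boundary-compatibility $\iota_{i,j}|_{\partial^u} = \iota_{i,u} \times \{0\} \times \iota_{u,j}$ of {\sc Def.\,\ref{def:emb}} are essential, guaranteeing that the various collapse maps agree on overlaps so the subquotients have no residual attaching data within a single filtration degree. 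Since $I$ is assumed finite here (so there are only finitely many cells and the tower terminates), no $\lim^1$ or pro-spectrum subtleties enter; the infinite case is deferred to the pro-spectrum discussion later in {\sc\S\ref{ssec:act}}. Once the filtration is in hand, the cochain identification is routine bookkeeping with Thom classes and suspensions.
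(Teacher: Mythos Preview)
Your proposal is correct and follows essentially the same cellular-filtration approach as the paper, which simply cites \cite[\sc Prop.\,5.8]{me}; you have faithfully reconstructed that argument with $R$ in place of $\bb S$. One minor slip: nontrivial gluing (i.e.\ nonempty $\scr M_{i,u}$) requires $\mu(u) < \mu(i)$, not $\mu(u) > \mu(i)$, so the $\partial^u$-strata are identified to summands of \emph{lower} $\mu$-filtration --- but this is in fact what you need for $F_n$ to be a genuine subobject and for the connecting map $F_n/F_{n-1} \to \Sigma F_{n-1}/F_{n-2}$ (which you compute correctly) to exist, so the argument stands.
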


  \begin{proof}
    This is entirely similar to the proof of \cite[\sc Prop.\,5.8]{me}.
  \end{proof}

  Next, we deal with the case that $N < \infty$, so that the Pontrjagyn-Thom $\bb E_1$-map $\mathbf E_{*,*}^+ \to R_{*,*}$ constructed in {\sc Def.~\ref{def:pt}} is only partially defined, when $\mu(i) - \mu(j) < N$. As a result, the equivalence relation in the definition of ${\rm CJS}^u$ is only partially defined. It turns out to be a bad idea to only impose this partial equivalence relation (i.e.~ its transitive closure), e.g. since {\sc Prop.~\ref{prop:rechf}} will not hold any longer. Instead, one must fill in the higher ${\rm PT}_{i,j}$ via obstruction theory:

  \begin{PROP}[Obstructions]\label{prop:obs}
    Assuming all the lower boundary strata of ${\rm PT}_{i,j}$ have been built, the obstruction to defining ${\rm PT}_{i,j}$ itself lies in $\pi_{\mu(i) - \mu(j) - 2} R$.
  \end{PROP}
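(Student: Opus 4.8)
The plan is to set up the construction of the $\mathrm{PT}_{i,j}$ as an inductive extension problem over the poset of pairs, ordered by the "height" $\mu(i)-\mu(j)$, and to identify the obstruction group at each stage with an elementary cell-attachment computation. Fix a pair $i<j$ with $\mu(i)-\mu(j)=n$, assume inductively that $\mathrm{PT}_{i',j'}$ has been built for all pairs of strictly smaller height, compatibly and strictly associatively (orthogonal spectra model, as in {\sc Def.\,\ref{def:pt}}). The strata $\partial^u\mathbf E_{i,j}=\mathbf E_{i,u}\times\{0\}\times\mathbf E_{u,j}$, together with the already-built $\mathrm{PT}_{i,u}$ and $\mathrm{PT}_{u,j}$, prescribe a map on the boundary $\partial\mathbf E_{i,j}^+$ (the union of the corner faces, smash the one-point compactification) into $R_{i,j}$, and these boundary pieces agree on overlaps by the associativity already in hand. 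So the question is purely: does the given map on $\partial\mathbf E_{i,j}^+$ extend over $\mathbf E_{i,j}^+$?

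Now $\mathbf E_{i,j}$ is a Euclidean space with $\{u:i<u<j\}$-corners, i.e. $V\times\mathbf R_{\ge0}^{\{i<u<j\}}$, so $\mathbf E_{i,j}^+$ is contractible, and the pair $(\mathbf E_{i,j}^+,\partial\mathbf E_{i,j}^+)$ is, up to homotopy, the cofiber sequence $\partial\mathbf E_{i,j}^+\to\mathbf E_{i,j}^+\to\mathbf E_{i,j}^+/\partial\mathbf E_{i,j}^+$, where the quotient is a sphere: collapsing the boundary of the corner locus $\mathbf R_{\ge0}^{\{i<u<j\}}$ together with the one-point compactification of $V$ yields $S^{\dim V}\wedge S^{\#\{i<u<j\}}$, a sphere whose dimension I will compute to be $\mathrm{rk}\,N_{i,j}+(n-1)$ — here one uses $\dim\scr M_{i,j}=n-1$, the transversality of $\iota_{i,j}$ to the boundary strata, and the definition of $N_{i,j}$ as the rank-remembering normal bundle. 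Thus extending the boundary map over $\mathbf E_{i,j}^+$ is obstructed by a single class in $\big[\,S^{\mathrm{rk}\,N_{i,j}+n-1}\,,\,R_{i,j}\,\big]=\big[\,S^{\mathrm{rk}\,N_{i,j}+n-1}\,,\,\Sigma^{\mathrm{rk}\,N_{i,j}}R\,\big]=\pi_{n-1}R$. Wait — I should double-check the index; the relevant long exact sequence of the pair reads $\cdots\to[\mathbf E_{i,j}^+/\partial\mathbf E_{i,j}^+,R_{i,j}]\to[\mathbf E_{i,j}^+,R_{i,j}]\to[\partial\mathbf E_{i,j}^+,R_{i,j}]\to[\Sigma^{-1}(\mathbf E_{i,j}^+/\partial\mathbf E_{i,j}^+),R_{i,j}]\to\cdots$, so the obstruction to lifting a class from $[\partial\mathbf E_{i,j}^+,R_{i,j}]$ lives in $[\Sigma^{-1}(\mathbf E_{i,j}^+/\partial\mathbf E_{i,j}^+),R_{i,j}]=[S^{\mathrm{rk}\,N_{i,j}+n-2},\Sigma^{\mathrm{rk}\,N_{i,j}}R]=\pi_{n-2}R=\pi_{\mu(i)-\mu(j)-2}R$, which is exactly the claimed group. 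The indeterminacy (if the obstruction vanishes, the set of extensions is a torsor over $[\mathbf E_{i,j}^+/\partial\mathbf E_{i,j}^+,R_{i,j}]=\pi_{n-1}R$) should be recorded for later, since it is what must be controlled to make the whole $\bb E_1$-map well-defined; but for the present statement only the location of the obstruction is asserted.

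The main obstacle is not the homotopy-theoretic bookkeeping but making the gluing \emph{strictly} compatible: one needs the boundary map on $\partial\mathbf E_{i,j}^+$ assembled from lower $\mathrm{PT}$'s to be honestly well-defined (single-valued on the corner overlaps, not just up to homotopy), and one needs the chosen extension to again be strictly associative so that the induction can continue. This is exactly where the orthogonal-spectra rigidification of the $\bb E_1$-structure is used, as flagged in {\sc Def.\,\ref{def:pt}}; I would phrase the induction in a model category of strictly associative algebras over the relevant operad so that at each stage the extension problem is posed in a 1-category and the cofiber-sequence obstruction analysis above applies verbatim. The only other point requiring care is the bookkeeping of ranks and shifts in $R_{i,j}=\Sigma^{\mathrm{rk}\,N_{i,j}}R$ across the identification $\partial^u\mathbf E_{i,j}\cong\mathbf E_{i,u}\times\{0\}\times\mathbf E_{u,j}$, which is handled by $N_{i,j}|_{\partial^u}\cong N_{i,u}\boxplus N_{u,j}$ and the additivity of ranks, exactly as in the $\bb E_1$-algebra structure of {\sc Def.\,\ref{def:r-or-flow}}.
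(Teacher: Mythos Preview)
Your argument is correct and follows the same approach as the paper: identify $(\mathbf E_{i,j}^+,\partial\mathbf E_{i,j}^+)$ as a single cell attachment of dimension $\dim\mathbf E_{i,j}={\rm rk\;}N_{i,j}+\dim\scr M_{i,j}={\rm rk\;}N_{i,j}+(\mu(i)-\mu(j)-1)$, then read off the obstruction group via the desuspension shift. The paper's proof is simply the terse version of what you wrote after your self-correction, computing $d=\dim\mathbf E_{i,j}-1-{\rm rk\;}N_{i,j}=\mu(i)-\mu(j)-2$ directly; your long exact sequence phrasing and the extra remarks on strict associativity and the $\pi_{n-1}R$ indeterminacy are accurate but not needed for the proposition as stated (the paper defers those to the subsequent corollaries).
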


  \begin{proof}
    The space $\mathbf E_{i,j}^+$ is built from the union of all its proper boundary strata by attaching precisely one cell, of dimension $\dim \mathbf E_{i,j}$. Hence, the obstruction element lies in $\pi_{\dim \mathbf E_{i,j} - 1} R_{i,j}$. Since $R_{i,j} = R[{\rm rk\;} N_{i,j}]$, this is the same as $\pi_d R$, where $d$ is given by
    \[ d = \dim \mathbf E_{i,j} - 1 - {\rm rk\;} {N_{i,j}} = \dim \scr M_{i,j} - 1 = \mu(i) - \mu(j) - 2, \]
    as desired.
  \end{proof}

  \begin{COR}
    If $\pi_* R = 0$ for all $* \ge N - 2$, then one can define the higher ${\rm PT}_{i,j}$ inductively by arbitrarily extending them from lower boundary strata. Moreover, given any $\partial D^k$-parameter family of such extensions, one can always fill it in to a $D^k$-parameter family.
  \end{COR}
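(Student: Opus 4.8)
The plan is to promote the partially defined Pontrjagyn-Thom data to a totally defined $\bb E_1$-map by the obstruction-theoretic induction already set up above, exploiting the fact that the hypothesis $\pi_{*}R=0$ for $*\ge N-2$ kills every relevant obstruction group. Recall that for a pair $i<j$ with $\mu(i)-\mu(j)<N$ the collapse map ${\rm PT}_{i,j}\colon \mathbf E_{i,j}^+\to R_{i,j}$ is already supplied by the $R$-orientation (cf.~{\sc Def.\,\ref{def:pt}}), and that these assemble, after passing to an orthogonal model, into a strictly associative \emph{partial} $\bb E_1$-map ${\rm PT}_{*,*}$. What must be produced is a compatible extension of ${\rm PT}_{i,j}$ over all remaining pairs, namely those with $m:=\mu(i)-\mu(j)\ge N$.

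First I would induct on $m$. Assume ${\rm PT}_{i',j'}$ has been defined, associatively, for every pair with $\mu(i')-\mu(j')<m$. Fixing a pair $(i,j)$ with $\mu(i)-\mu(j)=m$, each proper face $\partial^u\mathbf E_{i,j}=\mathbf E_{i,u}\times\{0\}\times\mathbf E_{u,j}$ (for $i<u<j$) involves strictly smaller index gaps, so ${\rm PT}$ is already prescribed there by $\mu\circ({\rm PT}_{i,u}\wedge{\rm PT}_{u,j})$; associativity of the lower-stage maps makes these prescriptions agree on the overlaps $\partial^u\cap\partial^{u'}$, hence define a single map on the union of the proper faces of $\mathbf E_{i,j}^+$. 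Exactly as in the proof of the Obstruction Proposition, $\mathbf E_{i,j}^+$ is obtained from that union by attaching a single cell of dimension $\dim\mathbf E_{i,j}$, so the obstruction to extending lies in $\pi_{\dim\mathbf E_{i,j}-1}R_{i,j}\cong\pi_{m-2}R$. Since $m\ge N$ forces $m-2\ge N-2$, this group vanishes, so \emph{any} filling extends the map (and, as the fillings form a torsor over $\pi_{m-1}R=0$, the extension is even unique up to homotopy). Choosing a filling for each such pair---the choices being mutually independent, since two pairs with the same gap interact only through pairs of strictly smaller gap---completes the inductive step, and a final rectification in the orthogonal model upgrades the resulting homotopy-coherent data to a strictly associative $\bb E_1$-map.

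For the parametric assertion I would run the identical induction on $m$, now relative to the parameter disk $D^k$. Given a $\partial D^k$-family of extensions and, inductively, a $D^k$-family of all ${\rm PT}_{i',j'}$ with gap $<m$, the map on a pair with $\mu(i)-\mu(j)=m\ge N$ is already determined on $(\partial D^k\times\mathbf E_{i,j}^+)\cup(D^k\times\bigcup_u\partial^u\mathbf E_{i,j}^+)$; relative to this subspace $D^k\times\mathbf E_{i,j}^+$ is built by attaching a single cell of dimension $k+\dim\mathbf E_{i,j}$, so the obstruction to filling lies in $\pi_{k+\dim\mathbf E_{i,j}-1}R_{i,j}\cong\pi_{k+m-2}R$, which vanishes because $k+m-2\ge m-2\ge N-2$. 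Hence every $\partial D^k$-family extends over $D^k$.

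The one genuinely delicate point is the coherence bookkeeping inside the inductive step: one must check that the maps prescribed on the various faces $\partial^u$ really do glue (this is precisely where associativity of the already-constructed data enters), and that the ``$\mathbf E_{i,j}^+$ is a single top cell attached to its proper faces'' description---together with its product-cell refinement for $D^k\times\mathbf E_{i,j}^+$---is correct; both are essentially restatements of the Obstruction Proposition. Everything else is the bare observation that the pertinent homotopy groups of $R$ are annihilated by the truncation hypothesis.
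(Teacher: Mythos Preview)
Your argument is correct and follows the same approach as the paper: invoke the Obstruction Proposition to see that the obstruction for a pair with gap $m=\mu(i)-\mu(j)\ge N$ lies in $\pi_{m-2}R$, which vanishes by hypothesis, and in the parametric case the obstruction shifts to $\pi_{m-2+k}R$, still zero. The paper's proof is literally two sentences to this effect; your additional remarks on uniqueness of fillings and on coherence/rectification are valid but go beyond what the corollary strictly asserts.
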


  \begin{proof}
    The first part follows clearly from the proposition. For the second part, the obstructions to achieving the construction parametrically lie in $\pi_{\mu(i) - \mu(j) - 2 + k} R$, which is still zero.
  \end{proof}

  \begin{COR}
    For any ring $R$, one can always obtain a well-defined CJS construction ${\rm CJS}(\scr F; \tau_{\le N-3} R)$ over the Postnikov truncation $\tau_{\le N-3} R$.
  \end{COR}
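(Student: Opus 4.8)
The plan is to deduce this final corollary purely formally from the preceding obstruction-theoretic results, applied with $R$ replaced by its Postnikov truncation $\tau_{\le N-3}R$. First I would observe that $\tau_{\le N-3}R$ is again a connective $\bb E_1$-ring spectrum (this is part of {\sc Def.\,\ref{def:post}}, which also guarantees the truncation can be carried out in $\bb E_1$-rings and that the underlying spectrum is the expected one), so all the constructions of {\sc\S\S\ref{ssec:orpt}--\ref{ssec:act}} apply verbatim to it. The key point is that $\pi_*(\tau_{\le N-3}R) = 0$ for all $* \ge N-2$, by the defining property of the truncation. Hence the hypothesis of the preceding Corollary — namely $\pi_* R' = 0$ for $* \ge N-2$ — is satisfied for $R' = \tau_{\le N-3}R$.

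Next I would run the inductive extension: using the Obstructions proposition with $R' = \tau_{\le N-3}R$, the obstruction to building each higher ${\rm PT}_{i,j}$ lies in $\pi_{\mu(i)-\mu(j)-2}(\tau_{\le N-3}R)$. For the maps that are genuinely ``new'' — those with $\mu(i)-\mu(j) \ge N$ — the relevant homotopy group is $\pi_{\ge N-2}(\tau_{\le N-3}R) = 0$, so the extension exists; for $\mu(i)-\mu(j) < N$ the map ${\rm PT}_{i,j}$ is already given by the $\tau_{\le N-3}R$-orientation. By the parametric clause of that Corollary, the same vanishing makes all these choices unique up to a contractible space of choices (fill in any $\partial D^k$-family to a $D^k$-family, since the parametric obstructions live in $\pi_{\mu(i)-\mu(j)-2+k}(\tau_{\le N-3}R)$, still zero in the relevant range). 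I would then feed the resulting fully-defined strictly-associative $\bb E_1$-map ${\rm PT}_{*,*}\colon \mathbf E_{*,*}^+ \to (\tau_{\le N-3}R)_{*,*}$ into the CJS recipe of {\sc Def.}~(CJS for $N=\infty$), now legitimately applicable since no data is missing, to obtain ${\rm CJS}(\scr F;\tau_{\le N-3}R)$. In the infinite-$I$ case one exhausts $I$ by finite sub-segments as in {\sc Prop.\,\ref{prop:exun}}, obtaining a pro-spectrum.

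Finally, for well-definedness I would invoke the contractibility of choices at two levels: the embedding data (homotopy-unique after stabilization by {\sc Prop.\,\ref{prop:exun}}, with invariance of the construction under stabilization built into the shift in the definition of ${\rm CJS}$), and the obstruction-theoretic extensions of the higher ${\rm PT}_{i,j}$ (contractible choice by the parametric Corollary applied over $\tau_{\le N-3}R$). Combining these via the analog of \cite[\sc Prop.\,5.7]{me}, any two such CJS constructions are equivalent, so ${\rm CJS}(\scr F;\tau_{\le N-3}R)$ is well-defined.

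I expect the only genuine subtlety — more bookkeeping than obstacle — is ensuring the two kinds of contractible-choice statements assemble coherently: the Whitney-embedding choices and the obstruction-theoretic fill-ins should be handled simultaneously by a single parametric induction over finite sub-segments of $I$, so that the space of all admissible $(\text{embedding}, {\rm PT}_{*,*})$ data is contractible as a whole rather than just fiberwise. This is exactly the $\partial D^k \rightsquigarrow D^k$ extension packaged in the parametric versions of {\sc Prop.\,\ref{prop:exun}} and the preceding Corollary, so no new idea is needed; one simply applies both at each stage of the exhaustion of $I$.
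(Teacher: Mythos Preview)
Your proposal is correct and matches the paper's approach exactly; the paper in fact gives no proof at all, treating the statement as an immediate consequence of the preceding corollary applied to $R' = \tau_{\le N-3}R$, which is precisely what you spell out. The only omission, more a matter of phrasing than substance, is that you should say explicitly that the original $R$-orientation on $\scr F$ is pushed forward along the $\bb E_1$-ring map $R \to \tau_{\le N-3}R$ to yield the $\tau_{\le N-3}R$-orientation furnishing the initial ${\rm PT}_{i,j}$ for $\mu(i)-\mu(j) < N$; otherwise your argument is complete and, if anything, more careful about well-definedness than the paper itself.
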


  This deals with difficulty {\sc ii} from the introductory paragraph. Difficulty {\sc i} is taken care of by the following:

  \begin{PROP}[Finite range approximation]\label{prop:fra}
    Given any degree $d \in \bb Z$, the construction ${\rm CJS}(\scr F[i:j]; \tau_{\le N-2} R)$ applied to the restriction of the flow category $\scr F$ to just indices $i < u < j$ has cohomology group $H^d_R(-;M)$ independent of $i$ and $j$, provided that $i$ is small enough and $j$ is large enough. Moreover, their common value is equal to ${\rm HF}^d(\scr F; M)$. Additionally, given any Steenrod operation $\phi \in \scr A^r_M(\tau_{\le N-3} R)$, its action $\phi : H^d_R(-;M) \to H^{d+r}_R(-;M)$ on ${\rm CJS}(\scr F[i:j]; \tau_{\le N-3} R)$ is independent of $i$ and $j$ for sufficiently small $i$ and sufficiently large $j$.
  \end{PROP}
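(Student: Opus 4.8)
The statement is a stabilization (eventual-constancy) result, so the natural strategy is to compare the CJS construction on a truncated index segment $\scr F[i:j]$ with the one on a larger segment $\scr F[i':j']$ where $i' \le i < j \le j'$, and show that the inclusion of segments induces an isomorphism on $H^d_R(-;M)$ once $i$ is small enough and $j$ large enough (depending on $d$). The key input is already isolated in {\sc Prop.\,\ref{prop:rechf}}, which identifies $H^*_R({\rm CJS}(\scr F[i:j]; \tau_{\le N-3}R); M)$ with the Floer cohomology of the \emph{finite subcategory} $\scr F[i:j]$; concretely, the cochain complex in degree $d$ is $\mathrm{Hom}_k$ of the free module on $\mu^{-1}(d) \cap (i,j)$, with differential counting $\#_R \scr M_{p,q}$. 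So the whole problem reduces to a statement about the inverse/direct system of truncated cochain complexes.

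**Key steps, in order.** First I would record that the Floer cochain complex ${\rm CF}^*(\scr F[i:j]; M)$ is, in each degree, literally the restriction of ${\rm CF}^*(\scr F; M)$ to generators in the open interval $(i,j)$, and that the differential only involves generators in adjacent gradings — hence $\mu^{-1}(d)$ and $\mu^{-1}(d\pm1)$ are finite by {\sc Def.\,\ref{def:truncfl}(ii)}. Second, I would observe that to compute $H^d$ of the full complex one only needs the three-term truncation in degrees $d-1, d, d+1$; since each $\mu^{-1}(d-1), \mu^{-1}(d), \mu^{-1}(d+1)$ is finite, there exist $i_0$ small and $j_0$ large such that all these generators lie in the interval $(i_0,j_0)$, and the differentials into and out of them are unchanged for any $(i,j) \supseteq (i_0,j_0)$. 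Third, for such $(i,j)$ the inclusion ${\rm CF}^*(\scr F[i:j];M) \hookrightarrow {\rm CF}^*(\scr F[i':j'];M)$ (for the larger segment) is an isomorphism in degrees $d-1,d,d+1$, hence induces an isomorphism on $H^d$; taking the colimit/limit over all such segments gives ${\rm HF}^d(\scr F;M)$, proving both the independence and the identification of the common value. Fourth, for the Steenrod operations: by {\sc Def.\,\ref{def:steen}} the action of $\phi \in \scr A^r_M(\tau_{\le N-3}R)$ on $H^d_R({\rm CJS}(\scr F[i:j]);M)$ is natural with respect to maps of $\tau_{\le N-3}R$-module spectra, and the segment inclusion $\scr F[i:j] \hookrightarrow \scr F[i':j']$ induces such a map of CJS spectra (collapsing the extra cells); hence $\phi$ commutes with the stabilization isomorphism on $H^d$, so once $i,j$ are chosen so that both $H^d$ and $H^{d+r}$ have stabilized (using a common small $i$ and large $j$ that works for both $d$ and $d+r$), the action itself is independent of $i,j$.

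**The main obstacle.** The delicate point is step four: one must be sure that the segment-restriction map $\scr F[i:j] \hookrightarrow \scr F[i':j']$ really does induce a \emph{map of $\tau_{\le N-3}R$-module pro-/spectra} ${\rm CJS}(\scr F[i':j']) \to {\rm CJS}(\scr F[i:j])$ (or the other direction) that is compatible with the identification of {\sc Prop.\,\ref{prop:rechf}}, so that the abstract Steenrod action is carried along. This requires checking that the obstruction-theoretic choices of the higher ${\rm PT}_{i,j}$ (filled in via the preceding corollaries over $\tau_{\le N-3}R$) on the smaller segment can be taken to be the restrictions of those on the larger segment — which follows from the parametric extension statement in the Corollary after {\sc Prop.\,(Obstructions)}, applied to interpolate between any two choices, so that the induced map on cohomology is well-defined and natural. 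Once this compatibility is in hand, the rest is the bookkeeping of finiteness of $\mu^{-1}(d)$ described above; I do not expect any further difficulty, since $\tau_{\le N-3}R$ being fixed means no new homotopy-theoretic subtleties enter beyond those already dispatched in this subsection.
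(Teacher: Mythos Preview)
Your proposal is correct and follows essentially the same approach as the paper: both reduce to the finiteness of $\mu^{-1}(d)$ to conclude that eventually no new generators appear in the relevant degrees, so cohomology stabilizes. The paper phrases this at the spectrum level (enlarging the segment attaches or co-attaches a $\tau_{\le N-3}R$-cell in degree $\mu$ of the new index, which cannot affect cohomology in a fixed finite range once that range has been exhausted), whereas you phrase it at the chain level via {\sc Prop.\,\ref{prop:rechf}} and then invoke naturality of Steenrod operations separately; these are the same argument viewed from two sides. The compatibility-of-obstruction-theoretic-choices issue you flag as the main obstacle is real but implicit in the paper's cell-attachment claim, and is dispatched exactly as you say, by the parametric extension corollary.
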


  \begin{proof}
    Decreasing $i$ by one (resp. increasing $j$ by one) changes ${\rm CJS}(\scr F[i:j]; \tau_{\le N-3} R)$ by attaching (resp. co-attaching), a $\mu(i)$-shifted (resp. $\mu(j)$-shifted) $\tau_{\le N-3}R$-cell. Since for each $k \in \bb Z$ there are only finitely elements in the preimage $\mu^{-1}(k)$, this means that as $i$ gets smaller and smaller, and $j$ gets larger and larger, one will have eventually exhausted all points of $\mu$ lying in any given finite range. As a result, its cohomology, as a module over the Steenrod algebra, does not change in that range.
  \end{proof}

  The result {\sc Thm.~\ref{thm:act}} now follows. Lastly, we note how this story parallels to generalized (co)homology theories, thereby proving {\sc Thm.~\ref{thm:gcoh}}.

  \begin{PROP}
    The analogous result to {\sc Prop.~\ref{prop:fra}} holds for generalized (co)homology theories $Z^*, Z_*$, given the finiteness condition {\sc Def.~\ref{def:fin}} is satisfied.
  \end{PROP}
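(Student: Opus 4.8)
The plan is to reuse the proof of {\sc Prop.\,\ref{prop:fra}} essentially verbatim, the only new ingredient being a uniform bound on the degrees in which a single cell can contribute. Recall that passing from $\scr F[i:j]$ to $\scr F[i{-}1:j]$ attaches a free $\tau_{\le N-3}R$-cell $\Sigma^{\mu(i)}\tau_{\le N-3}R$ to ${\rm CJS}(\scr F[i:j];\tau_{\le N-3}R)$, and passing to $\scr F[i:j{+}1]$ co-attaches a free cell $\Sigma^{\mu(j)}\tau_{\le N-3}R$. For ordinary cohomology over $R$ such a cell affects a single degree, namely $\mu(i)$; the point is that for the generalized theory it affects a bounded \emph{window} of degrees. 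Indeed, base change along the identity $\tau_{\le N-3}R \to \tau_{\le N-3}R$ gives $Z_*(\Sigma^{\mu(i)}\tau_{\le N-3}R) \cong \pi_{*-\mu(i)}(Z)$ and dually $Z^*(\Sigma^{\mu(i)}\tau_{\le N-3}R)\cong \pi_{\mu(i)-*}(Z)$, so by the finiteness hypothesis {\sc Def.\,\ref{def:fin}} there is a finite interval $[a,b] \supseteq \{n : \pi_n Z \ne 0\}$, and attaching (resp. co-attaching) the cell can only alter $Z_\bullet$ (resp. $Z^\bullet$) in degrees lying in $[\mu(i)+a,\mu(i)+b]$.

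Now fix a degree $d$ --- or, for the Steenrod-type action, a finite range of degrees, enlarged on both sides by $|b-a|$ so that source and target of the operation are both covered. The long exact sequence of the (co)attachment shows that $Z_d$ is unchanged whenever the newly attached cell $\Sigma^{\mu(u)}\tau_{\le N-3}R$ has $d \notin [\mu(u)+a,\mu(u)+b]$, i.e.\ whenever $\mu(u) \notin [d-b, d-a]$. The set $S_d := \{u \in I : \mu(u) \in [d-b,d-a]\} = \bigcup_{k=d-b}^{d-a}\mu^{-1}(k)$ is a finite union of finite sets {\sc(Def.\,\ref{def:truncfl})}, hence finite; since $I$ embeds in $(\bb Z,<)$, $S_d$ is contained in some finite sub-segment of $I$. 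Choosing $i$ below and $j$ above that sub-segment, the restriction $\scr F[i:j]$ already contains all of $S_d$, and every index added by further decreasing $i$ or increasing $j$ lies outside $S_d$ and therefore cannot affect $Z_d$. Thus $Z_d({\rm CJS}(\scr F[i:j];\tau_{\le N-3}R))$ is independent of $i$ and $j$ once $i$ is small enough and $j$ large enough, and likewise for the action of any operation in $\scr A^*_Z(\tau_{\le N-3}R)$ on it; the same argument with co-attachments replacing attachments handles $Z^*$.

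It remains to note that the common stable value is well-defined, i.e.\ independent of the auxiliary data. For each fixed finite sub-segment $[i:j]$, the spectrum ${\rm CJS}(\scr F[i:j];\tau_{\le N-3}R)$ is specified up to a contractible space of choices --- the embedding data of {\sc Prop.\,\ref{prop:exun}}, together with the obstruction-theoretic extensions of the higher ${\rm PT}_{i,j}$ built above --- all of which come equipped with $D^k$-parametric refinements, so that any two choices are joined by a path, any loop bounds a disk, and so on. Consequently $Z_*$ and $Z^*$ of ${\rm CJS}(\scr F[i:j];\tau_{\le N-3}R)$, as well as the Steenrod action on the latter, depend on no choices; combined with the stabilization above, this produces the well-defined theories $Z_*(\scr F) = {\rm HF}_*(\scr F;Z)$ and $Z^*(\scr F) = {\rm HF}^*(\scr F;Z)$ of {\sc Thm.\,\ref{thm:gcoh}}, which one simply takes as the \emph{definition} of generalized Floer (co)homology (there being no naive chain-level model over $\tau_{\le N-3}R$ when $N > 3$).

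The one place where genuine care is needed --- and the reason the finiteness hypothesis is indispensable rather than cosmetic --- is the uniform window bound: if $\pi_* Z$ were supported in infinitely many degrees, infinitely many cells could contribute to a single $Z_d({\rm CJS}(\scr F[i:j]))$, the sequence in $i,j$ would at best be pro-constant and not eventually constant, and Floer homology with $Z$-coefficients would not be well-defined as a plain graded group. So the substance of the proof is the bookkeeping verifying that the contribution window of each cell has width at most $b-a$, uniformly over all cells, after which the combinatorics of {\sc Prop.\,\ref{prop:fra}} carries over unchanged.
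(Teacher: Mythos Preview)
Your proof is correct and follows the same approach as the paper: both reduce the argument of {\sc Prop.\,\ref{prop:fra}} to the observation that a single free $\tau_{\le N-3}R$-cell contributes to $Z_*$ or $Z^*$ only in a bounded window of degrees, which is exactly the content of the finiteness hypothesis. The paper compresses this into a one-line remark, while you spell out the cell-attachment bookkeeping and the role of the long exact sequence in detail; your added commentary on well-definedness and on why the finiteness hypothesis is indispensable is accurate but goes somewhat beyond what the paper records here.
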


  \begin{proof}
    This reduces to checking whether $Z^*(\tau_{\le N - 3}R)$ and $Z_*(\tau_{\le N - 3}R)$ are vanishing outside some finite range, which is precisely the content of the finiteness condition.
  \end{proof}

  \begin{RMK}[Floer pro-spectrum, and technical issues]\label{rmk:prosp}
    The segments ${\rm CJS}(\scr F[i:j]; \tau_{\le N-3}R)$ fit into a so-called module \keywd{pro-spectrum}, cf.~ \cite{CJS}. Sadly, no model structure on pro-spectra has been constructed, in which this pro-spectrum would be an invariant of flow categories modulo homotopy equivalences of flow categories. One would like the weak equivalences to be maps that induce isomorphisms on ordinary homology, while the model structures defined by Christensen-Isaksen \cite{ChrIs} are based on $\pi^*$-equivalences.
  \end{RMK}

  \begin{RMK}[Obstruction theory and Steenrod operations]\label{rmk:obs}
    The obstructions described in {\sc Prop.~\ref{prop:obs}} in particular give sufficient conditions for when one can upgrade the Cohen-Jones-Segal construction from a Postnikov truncation $\tau_{\le r} R$ to the next $\tau_{\le r + 1} R$. It would be interesting to know to what extent this also provides a necessary condition, since a priori this obstruction is built from choosing a specific cell structure for the module spectrum, as well as a particular $R$-orientation for the next moduli spaces.

    We take the liberty here to speculate a bit more on this subject. Another way to prove that a $\tau_{\le r} R$-module $M$ is not induced from $\tau_{\le r + 1} R$ is to show that $M$ has some nonzero cohomology operation $\alpha$ over $\tau_{\le r} R$, such that $\alpha$ is in the kernel of $\scr A^*(\tau_{\le r} R) \to \scr A^*(\tau_{\le r + 1} R)$. One wonders if this kind of Steenrod algebraic obstruction is at all related to the obstructions we got in {\sc Prop.~\ref{prop:obs}}.

    We also note that the obstructions of {\sc Prop.~\ref{prop:obs}}, in the case of monotone Lagrangians (to be discussed in {\sc\S\ref{sec:apps}} shortly), should express some kind of real (homotopy) Gromov-Witten invariant. Classically, the curvature phenomena one obtains for minimal Maslov number $N_\mu = 2$ translate algebraically into $d^2 = c \cdot {\rm id}$, where $c$ is a count of holomorphic disks with boundary on the Lagrangian, hence exhibiting the obstruction to making ${\rm CF}_*$ a chain complex. If our speculations above are in any sense correct, then the real Gromov-Witten theory of the Lagrangian would be somehow reflected in the Steenrod algebra action. It would also show that the Floer homotopy type is homotopically ``interesting,'' as it does not extend to the next Postnikov truncation.
  \end{RMK}

\newpage
\section{Applications to Monotone Lagrangian theory}\label{sec:apps}

  \subsection{Symplectic setup}\label{ssec:setup} Although our theory no doubt admits many generalizations, in the present exposition we content ourselves with studying data of the following particular kind:
  \begin{enumerate}
    \item A closed symplectic manifold $(M, \omega)$, which is \emph{monotone}, i.e.~ $\int_\omega$ is a real positive multiple of $2c_1$, as homomorphisms $\pi_2 (M) \to \bb R$.
    \item A closed embedded Lagrangian $L \subset M$, which is \emph{monotone}, i.e.~ $\int_\omega$ is a real positive multiple of $\mu$, as homomorphisms $\pi_2 (M,L) \to \bb R$. (This monotonicity condition actually implies the former.)
    \item These are required to satisfy $\pi_1(M,L) = 0$ and $\pi_2(M,L) = \bb Z$.
    \item Let $N_\mu$ be the positive generator of the image of $\mu : \pi_2(M,L) \to \bb Z$, and $N_{2c_1}$ be the positive generator of the image of $2c_1 : \pi_2(M) \to \bb Z$. One automatically has $N_\mu \le N_{2c_1}$, by the action of $\pi_2(M)$ on $\pi_2(M,L)$:
    \[ \mu(A \# u) = 2c_1(A) + \mu(u), \qquad \forall A \in \pi_2(M), u \in \pi_2(M,L). \]
    \!{\em We require that} $N_\mu \ge 3$ (in fact, $N_\mu \ge 5$ in order for our theory to produce results that go beyond mere homology.)
  \end{enumerate}
  We will impose one extra topological constraint later, when we discuss index theory, cf.~ {\sc Def. \ref{def:ubr}}, in order to get a $U$-structure on our $N_\mu$-truncated flow category. The monotonicity conditions ensure there is no bubbling for sufficiently low-dimensional moduli spaces, while the topological constraints guarantee that every Hamiltonian chord can be homotoped to lie in $L$, and that the space of such homotopies is a $\bb Z$-torsor. The latter will ensure there are only finitely many chords of the same index.

  \begin{EXP}
    The standard inclusion $\bb{RP}^n \subset \bb{CP}^n$ satisfies these conditions when $n \ge 2$, since $N_\mu = n + 1$. We will later need to enforce $n$ to be odd, for the $U$-structure to exist.
  \end{EXP}

  \begin{DEF}[Hamiltonians]
    A \keywd{non-degenerate} time-dependent Hamiltonian is a smooth function $H : M \times [0,1] \to \bb R$, often denoted $H_t : M \to \bb R$, such that the time-dependent flow $\Phi_t$ associated to it, defined by
    \[ \omega(v, \dot \Phi_t(x)) = dH_t(v), \qquad \forall t \in [0,1], x \in M, v \in T_x M. \]
    satisfies $\Phi_1(L) \pitchfork L$. A more lax condition is that $H$ is \keywd{cleanly degenerate}, which means that $\Phi_1(L)$ intersects $L$ \emph{cleanly}, i.e.~ $C := \Phi_1(L) \cap L$ is a smooth submanifold of $M$, satisfying $TC = TL \cap \Phi_1(TL)$ as linear subspaces of $TM$.
  \end{DEF}

  \begin{DEF}[Complex structures]
    A time-dependent almost complex structure $J : [0,1] \to C^\infty(\scr{End}(TM))$, often denoted $J_t \in C^\infty(\scr{End}(TM))$, is a smooth 1-parameter family of tensors satisfying $J^2 = -{\rm id}$. It is said to be \keywd{$\omega$-compatible} when $g_t(-,-) := \omega(-,J_t -)$ is a Riemannian metric on $M$ for all $t \in [0,1]$.
  \end{DEF}

  \begin{DEF}[Floer equation]
    Assume we are given a \emph{non-degenerate} Hamiltonian $H_t$ and $\omega$-compatible almost-complex structures $J_t$. Let also $x_0, y_0 \in \Phi_1(L) \cap L$ be two intersection points, which can be extended to \keywd{Hamiltonian chords} $x, y : (I, \partial I) \to (M,L)$ via $x_t := \Phi_t(x)$. Then, the \keywd{Floer equation} for a $C^\infty$ (or more generally $W^{1,2}_{\rm loc}$-class) map $u : (\bb R_s \times [0,1]_t, \bb R_s \times \{0,1\}) \to (M,L)$ takes the form
    \[\numeq\label{eq:floer} \partial_s u + J_t(\partial_t u - X_{H_t}|_{u}) = 0 \qquad \text{on } u^*TM, \]
    where $X_{H_t}$ is the unique time-dependent vector field satisfying $\omega(-, X_{H_t}) = dH_t$. For generic $J_t$, the space of solutions carries the structure of a manifold, cf.~ \cite[\sc Thm.\,3.2]{Albers}.
    
    To understand the dimension of its various connected components, we need to introduce a few more concepts: a \emph{homotopy-cap} for a Hamiltonian chord $x$ is a (homotopy class of) a choice of homotopy taking $x$ inside $L$. Since $\pi_1(M,L) = 0$, every Hamiltonian chord admits a homotopy-cap, and moreover the set of such homotopy-caps is a $\bb Z = \pi_2(M,L)$-torsor. Hamiltonian chords equipped with a choice of homotopy-cap are called \keywd{capped chords}, and denoted $\tilde x, \tilde y$, etc.~ Denote the set of all such capped chords by $I_H$.

    Now, a solution to the Floer equation is said to be \emph{cap-compatible} with respect to two choices of caps for $x$ and $y$ if gluing the chosen cap for $x$ to the strip $u$ induces the chosen cap for $y$. Therefore, we can restrict to the space of solutions between $\tilde x$ and $\tilde y$ that are cap-compatible. In this situation, there exists a function $\mu : I_H \to \bb Z$, called the \keywd{Maslov grading}, such that the dimension of the space of cap-compatible solutions from $\tilde x$ to $\tilde y$ has dimension $\mu(\tilde x) - \mu(\tilde y)$. This Maslov grading is a priori only well-defined up to a shift, although it is possible to absolutize it canonically, cf.~ \cite{Albers}. We also note that if $\tilde x'$ is obtained by adding a certain element $A \in \pi_2(M,L)$ to the homotopy-cap of $\tilde x$, then one has
    \[ \mu(\tilde x') - \mu(\tilde x) = \mu(A). \]
    This is always a multiple of $N_\mu$, and in fact is equal to $A \cdot N_\mu$, if we identify $A \in \pi_2(M,L) = \bb Z$ with an integer.

    The space of cap-compatible solutions from $\tilde x$ to $\tilde y$ admits a natural $\bb R$-action, in view of the $s$-translation symmetry of the equation. This action is free and proper for all $\tilde x, \tilde y$, \emph{except} for the case that $\tilde x = \tilde y$, in which the space is a point (given by the constant solution). This inconvenience will be important in establishing precisely the truncation level of our flow category. For $\tilde x \neq \tilde y$, we denote the space of cap-compatible solutions modulo this $\bb R$-action by $\scr M_{\tilde x, \tilde y}^\circ$, where the little circle indicates that this is the interior of some suitably compactified version.
  \end{DEF}

  \subsection{The Lagrangian flow category}\label{ssec:lagflow} Now, we construct the flow category associated to the data assumed in the previous subsection.

  \begin{THM}[Adding broken strips]\label{thm:brok} (due to Large \cite{Large}.)
    For all $\tilde x \neq \tilde y$, there is a partial compactification $\scr M_{\tilde x,\tilde y}$ of $\scr M_{\tilde x, \tilde y}^\circ$ obtained set-theoretically by adding all sequences of broken trajectories starting at $\tilde x$ and ending at $\tilde y$ (and disallowing the exceptional constant strip). This compactification can be endowed with the structure of a smooth manifold with $I_H$-corners, labelling the places where the path breaks, canonically up to an appropriate choice of gluing parameter.
  \end{THM}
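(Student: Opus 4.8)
The plan is to build the partial compactification $\scr M_{\tilde x, \tilde y}$ and its smooth $I_H$-corner structure by an induction on the Maslov difference $\mu(\tilde x) - \mu(\tilde y)$, bootstrapping from the analytic gluing theory of Large \cite{Large}, which in turn rests on the exponential decay estimates of Fukaya-Oh-Ohta-Ono \cite{FOOOexp}. First I would recall that for generic $\omega$-compatible $J_t$ the space $\scr M_{\tilde x, \tilde y}^\circ$ is a smooth manifold of dimension $\mu(\tilde x)-\mu(\tilde y)-1$, cf.~\cite{Albers}. Since $L\subset M$ is monotone with minimal Maslov number $N_\mu$ and we are only ever concerned with moduli spaces of dimension at most $N_\mu - 2$, there is no disk or sphere bubbling in the relevant range: any Gromov limit of a sequence of finite-energy strips of bounded index decomposes purely into a broken trajectory, i.e.~a finite concatenation $u_1 \# u_2 \# \cdots \# u_k$ of non-constant cap-compatible Floer strips joining intermediate capped chords $\tilde x = \tilde z_0, \tilde z_1, \dots, \tilde z_k = \tilde y$, with strictly decreasing Maslov grading. (Here the exclusion of the constant strip is exactly the statement that each $u_\ell$ is non-constant, so the combinatorics of breakings is finite.) This gives the set-theoretic description of $\scr M_{\tilde x, \tilde y}$ as the union of $\scr M_{\tilde x, \tilde y}^\circ$ together with all products $\scr M_{\tilde x, \tilde z_1}^\circ \times \cdots \times \scr M_{\tilde z_{k-1}, \tilde y}^\circ$ over all such chains.

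Next I would put the topology and smooth structure on $\scr M_{\tilde x, \tilde y}$. The key input is Large's gluing theorem: given a once-broken configuration $(u', u'')\in \scr M_{\tilde x, \tilde z}^\circ \times \scr M_{\tilde z, \tilde y}^\circ$, there is a gluing map, defined for a gluing parameter in some interval $(T_0, \infty)$ (equivalently $(0,\epsilon)$ after reparametrizing), producing a family of honest Floer strips converging to the broken one as the parameter degenerates; moreover this gluing map is a smooth embedding onto a neighborhood of the broken configuration inside $\scr M_{\tilde x, \tilde y}$, and it is a local diffeomorphism in the interior variables. Iterating over all the breaking loci and invoking the associativity/compatibility of Large's gluing construction for multiply-broken configurations, one obtains an atlas of charts modelled on $\mathbf R^{\mu(\tilde x)-\mu(\tilde y)-1-|J|}\times \mathbf R_{\ge 0}^J$ where $J\subset \{u : \tilde x < u < \tilde y\}$ records which chords the trajectory breaks at; the open stratum $\scr M_{\tilde x, \tilde y}^\circ$ is $J=\varnothing$, and the codimension-$|J|$ stratum is the product of the corresponding lower moduli spaces. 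The compatibility of these charts with the inductively-constructed smooth structures on the lower-dimensional $\scr M_{\tilde z_i, \tilde z_{i+1}}$ is what makes the corner identifications $\scr M_{\tilde x, \tilde u}\times \scr M_{\tilde u, \tilde y}\cong \partial^{\tilde u}\scr M_{\tilde x, \tilde y}$ well-defined and associative, matching the axioms of {\sc Def.\,\ref{def:truncfl}} (modulo the $R$-orientation, which is taken up separately). Compactness of $\scr M_{\tilde x, \tilde y}$ follows from Gromov compactness together with the no-bubbling observation above: every sequence subconverges to a broken trajectory, which by construction is a point of $\scr M_{\tilde x, \tilde y}$.

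The main obstacle I anticipate is not the abstract bookkeeping but the \emph{coherence} of the gluing charts near deep corners — i.e.~verifying that the gluing maps for configurations broken at several chords agree on overlaps with the ones built one breaking at a time, so that the resulting $I_H$-corner structure is genuinely a manifold-with-corners and not just a stratified space with smooth strata. This is precisely where the exponential decay estimates of \cite{FOOOexp} and the careful pregluing/Newton-iteration bookkeeping in \cite{Large} are essential: one needs the gluing parameter(s) to be smooth functions on the glued region and the transition functions between a ``glue-then-glue'' chart and a ``simultaneously glue'' chart to be smooth with the corner faces going to corner faces. I would handle this exactly as in \cite{Large}, citing his smooth gluing package, and only indicate how the multi-parameter version assembles; the other genuinely delicate point — that the gluing parameter is canonical only up to a contractible choice (reflected in the ``canonically up to an appropriate choice of gluing parameter'' clause) — is dispatched by noting that different admissible gluing profiles differ by a diffeomorphism isotopic to the identity rel boundary, so the resulting smooth structure is well-defined up to a contractible space of choices, which is all the later CJS construction of {\sc\S\ref{ssec:act}} requires.
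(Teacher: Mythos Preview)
Your approach is essentially the same as the paper's: define the set by adjoining broken configurations, topologize via Gromov convergence, and import the smooth corner structure from Large's gluing package, with the exponential decay estimates of \cite{FOOOexp} doing the analytic work at deep corners.

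Two small points where you diverge from the paper's organization. First, the theorem as stated is a \emph{partial} compactification valid for \emph{all} $\tilde x \neq \tilde y$, not only those with $\mu(\tilde x) - \mu(\tilde y) < N_\mu$; it makes no compactness claim. Your no-bubbling and compactness arguments are correct but belong to the next theorem ({\sc Thm.\,\ref{thm:lagflow}}), where the paper places them. Here one only needs that the added strata are the broken configurations, not that nothing else can appear in a Gromov limit. Second, the paper explicitly flags one point you pass over: Large works in the Liouville setting, not the closed monotone one, and the paper justifies the transfer by observing that the smoothness of the transition functions is a purely local statement, independent of the global symplectic hypotheses. You should say this, since otherwise the citation of \cite{Large} is formally to a theorem whose hypotheses are not met.
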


  \begin{proof}
    The set $\scr M_{\tilde x, \tilde y}$ is simply defined as the disjoint union of all products $\scr M_{\tilde x, \tilde z_1} \times \cdots \times \scr M_{\tilde z_k, \tilde y}$ (disallowing $\tilde x = \tilde z_1$, $\tilde z_i = \tilde z_{i+1}$, $\tilde z_k = \tilde y$) coming from intermediary sequences of capped chords. The topology is given by the standard Gromov convergence of nets. That this is a topological manifold with $I_H$-corners follows from standard gluing arguments. The difficult part is the smooth structure, which is constructed by Large \cite[{\sc\S6}]{Large}, upon choosing an (appropriate) gluing profile. Even though the setup of his paper is Liouville manifolds, the proof of the smoothness result does not use the Liouville structure. This is because the well-definition of the smooth structure boils down to checking the smoothness of a certain transition function, which is a purely local statement that follows from certain exponential decay estimates, due originally to Fukaya-Oh-Ohta-Ono \cite[{\sc\S6}]{FOOOexp}. We should also mention that, since we never add sphere or disk bubbles in our compactifications, we need not worry about the technical difficulties in making the gluing parameter agree with the more natural one coming from Gromov-Witten theory.
  \end{proof}

  \begin{THM}[Lagrangian Floer flow category]\label{thm:lagflow}
    From the data of $\scr M_{\tilde x, \tilde y}$, one can construct an $N_\mu$-truncated flow category $\scr F(M,L)$.
  \end{THM}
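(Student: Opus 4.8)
The plan is to build $\scr F(M,L)$ by hand: take $I := I_H$ as the index set, the absolutized Maslov grading $\mu$ of \cite{Albers} as the grading function, and the partial compactifications $\scr M_{\tilde x,\tilde y}$ of \textsc{Thm.}~\ref{thm:brok} as the structure spaces, so that the proof reduces to checking that these fit the axioms of \textsc{Def.}~\ref{def:truncfl} in the prescribed range. First I would set up the discrete data. Non-degeneracy of $H$ together with compactness of $M$ gives finitely many Hamiltonian chords; since the homotopy-caps of each form a $\pi_2(M,L)=\bb Z$-torsor and a cap-shift by $A$ changes the Maslov index by $A\cdot N_\mu\neq 0$, the fibers $\mu^{-1}(d)\subset I_H$ are all finite, which is axiom~\textsc{ii}. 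For axiom~\textsc{i}, refine the preorder ``$\mu(\tilde x)>\mu(\tilde y)$'' to an honest total order $<$ on $I_H$ by fixing once and for all an arbitrary total order inside each finite fiber; as the fibers are finite and indexed by $\bb Z$, this order embeds order-isomorphically into $(\bb Z,<)$. The one implication to record is that $\mu(\tilde x)>\mu(\tilde y)$ forces $\tilde x<\tilde y$.

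Next I would populate the moduli spaces. For $\tilde x<\tilde y$ with $\mu(\tilde x)-\mu(\tilde y)<N_\mu$: when $\mu(\tilde x)\le\mu(\tilde y)$ set $\scr M_{\tilde x,\tilde y}:=\varnothing$, which is both forced (the nominal dimension $\mu(\tilde x)-\mu(\tilde y)-1$ is negative) and consistent (for generic $J$ the interior $\scr M^\circ_{\tilde x,\tilde y}$ is already empty here); when $1\le\mu(\tilde x)-\mu(\tilde y)\le N_\mu-1$ set $\scr M_{\tilde x,\tilde y}$ to be the space of \textsc{Thm.}~\ref{thm:brok}. I then check that this is a \emph{compact} smooth $\bigl(\mu(\tilde x)-\mu(\tilde y)-1\bigr)$-manifold with $\{u:\tilde x<u<\tilde y\}$-corners. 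Smoothness and the corner charts are the output of \textsc{Thm.}~\ref{thm:brok}, whose strata are labelled by intermediary capped chords $\tilde z$ with $\mu(\tilde x)>\mu(\tilde z)>\mu(\tilde y)$; by the recorded implication these are exactly the $\tilde z$ with $\tilde x<\tilde z<\tilde y$, so the combinatorics matches \textsc{Def.}~\ref{def:truncfl} verbatim. Compactness is precisely the monotonicity input recalled in \textsc{Exs.}~\ref{exs:mot} (Lagrangian case): a degenerating sequence in $\scr M^\circ_{\tilde x,\tilde y}$ can only break into trajectories or bubble off a holomorphic disk (Maslov $\ge N_\mu$) or sphere (twice-Chern $\ge N_{2c_1}\ge N_\mu$), and none of the latter can occur in dimension $\le N_\mu-2$; Gromov compactness then yields a compact space whose frontier consists only of broken configurations. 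This is also where the exceptional constant strip stays harmless — it enters only in the bubbled configurations, which are excluded in this range — and, symmetrically, why allowing the index gap to reach $N_\mu$ itself would create genuine boundary (a disk bubble glued onto such a constant strip), pinning the truncation level at exactly $N_\mu$.

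Finally, the associative stratification-respecting identifications $\scr M_{\tilde x,\tilde z}\times\scr M_{\tilde z,\tilde y}\cong\partial^{\tilde z}\scr M_{\tilde x,\tilde y}$ of \textsc{Def.}~\ref{def:truncfl}-\ref{i:ident} are read off directly from the description, in the proof of \textsc{Thm.}~\ref{thm:brok}, of $\scr M_{\tilde x,\tilde y}$ as the disjoint union over intermediary sequences of chords of the products $\scr M_{\tilde x,\tilde z_1}\times\cdots\times\scr M_{\tilde z_k,\tilde y}$, using that Large's corner charts are built compatibly with products; associativity is then the tautology that iterated face identifications on either side enumerate the same broken configuration. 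The sole choice involved is the gluing profile of \textsc{Thm.}~\ref{thm:brok}: fixing one such profile globally (simultaneously for all pairs) makes the identifications strictly associative, as \textsc{Def.}~\ref{def:truncfl} requires.

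I expect the main obstacle to be not a single hard argument but the bookkeeping that ties the analysis to the combinatorics: confirming that monotonicity excludes bubbling exactly in the range $\dim\scr M_{\tilde x,\tilde y}\le N_\mu-2$, so that the truncation level comes out to be precisely $N_\mu$ and not larger; that the broken-trajectory frontier furnished by \textsc{Thm.}~\ref{thm:brok} is literally a manifold with $\{u:i<u<j\}$-corners in the sense of \textsc{Def.}~\ref{def:truncfl}; and that a single global gluing-profile choice yields \emph{strictly} associative face identifications rather than ones associative only up to higher coherence.
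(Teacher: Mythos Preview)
Your proof is correct but departs from the paper in one structural choice: you order $I_H$ by the Maslov grading $\mu$ (with arbitrary tiebreaks inside each finite fiber), whereas the paper orders by the \emph{action functional}
\[
\scr A(\tilde x) = \int_{\bb D^2} \tilde x^*\omega - \int_{[0,1]} H_t(x(t))\,dt,
\]
declaring $\tilde x < \tilde y$ when $\scr A(\tilde x) > \scr A(\tilde y)$, and then uses monotonicity ($\scr A(\tilde x') - \scr A(\tilde x) = \lambda\cdot(\mu(\tilde x') - \mu(\tilde x))$ for different caps of the same chord) to embed $(I_H,<)$ into $(\bb Z,<)$. The action ordering is arguably the more natural one: it strictly decreases along any non-constant Floer solution with no appeal to transversality, so the order is intrinsic rather than contingent on genericity of $J$. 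Your Maslov ordering works as well, since for generic $J$ the negative-virtual-dimension moduli are empty, and it has the minor advantage that embeddability in $\bb Z$ is immediate. One small slip to fix: you assert that the intermediary $\tilde z$ with $\mu(\tilde x) > \mu(\tilde z) > \mu(\tilde y)$ are ``exactly'' those with $\tilde x < \tilde z < \tilde y$, but your recorded implication gives only the forward inclusion; the surplus $\tilde z$'s (those sharing a Maslov value with an endpoint, placed in between by your tiebreak) contribute empty strata, so the combinatorics still matches, but the word ``exactly'' should be weakened. The compactness and corner-structure discussion otherwise mirrors the paper's argument essentially verbatim.
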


  \begin{proof}
    First, we must endow the set $I_H$ of capped chords with a total order on it. To do this, we use the \keywd{action filtration}, defined as
    \[ \scr A(\tilde x) := \int_{\bb D^2} \tilde x^* \omega - \int_{[0,1]} H_t(x(t))\, dt,  \]
    where $\bb D^2$ is the cap, and $\tilde x^* \omega$ is the pull-back of the symplectic form to the cap. This is independent of the choice of cap in its homotopy-cap class, by Stokes' theorem, and the fact that $L$ is Lagrangian. This action has the property that solutions to the Floer equation always decrease the action, except in the case that the strip is constant. Hence, \emph{we define} the total order on $I_H$ by setting $\tilde x < \tilde y$ whenever $\scr A(\tilde x) > \scr A(\tilde y)$. This is potentially ambiguous when we have distinct capped chords with the same action, in which case we order them arbitrarily. The action satisfies the property that 
    \[ \scr A(\tilde x') - \scr A(\tilde x) = \lambda \cdot (\mu(\tilde x') - \mu(\tilde x)), \quad \text{for some constant } \lambda > 0, \]
    which is an integer multiple of a positive real multiple $\lambda N_\mu$ of $N_\mu$, for any two capped chords $\tilde x', \tilde x$ representing the same chord $x$. This is due to the monotonicity assumption. A corollary of this observation is that $(I_H, <)$ is embeddable in $(\bb Z, <)$, because modulo $\lambda N_\mu$ there are only finitely many possible values of the action (since $\Phi_1(L) \cap L$ is compact). A similar argument shows that $\mu^{-1}(d)$ is finite for any $d \in \bb Z$.

    In light of this, and {\sc Thm.~\ref{thm:brok}} above due to Large, the only non-trivial part remaining to be established is the \emph{compactness of} $\scr M_{\tilde x, \tilde y}$ for $\mu(\tilde x) - \mu(\tilde y) < N_\mu$. By Gromov compactness, any sequence of solutions to the Floer equation has a convergent subsequence up to potentially broken strips, disk bubbles, and sphere bubbles. The effect of gluing a disk bubble on the Maslov index is adding a positive integer multiple of $N_\mu$, and likewise a positive integer multiple of $N_{2c_1}$ for a sphere bubble. Since $N_\mu \le N_{2c_1}$, this means that the Maslov grading difference of any strip obtained from gluing a bubble is at least $N_\mu$. (The edge case is when we glue a simple disk bubble, i.e.~ of Maslov index precisely $N_\mu$, to a constant strip.) So if $\mu(\tilde x) - \mu(\tilde y) < N_\mu$, then bubbles are excluded, and broken strips are the only possible phenomena that could break compactness. Since we added those in by {\sc Thm.~\ref{thm:brok}}, this ensures the compactness of $\scr M_{\tilde x, \tilde y}$, and completes the proof.
  \end{proof}

  \begin{RMK}
    The potential ambiguity in defining the ordering on $I_H$ for capped chords of the same action is harmless, since the independence of choices argument in {\sc\S\ref{ssec:indch}} will in particular also take care of this.
  \end{RMK}

  \begin{RMK}[Potential improvement]\label{rmk:impr}
    It is likely that the truncation degree can be improved from $N_\mu$ to $2N_\mu$, by the following trick, which was communicated to me privately by Mohammed Abouzaid. When gluing on a disk bubble, this can occur on either side of the strip. The new boundary strata that one needs to add in the range $\mu(\tilde x) - \mu(\tilde y) < 2N_\mu$ are those given precisely by gluing on a simple bubble, on either side. If these strata could be glued together, they would cancel and one would still get a traditional flow category, without any curvature phenomena. This gluing idea does not work as stated, but rather one should instead insert a \emph{cancelling cobordism}, by choosing paths generically inside $L$ connecting points on the $0$-side of the strip to points on the $1$-side of the strip, and considering the moduli space of disks with boundary on $L$, and a marked point living on this path. However, this will not actually improve our applications to $\bb{RP}^n \subset \bb{CP}^n$ of {\sc\S\ref{ssec:proj}}, so we do not pursue this idea further in the present paper.
  \end{RMK}

  \subsection{Index theory}\label{ssec:ind} In this subsection, we set up the index theory necessary to get a purely topological criterion for when $\scr F(M,L)$ admits a $U$-structure, cf.~ {\sc Def.~\ref{def:ubr}} below. We should mention that earlier work of Porcelli \cite{Porcelli}, and later Porcelli-Smith \cite{PoSm2}, use similar ideas. To understand when $U$-structures exist, one is naturally led to studying the parameter-space of all linearized Floer-type operators:

  \begin{DEF}[Category of non-degenerate APS ends]\label{def:aps}
    Slightly generalizing the (linearization of the) Floer equation \ref{eq:floer}, one can consider real Cauchy-Riemann operators on the strip $\bb R_s \times [0,1]_t$ with nondegenerate Atiyah-Patodi-Singer (APS) type asymptotic ends, i.e.~ the following data
    \begin{enumerate}
      \item A complex virtual bundle $E$ of rank 0 on $\bb R_s \times [0,1]_t$. We write $E$ for an actual vector bundle representing this virtual bundle, and understand this to be well-defined up to stabilization by trivial copies of $\bb C^n$;
      \item A totally real sub-bundle $F$ of the restriction of $E$ to $\bb R_s \times \{0,1\}$. Under stabilization of $E$ by $\bb C^n$, one correspondingly stabilizes $F$ by $\bb R^n$.
      \item\label{itm:linop} A $1^{\rm st}$-order linear differential operator $C^\infty(E,F) \to C^\infty(E)$ of the form $\partial_s + J\partial_t + A_s$, where $A_s$ is an $s$-parameter family of arbitrary $0^{\rm th}$ order terms. Under stabilization of $E$ and $F$, one subsequently stabilizes the $A_s$ by zero.
      \item Such that $A_{\pm \infty} := \lim_{s \to \pm \infty} A_s$ exist, the limit converges exponentially in all $C^k$ norms, and most crucially the operators $J \partial_t + A_{\pm \infty}$ on the interval $[0,1]$ are invertible (this is the so-called ``APS non-degeneracy condition.'') Note that stabilizations do not affect this condition, because $\bb R^n$ and $e^J \bb R^n$ inside $\bb C^n$ intersect transversely.
    \end{enumerate}
    There is a (topological) $\bb A_\infty$-category whose objects are given by APS-type non-degenerate operators on the interval $[0,1]$, as in the last item above, and whose morphisms are real Cauchy-Riemann operators on the strip, connecting two such APS-type operators at $\pm \infty$. The $\bb A_\infty$-composition is given by gluing of operators. (There are some contractible spaces of choices involved here, which is why this is an $\bb A_\infty$-category.) We call this category $\scr {APS}$.
  \end{DEF}

  \begin{PROP}
    The $\bb A_\infty$-category $\scr {APS}$ is in fact an $\bb A_\infty$-groupoid, and it is equivalent to the $\bb E_1 = \bb A_\infty$-algebra $\Omega(BO \times^h_{BU} BO)$, via path concatenation.
  \end{PROP}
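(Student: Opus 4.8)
The plan is to first pin down the homotopy type of the object space of $\scr{APS}$, and then identify the whole $\bb A_\infty$-structure with the based loop space of the resulting space, using that path concatenation of Cauchy--Riemann operators models composition in a loop space. The key observation is that the data in \textsc{Def.\,\ref{def:aps}} decomposes a Fredholm-type operator on the strip into a ``bulk'' part (a complex virtual bundle of rank $0$ over a contractible base, hence classified by a map to the zeroth space of $ku$, which deloops to $BU$) together with two totally real boundary conditions along $\bb R_s \times \{0\}$ and $\bb R_s \times \{1\}$, each classified by a map to $BO$, compatibly with the complexification map $BO \to BU$. The APS non-degeneracy condition at $\pm\infty$ is exactly what forces the operators $J\partial_t + A_{\pm\infty}$ to be invertible, which is the standing transversality needed to make the gluing/concatenation strictly unital and homotopy-associative.

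First I would show $\scr{APS}$ is an $\bb A_\infty$-groupoid: every morphism (a Cauchy--Riemann operator on the strip between two non-degenerate APS ends) is invertible up to coherent homotopy, since reversing the $s$-coordinate produces an inverse operator, and the gluing of an operator with its reverse is canonically homotopic to the identity operator (a linear interpolation of the $A_s$, staying within the non-degenerate locus because of item \textsc{iv}). This is the same mechanism by which a space of paths in a topological space forms an $\bb A_\infty$-groupoid.

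Next I would identify the object space. An object is an invertible operator $J\partial_t + A_\infty$ on $[0,1]$ acting on sections of $(E|_{[0,1]}, F|_{\{0,1\}})$, considered up to stabilization. By a standard contraction, such an operator is determined up to contractible choice by the pair consisting of: the complex bundle data (a point of the space $BU$, since rank-$0$ complex virtual bundles over a point form the space $\mathbb Z \times BU$ whose rank-$0$ component is $BU$), and the two totally real boundary conditions, each a point of $BO$ lying over the chosen point of $BU$. Invertibility of the operator is equivalent to a transversality of the two boundary Lagrangian-type conditions, which is precisely the condition that the two maps to $BO$ agree after composing to $BU$ — i.e.\ the object space is the homotopy fiber product $BO \times^h_{BU} BO$. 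I would make this rigorous by exhibiting the classifying map: an object gives a map $\mathrm{pt} \to BO \times^h_{BU} BO$ well-defined up to the contractible choices, and conversely any such point is realized by an operator, unique up to contractible choice.

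Finally, combining the two steps: $\scr{APS}$ is an $\bb A_\infty$-groupoid with object space weakly equivalent to $BO \times^h_{BU} BO$, and with morphism spaces given by (homotopy classes of) paths therein, the concatenation realizing the $\bb A_\infty$-composition. An $\bb A_\infty$-groupoid with contractible-up-to-choice space of objects over a connected space $X$ and paths as morphisms is equivalent, as an $\bb A_\infty$-algebra, to $\Omega X$ with loop concatenation; here $X = BO \times^h_{BU} BO$, giving the claim. The main obstacle I expect is the second step — rigorously matching invertibility of the APS operator on $[0,1]$ with the homotopy pullback condition, and tracking the stabilization equivalence so that the identification of object spaces is canonical rather than merely pointwise; the $\bb A_\infty$-coherences, while tedious, follow the well-trodden "operators glue like paths" pattern and should not pose a conceptual difficulty.
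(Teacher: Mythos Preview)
Your overall strategy matches the paper's: show the category is a groupoid by reversing strips, then identify it with the fundamental $\infty$-groupoid of $BO \times^h_{BU} BO$, whose endomorphism monoid at any basepoint is $\Omega(BO \times^h_{BU} BO)$. The paper phrases this as ``pass to a skeleton and compute the endomorphism $\bb E_1$-algebra directly,'' but the content is the same.

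There is, however, a genuine error in your identification of the object space. You write that ``invertibility of the operator is equivalent to a transversality of the two boundary Lagrangian-type conditions, which is precisely the condition that the two maps to $BO$ agree after composing to $BU$.'' This is wrong: the agreement of the two $BO$-classifying maps in $BU$ is \emph{automatic} from the very definition of a totally real sub-bundle --- one has $F_0 \otimes_{\bb R} \bb C \cong E \cong F_1 \otimes_{\bb R} \bb C$ by hypothesis --- and has nothing to do with invertibility. That automatic agreement is already what produces the homotopy pullback $BO \times^h_{BU} BO$. Invertibility of $J\partial_t + A$ is a genuinely different and non-vacuous constraint (for $A = 0$ it amounts to transversality of the two Lagrangians, which certainly fails for some pairs), and you give no argument for why imposing it does not alter the homotopy type. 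The paper handles this by noting that the $0^{\rm th}$-order term $A$ lives in a contractible space of choices and can therefore be discarded; without some argument of that kind, your identification of the object space is not justified, and the gap sits precisely at the step you yourself flagged as the ``main obstacle.''
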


  \begin{proof}
    Any two operators can be connected by a strip. By considering the reverse strip, and concatenating the two, it is not hard to see that the result can be deformed back to the identity. Thus, all objects are isomorphic, and hence the $\bb A_\infty$-category is equivalent to a skeleton, upon choosing some preferred base object. It remains to see what the $\bb E_1$-algebra of endomorphisms of this object is. The $0^{\rm th}$-order term of the operator lives in a contractible space of choices, and can safely be ignored. All the data that remains is the two bundles $E$ and $F$. We think of a strip as a path of intervals $[0,1]$, equipped with a complex bundle $E$, and two totally-real subspaces $F_0, F_1$ at the ends. Trivializing $E$ rel an endpoint (which can be done up to contractible space of choices), one can transport one of the totally real subspaces, e.g. $F_0$, to the other end, resulting in two different totally real subspaces $F_0, F_1 \subset E_1$. This is precisely the data of two real bundles, with a choice of isomorphism of their complexifications, i.e.~ a point in the fiber-product $BO \times^h_{BU} BO$. The result now follows.
  \end{proof}

  \begin{PROP}\label{prop:ouo}
    One has an isomorphism $\Omega(BO \times^h_{BU} BO) \cong O \times \Omega(U/O)$ of $\bb E_1$ (and in fact $\bb E_\infty$) groups.
  \end{PROP}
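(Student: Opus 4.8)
\emph{Proof strategy.} The plan is to reduce the whole statement to a single change-of-coordinates (``shearing'') move inside a biproduct of connective spectra. Since $\Omega$ preserves homotopy pullbacks and complexification $c\colon O\to U$ is a map of $\bb E_\infty$-groups — so that $BO\times^h_{BU}BO$ is a homotopy pullback of $\bb E_\infty$-groups, equivalently (via the equivalence with grouplike $\bb E_\infty$-spaces) of connective spectra — I would first identify
\[ \Omega\bigl(BO\times^h_{BU}BO\bigr)\;\simeq\; O\times^h_U O, \]
the homotopy fibre product of two copies of $c$, as $\bb E_\infty$-groups; here one uses $\Omega BO\simeq O$, $\Omega BU\simeq U$ and the fact that these equivalences and $Bc$ are infinite-loop.

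Next, for \emph{any} map $f\colon X\to Y$ of connective spectra one has $X\times^h_Y X={\rm fib}\bigl(X\oplus X\xrightarrow{\,f\circ{\rm pr}_1-f\circ{\rm pr}_2\,}Y\bigr)$; precomposing with the involutive automorphism $\sigma\colon X\oplus X\to X\oplus X$, $\sigma(a,b)=(a,a-b)$, turns the structure map into $f\circ{\rm pr}_2$, so that $X\times^h_Y X\simeq X\oplus{\rm fib}(f)$ as $\bb E_\infty$-groups. Taking $X=O$, $Y=U$, $f=c$, and using that the defining fibration $O\xrightarrow{c}U\to U/O$ of $\bb E_\infty$-groups extends one step to the left to a fibre sequence $\Omega(U/O)\to O\xrightarrow{c}U$, whence ${\rm fib}(c)\simeq\Omega(U/O)$, one concludes $\Omega(BO\times^h_{BU}BO)\simeq O\times\Omega(U/O)$ as $\bb E_\infty$-groups; restricting structure along $\bb E_1\to\bb E_\infty$ then also gives the $\bb E_1$-statement.

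The delicate point — and the reason to carry the argument out at the spectrum level — is the shearing step: $(a,b)\mapsto(a,a-b)$ is patently an equivalence of underlying spaces, but the corresponding self-map of $O\times O$ is \emph{not} a group homomorphism, so one cannot run this naively in spaces. In connective spectra (equivalently, grouplike $\bb E_\infty$-spaces) this evaporates, since $\oplus$ is a biproduct and $\sigma$ is an honest automorphism (matrix $\left(\begin{smallmatrix}1&0\\1&-1\end{smallmatrix}\right)$); and all the spaces in play ($O$, $U$, $U/O$, $\Omega(U/O)$) are infinite loop spaces, e.g.\ by real and complex Bott periodicity, with all structure maps (complexification, the two projections, the diagonal) infinite-loop, so this is legitimate. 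A reader preferring to stay in spaces may instead note that ${\rm pr}_1\colon O\times^h_U O\to O$ is a split surjection of topological groups with kernel ${\rm fib}(c)\simeq\Omega(U/O)$, whose residual conjugation action of $O$ factors through conjugation by $c(O)\subset U$ and is therefore inessential ($U$ being connected), making the semidirect product a direct product; but the spectrum-level argument is cleaner and simultaneously yields the $\bb E_\infty$-refinement.
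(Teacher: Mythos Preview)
Your proof is correct and follows essentially the same approach as the paper: both first reduce $\Omega(BO \times^h_{BU} BO)$ to $O \times^h_U O$ by commuting $\Omega$ with homotopy pullbacks, and then apply the shearing automorphism $(a,b)\mapsto(a,a-b)$ to identify this with $O \times {\rm fib}(c) \simeq O \times \Omega(U/O)$. If anything, you are more careful than the paper about why the shearing step respects the $\bb E_\infty$-structure (by passing to connective spectra so that $\sigma$ is an honest automorphism of a biproduct), a point the paper glosses over.
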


  \begin{proof}
    First, $\Omega(BO \times^h_{BU} BO) \cong O \times^h_U O$, which is purely formal due to the fact that in general $\Omega X = * \times^h_X *$, and homotopy-limits always commute. There is no ambiguity in whether we use the $\bb E_1$-structure coming from the loops, or the $\bb E_1$-structure coming from the $\bb E_\infty$-structure on $O$ and $U$, by the Eckmann-Hilton argument. Further, note that the difference endomorphism $(x, y) \mapsto (x, x - y)$ on $O \times O$ is an equivalence (since $O$ is group-like). Since the fiber product $O \times^h_U O$ is the homotopy-fiber of the difference map $O \times O \overset - \to O \to U$, it follows that $O \times^h_U O \cong O \times \Omega(U/O)$, where the $O$-component is just the projection onto the first factor, while the other is the witness in $\Omega(U/O) = {\rm Fib}(O \to U)$ of the fact that the two real bundles have the same complexification.
  \end{proof}

  \begin{DEF}[The index map]\label{def:ind}
    There is an associated \keywd{index map} 
    \[ {\rm Ind} : \scr{APS} \to \Omega^\infty KO,\]
    given by taking the index of the Cauchy Riemann operator with respect to the $L^2$-completion (recall, the space of Fredholm operators is a classifying space for K-theory, cf.~ Atiyah-J\"anich \cite{At,JanK}.) 
    
    The index is invariant under the stabilization in {\sc Def.~\ref{def:aps}-\ref{itm:linop}}, because the stabilization of $A_s$ by zero correspondingly stabilizes the Fredholm operator by an invertible operator. This process is an $\bb E_1$ map because concatenation of operators on strips results in the index bundles adding up. In light of the two propositions above, there is also an $\bb E_\infty$-structure on both sides, and the map also respects this, because the index is compatible with addition of bundles. By the Eckmann-Hilton argument, the two different operadic structures on this map must in fact coincide up to contractible homotopy, since the $\bb E_1$ path concatenation and the $\bb E_\infty$ addition of bundles respect each other.
  \end{DEF}

  \begin{PROP}
    The contribution of the $O$-factor (via the isomorphism proven above in {\sc Prop.~\ref{prop:ouo}}) under the index map is zero.
  \end{PROP}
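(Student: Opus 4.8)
The plan is to exploit the explicit description of the $O$-factor obtained in the proof of {\sc Prop.\,\ref{prop:ouo}}: under the chain $\scr{APS}\simeq\Omega(BO\times^h_{BU}BO)\cong O\times^h_U O\cong O\times\Omega(U/O)$, the $O$-factor is the image of the diagonal section $x\mapsto(x,x,\mathrm{const})$. Geometrically this means that an element of the $O$-factor is a real Cauchy--Riemann operator on the strip whose two totally real boundary sub-bundles $F_0$ (over $\mathbf R\times\{0\}$) and $F_1$ (over $\mathbf R\times\{1\}$) agree, whose complexification $E$ is correspondingly $t$-independent, and whose witness (the homotopy of complexifications across the strip) is constant. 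Equivalently, the entire datum is pulled back from the $s$-axis: a complex bundle $E_0\to\mathbf R$ (trivialized at the base object at $\pm\infty$), a totally real sub-bundle $F_0\subset E_0$, and a $t$-independent zeroth-order term, all twisted by one and the same element of $O$. The upshot I want to extract is that such an operator is \emph{pure gauge}: it is obtained from the base operator $D_*$ by conjugating with an orthogonal (hence unitary, hence complex-linear, hence $J$-preserving) bundle automorphism, and $D_*$ is invertible by the APS-nondegeneracy condition.

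First I would record that the strip operator $D_*$ attached to the base object with constant data is invertible: since the asymptotic operator $J\partial_t+A_*$ on $[0,1]$ with its boundary condition is invertible and the $s$-parameter family is constant, there is no spectral flow, so $\ker D_*={\rm coker}\,D_*=0$ and hence ${\rm Ind}(D_*)=0\in KO^0$ (the basepoint of $\Omega^\infty KO$). Next I would check that conjugation by an orthogonal gauge transformation $g$ --- available precisely because, on the $O$-factor, the twists of $F_0$, of $F_1$, and of $E$ are all controlled by the same element of $O$ --- carries $D_*$ to an operator of the same APS type (the conjugated asymptotic operators stay invertible, conjugation by a constant unitary preserving invertibility, and stabilizations are harmless as in {\sc Def.\,\ref{def:aps}}), with again vanishing kernel and cokernel. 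Therefore the tautological index bundle over the whole $O$-factor is the zero bundle, so the composite $O\hookrightarrow\scr{APS}\xrightarrow{\;{\rm Ind}\;}\Omega^\infty KO$ is the constant map at $0$; being $\bb E_1$- (indeed $\bb E_\infty$-)natural, this constant map is the null homomorphism, which is the assertion.

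The step I expect to be the main obstacle is making the gauge-theoretic identification of the $O$-factor precise and functorial over all of $O$ rather than merely pointwise --- in particular, handling the non-identity component of $O$, where there is no continuous choice of path to the identity inside $O(n)$, forcing one either to argue after stabilization or, better, to phrase the conclusion directly in terms of the tautological index bundle whose fibres $\ker$ and ${\rm coker}$ remain identically zero irrespective of any choice --- while verifying throughout that one never leaves the category $\scr{APS}$. A cleaner alternative that avoids the path-selection issue is to use the $\bb E_\infty$-naturality of ${\rm Ind}$ from {\sc Def.\,\ref{def:ind}}: it makes ${\rm Ind}|_{O\text{-factor}}$ an infinite loop map, hence a map of connective spectra $\mathbf o\to KO$ with $\Omega^\infty\mathbf o=O$, and it then suffices to see that this map vanishes on each $\pi_k O=\pi_{k+1}KO$; but any class in $\pi_k O$ is realized by a $k$-sphere's worth of orthogonal gauge twists of the invertible operator $D_*$, whose index bundle over $S^k$ is visibly zero since $\ker$ and ${\rm coker}$ stay zero, so the spectrum map is null.
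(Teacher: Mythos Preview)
Your approach is genuinely different from the paper's: the paper argues by gluing a standard \emph{thimble} (a disk operator capping one APS end) to the strip, and uses the vanishing of the $\Omega(U/O)$-factor to deform the glued object back to the standard thimble, witnessing that the strip contributes zero index. It cites the ``framed brane structure'' literature (Large, Blakey) for this. Your idea --- pick invertible representatives for the $O$-factor and read off that the family index bundle is identically zero --- is a reasonable alternative strategy, but the execution has a real gap.

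The problem is your identification of the $O$-factor with ``orthogonal gauge twists'' of $D_*$. An element $g\in O(n)\subset U(n)$ \emph{stabilizes} the standard Lagrangian $\bb R^n\subset\bb C^n$, so conjugating $D_*$ by any orthogonal gauge (whether $s$-dependent or not, based or not) leaves the boundary Lagrangians equal to $\bb R^n$; the resulting loop in $BO$ is constant, and you have only produced the \emph{trivial} element of the $O$-factor, not an arbitrary $a\in O$. Said differently: what makes $a\in O=\Omega BO$ nontrivial is that the abstract real bundle $F$ on $S^1=\bb R\cup\{\infty\}$ is nontrivial, and a gauge transformation is by definition a bundle \emph{automorphism}, so it cannot change the isomorphism class of $F$. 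If instead you try to trivialize $F$ over $\bb R$ (which is contractible) and push the nontriviality to the ends, the gauge is no longer based at $\pm\infty$, and the conjugated operator is a morphism between different objects of $\scr{APS}$; you cannot then conclude it represents $a$ in ${\rm End}(\scr H_0)$ without re-composing with the end isomorphisms, which reintroduces exactly the index contribution you are trying to control. Your backup argument via $\pi_k$ inherits the same flaw, since it again appeals to the gauge-twist description.

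A salvage is possible: one can choose, continuously in $a\in O$, representatives with $F_0^s=F_1^s$ equal to a common $s$-dependent Lagrangian and a zeroth-order term making the $t$-operator invertible for every $s$ (so there is no spectral flow and the strip operator is invertible). But this requires an argument that is not ``conjugate $D_*$ by something in $O$'', and you would still need to justify invertibility of the full strip operator, not just of each $t$-slice. The paper's thimble argument sidesteps all of this.
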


  \begin{proof}
    Put differently, if the projection to the $U/O$ factor is zero, we must show that the index is canonically zero. This has already been done in the literature, under the slightly different language of ``framed brane structures,'' cf.~ \cite{Large} and \cite[\sc Thm.\,1.7]{Blakey}. The idea is that one can instead glue a standard \emph{thimble} at one end, and because the difference element in $\Omega(U/O)$ is zero, one can canonically deform the operator on the glued thimble + strip, to get back to the standard thimble operator, hence witnessing that the index is zero. This can be done compatibly with strip concatenation and vector bundle addition.
  \end{proof}

  \begin{DEF}
    The \keywd{reduced index map} ${\rm Ind} : \Omega(U/O) \to \Omega^\infty KO$ is the old index map, factored through projection onto the second factor under the isomorphism of {\sc Prop.~\ref{prop:ouo}}.
  \end{DEF}

  \begin{THM}[Index theorem]\label{thm:ind}
    The reduced index map is an equivalence of $\bb E_\infty$-groups.
  \end{THM}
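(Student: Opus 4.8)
The plan is to identify both $\bb E_\infty$-groups with known spaces and check that the reduced index map realizes a standard equivalence. First I would recall that $\Omega(U/O) \simeq \Omega(\Omega^{-1}(KO)/\mathrm{(something)})$ via Bott periodicity: the homotopy fiber sequence $O \to U \to U/O$ deloops the inclusion $O \to U$, and by the real and complex Bott periodicity theorems, $U/O \simeq \Omega^{-1}(\mathbb Z \times BO)$ in the range that matters after looping once more — more precisely, $\Omega(U/O) \simeq \Omega^6 KO \simeq \Omega^{-2} KO$ as infinite loop spaces, so $\Omega(U/O)$ represents $KO$ up to a shift. Concretely, the classical statement (see Bott, or Atiyah--Bott) is that $U/O$ is the $2$-fold delooping of $\mathbb Z \times BO$ in $KO$-theory, whence $\Omega(U/O) \simeq \Omega^\infty \Sigma^{-2} KO \simeq \Omega^\infty KO$ after accounting for periodicity; the point is simply that $\Omega(U/O)$ is \emph{some} space in the $KO$-spectrum, so the target $\Omega^\infty KO$ is abstractly equivalent to it. The real content is that the reduced index map is \emph{this} equivalence, not merely an abstract one.

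The key steps, in order: (i) Reduce to computing the induced map on homotopy groups, since both sides are infinite loop spaces (group-like $\bb E_\infty$), so a map inducing isomorphisms on all $\pi_n$ is automatically an equivalence. (ii) Compute $\pi_n$ of the source: using Proposition~\ref{prop:ouo} and the fiber sequence $\Omega(U/O) \to O \to U$, one gets $\pi_n\Omega(U/O)$ from the long exact sequence of $O \to U$, i.e. from the well-known relative homotopy $\pi_{n+1}(U,O)$, which is $8$-periodic and matches $\pi_n KO = \mathbb Z, \mathbb Z/2, \mathbb Z/2, 0, \mathbb Z, 0, 0, 0$. (iii) Show the index map hits a generator in each degree. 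This is the crux: one must exhibit, for each $n$, an $n$-parameter family of Cauchy--Riemann operators on the strip whose index bundle generates $\pi_n KO$, and verify it corresponds under the identifications to a generator of $\pi_n \Omega(U/O)$. The cleanest route is to recognize that the construction of the reduced index map is precisely the "polarization/thimble" comparison: a class in $\Omega(U/O)$ is the datum of a totally real sub-bundle together with a witnessed trivialization of its complexification up to conjugation, and gluing on a thimble converts a Cauchy--Riemann operator on the strip into one on the disk whose kernel/cokernel compute the reduced index; this is exactly the setup in Large \cite{Large} and Blakey \cite[\sc Thm.\,1.7]{Blakey}, where the relevant family index is computed.

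The main obstacle I expect is step (iii): pinning down that the index map is \emph{degree one} on each homotopy group rather than multiplication by some integer. Abstractly one knows source and target are both $\Omega^\infty$ of ($KO$ up to shift), but an $\bb E_\infty$-map between them could a priori be multiplication by $k$ on $\pi_0$ (hence not an equivalence). To rule this out I would argue at the level of $\pi_0$ and $\pi_1$ by an explicit small-dimensional computation — e.g. a loop of totally-real subspaces of $\mathbb C$ of winding number one produces a Cauchy--Riemann operator on the disk of index exactly $1$ — and then bootstrap to all degrees using the $\bb E_\infty$-multiplicativity together with Bott periodicity (the periodicity generator in $\pi_8 KO$ is a product of low-degree generators, and the index map respects products since concatenation/external sum of operators corresponds to addition of index bundles, as recorded in Def.~\ref{def:ind}). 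An alternative, perhaps slicker, approach: compare the reduced index map directly with the classical identification of $U/O$ with $B(\mathbb Z \times BO)$ in $KO$-theory by interpreting the family of totally real sub-bundles as Lagrangian sub-bundles of a symplectic bundle and invoking the standard symplectic-topological description of the $KO$-orientation of the Cauchy--Riemann index (as in Seidel's book or in \cite{PoSm2}); this bypasses explicit generators but requires more setup.
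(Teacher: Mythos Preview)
Your approach is in the right spirit but takes a noticeably longer route than the paper, and the bootstrapping step has a gap. The paper's argument is a two-line trick: it constructs an explicit right-inverse $\Omega^\infty KO \to \Omega(U/O)$ by $V \mapsto V \otimes D_0$, where $D_0$ is any single operator of numerical index $1$ (your bigon example), and observes that ${\rm Ind}(V \otimes D_0) = V \otimes {\rm Ind}(D_0) \cong V$ by additivity of the index. This already shows the reduced index map is a split surjection on all $\pi_n$. Then the paper invokes the elementary fact that the groups $0$, $\bb Z/2$, $\bb Z$ (which are the only ones appearing in $\pi_* KO$) cannot be proper direct summands of themselves, so a split surjection between them is an isomorphism. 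No degree-by-degree check, no Bott periodicity bootstrapping.

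The gap in your argument is the bootstrapping step. You write that the periodicity generator in $\pi_8 KO$ is a product of low-degree generators and that the index map respects products; but the $\bb E_\infty$-structure under which ${\rm Ind}$ is a map of groups is the \emph{additive} one (direct sum of bundles / concatenation of operators), not the tensor product. Knowing ${\rm Ind}$ is an iso on $\pi_0$ and $\pi_1$ and is additive does not by itself propagate to $\pi_4$ or $\pi_8$; you would need to separately argue compatibility with the multiplicative Bott element, which you have not done. The paper sidesteps this entirely: the tensor-with-$D_0$ construction is what encodes the needed multiplicativity, and the ``no proper self-summand'' observation replaces your degree-by-degree surjectivity check in one stroke. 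Your explicit $\pi_0$ computation is exactly the construction of $D_0$, so you were one move away from the short proof.
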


  \begin{proof}
    It suffices to prove that this is a split surjection, since their $\pi_*$ (which are either $0$, $\bb Z/2$ or $\bb Z$) have the remarkable property that they can only be a summand of themselves via the identity. Hence, all we have to do is provide a right-inverse. To this end, pick some element in the left-hand side whose index has dimension 1. (Such an element exists, e.g. consider the obvious holomorphic disk bounded by a bigon in $\bb C$, which has Maslov index 1.) Call this operator $D_0$. Then, construct a map $\Omega^\infty KO \to \Omega(U/O)$ that sends a bundle $V$ to the parameter-family of operators $V \otimes D_0$. The latter makes sense because locally it is just a direct sum of copies of $D_0$. The index of this tensor product is ${\rm Ind}(D_0) \otimes V \cong \bb R \otimes V \cong V$, again because of the additivity of the index. This proves the claim.
  \end{proof}

  \subsection{\texorpdfstring{$U$}{U}-branes and \texorpdfstring{$U$}{U}-structures}\label{ssec:ustruc} Now that we have a somewhat algebraic expression for the index, it is natural to wonder under what conditions one can lift the index $\bb E_1$-map 
  \[ \scr M_{*,*} \to \scr {APS} \overset{\rm Ind}\longrightarrow \Omega^\infty KO \overset{\rm forg\; dim}\longrightarrow BO\]
  to $BU$, hence obtaining a $U$-structure on $\scr F(M,L)$. To understand this, one must choose an isomorphism for every object in $\scr{APS}$, connecting it back to to the basepoint (which one may recall is the procedure for producing a skeleton, which allowed us to say that $\scr {APS} \cong O \times \Omega(U/O)$.) After pre/post composing with these isomorphisms, we get a new map $\scr M_{*,*} \to O \times \Omega(U/O) \subset \scr {APS}$, given by sending a solution $u$ of the Floer equation to the gluing thereof with some other abstract strips $v_{\tilde x}, v_{\tilde y}$ connecting the APS-type ends $\scr H_{\tilde x}$ and $\scr H_{\tilde y}$ of $\tilde x$ and $\tilde y$, to some base APS-end $\scr H_0$:
  \[ \hat u : \scr H_0 \overset{v_{\tilde x}}\rightsquigarrow \scr H_{\tilde x} \overset u \rightsquigarrow \scr H_{\tilde y} \overset{v_{\tilde y}^{-1}}\rightsquigarrow \scr H_0 \]
  inside $\scr{APS}$. The index changes as a result of this, by adding the indices of $v_{\tilde x}$ and $v_{\tilde y}^{-1}$. Thus,
  \[\numeq\label{eq:indcorr} {\rm Ind}(\hat u) = {\rm Ind}(v_{\tilde x}) + {\rm Ind}(u) - {\rm Ind}(v_{\tilde y}). \]
  The crucial observation is that the terms ${\rm Ind}(v_{\tilde x})$ and $- {\rm Ind}(v_{\tilde y})$ that we have chosen are \emph{constant} over the moduli space $\scr M_{\tilde x, \tilde y}$, since they only depend on the asymptotic APS-type ends $\scr H_0, \tilde x, \tilde y$. Thus, when forgetting the dimension via the map $\Omega^\infty KO \to BO$, the result does not change, despite introducing these extra paths $v_{\tilde x}, v_{\tilde y}^{-1}$.

  \begin{RMK}
    It might be confusing how replacing an $\bb A_\infty$-category with an equivalent one (in our case, the skeleton), changes the index. This is due to the fact that equivalences of categories are not truly isomorphisms, but only satisfy $g \circ f = {\rm id}$ and $f \circ g = {\rm id}$ up to natural isomorphisms in the category we are working with. In $\Omega^\infty KO$, every morphism is an isomorphism, because adding a virtual bundle is always a reversible process. Nevertheless, we were lucky enough that the extra index bundles that show up are constant, and so when forgetting the dimension they become zero. We also point out that, had one chosen different isomorphisms $v_{\tilde x}'$ between $\scr H_0$ and $\scr H_{\tilde x}$, or perhaps a different $\scr H_0'$ altogether, the result would still not change after forgetting the dimension of the virtual bundle.
  \end{RMK}

  We have thus proven the following:

  \begin{PROP}
    The (dimensionless) index map $\scr M_{*,*} \to BO$ can be computed as the composite
    \[ \scr M_{*,*} \to \Omega(U/O) \overset\sim \to \Omega^\infty KO \to BO, \]
    where the first map is the difference element of the two Lagrangian subspaces on the two ends of the strip, the second one comes from our Index Theorem {\sc\ref{thm:ind}}, and the third one is the map that forgets the dimension.
  \end{PROP}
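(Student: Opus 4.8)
The plan is to unwind the definition of the dimensionless index map $\scr M_{*,*}\to\scr{APS}\overset{\rm Ind}\to\Omega^\infty KO\to BO$ and rewrite it through the skeleton $O\times\Omega(U/O)$ of $\scr{APS}$ identified in {\sc Prop.\,\ref{prop:ouo}} (and the proposition preceding it). Concretely, I would first choose, for every APS-end $\scr H_{\tilde x}$ occurring as an asymptote of a capped chord $\tilde x$, an isomorphism $v_{\tilde x}\colon\scr H_0\rightsquigarrow\scr H_{\tilde x}$ in $\scr{APS}$ to a fixed base-end $\scr H_0$. Since $\scr{APS}$ is an $\bb A_\infty$-groupoid and there are only countably many capped chords, each such choice ranges over a contractible space, so all of them can be fixed at once without coherence trouble. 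Conjugating a cap-compatible Floer solution $u$ from $\tilde x$ to $\tilde y$ by these yields $\hat u:=v_{\tilde y}^{-1}\circ u\circ v_{\tilde x}$, an automorphism of $\scr H_0$, i.e.\ a point of the skeleton $O\times\Omega(U/O)$; this assembles into a map $\scr M_{*,*}\to O\times\Omega(U/O)\subset\scr{APS}$.

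The heart of the argument is then the index comparison \ref{eq:indcorr}: since ${\rm Ind}$ turns strip-concatenation into addition of index bundles (an $\bb E_1$-property, cf.\ {\sc Def.\,\ref{def:ind}}), one has ${\rm Ind}(\hat u)={\rm Ind}(v_{\tilde x})+{\rm Ind}(u)-{\rm Ind}(v_{\tilde y})$ in $\Omega^\infty KO$. The two correction terms depend only on the asymptotic ends $\scr H_0,\scr H_{\tilde x},\scr H_{\tilde y}$, hence are \emph{constant} over each component $\scr M_{\tilde x,\tilde y}$; so ${\rm Ind}(\hat u)$ and ${\rm Ind}(u)$ differ by a constant virtual bundle of constant rank, and therefore have equal image in $BO$ after the forget-dimension map. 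Thus the dimensionless index of $\scr M_{*,*}$ may be computed equally well along $u\mapsto\hat u\mapsto{\rm Ind}(\hat u)\mapsto BO$. Now $\hat u$ lives in $O\times\Omega(U/O)$, and by the Proposition above showing that the $O$-factor contributes canonically zero (the thimble-gluing trick), this composite factors through the projection $O\times\Omega(U/O)\to\Omega(U/O)$ --- which by the proof of {\sc Prop.\,\ref{prop:ouo}} is exactly the ``difference element'' of the two totally real sub-bundles on the ends of the strip. Finally, ${\rm Ind}$ restricted to the $\Omega(U/O)$-factor is by definition the reduced index map, an equivalence $\Omega(U/O)\overset\sim\to\Omega^\infty KO$ by the Index Theorem {\sc\ref{thm:ind}}; composing with $\Omega^\infty KO\to BO$ produces precisely the asserted factorization $\scr M_{*,*}\to\Omega(U/O)\overset\sim\to\Omega^\infty KO\to BO$.

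The step I expect to require the most care is the constancy of the correction bundles \emph{in families}: one must verify not merely that ${\rm Ind}(v_{\tilde x})$ is a fixed virtual bundle for each $\tilde x$, but that, with the $v_{\tilde x}$ fixed once and for all, the identity for ${\rm Ind}(\hat u)$ holds over all of $\scr M_{\tilde x,\tilde y}$ and is compatible with the corner identifications of the flow category --- which is exactly what the $\bb E_1$-coherence of ${\rm Ind}$ from {\sc Def.\,\ref{def:ind}} supplies. Everything else is bookkeeping with equivalences already established.
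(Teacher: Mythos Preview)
Your proposal is correct and follows essentially the same route as the paper: the paper presents this proposition as a summary of the preceding discussion (introduced by ``We have thus proven the following''), which is exactly your argument --- choose isomorphisms $v_{\tilde x}$ to a base end $\scr H_0$, conjugate $u$ to $\hat u$, use the additivity \ref{eq:indcorr} to see that the correction terms are constant over each $\scr M_{\tilde x,\tilde y}$ and hence vanish after forgetting dimension, and then invoke the vanishing of the $O$-factor to factor through $\Omega(U/O)$. Your explicit attention to $\bb E_1$-coherence in families is a welcome addition that the paper leaves implicit.
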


  However, in light of the previous remark, the dimensions of the index bundles cannot be computed in this way, and one must use the correction terms appearing in \ref{eq:indcorr}. This is essentially what the Conley-Zehnder index does, although we do not explain this further here. Finally, the key to obtaining $U$-structures is the following result:

  \begin{PROP}\label{prop:indbott}
    There is a commutative diagram of $\bb E_1$-maps
    \[ \begin{tikzcd}
      \Omega U \rar["\sim"]\rar[swap, "\rm Bott"]\dar & \Omega^\infty KU \dar \rar & BU \dar \\
      \Omega (U/O) \rar["\sim"]\rar[swap, "\rm Ind"] & \Omega^\infty KO \rar & BO
    \end{tikzcd} \]
    where the unlabelled maps are the straightforward ones.
  \end{PROP}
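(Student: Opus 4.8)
The plan is to verify the two squares separately, dispatching the right-hand one by a short formal computation and concentrating the real work on the left-hand one. As a preliminary reduction I would observe that all six spaces in the diagram are infinite loop spaces and every map is an $\bb E_\infty$-map --- this is immediate for the three ``straightforward'' arrows and for ${\rm Bott}$, and it is the content of {\sc Thm.\,\ref{thm:ind}} for ${\rm Ind}$ --- so it is enough to prove commutativity as a diagram of spaces, the $\bb E_1$-coherence then being automatic. For the right square, the horizontal arrows are the ``forget the virtual rank'' maps $\Omega^\infty KU = \bb Z \times BU \to BU$, $[V] \mapsto [V] - {\rm rk}\,V$, and similarly over $\bb R$, while the vertical realification map is $B\rho$ for $\rho : U \to O$ on the $BU \to BO$ factor and multiplication by $2$ on ranks. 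Both ways around the square send $[V] \in KU^0$ to $[V]_{\bb R} - {\rm rk}_{\bb R}[V]_{\bb R}$, using ${\rm rk}_{\bb R}[V]_{\bb R} = 2\,{\rm rk}_{\bb C}V$, so this square commutes on the nose and compatibly with the $\bb E_\infty$-structures.

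For the left square, the approach I would take is to realize ${\rm Bott}$ itself as a \emph{complex} counterpart of the index map of {\sc Def.\,\ref{def:ind}}, so that the square becomes the statement that forming the analytic index commutes with realification. Concretely, I would rerun the construction of {\sc Def.\,\ref{def:aps}}--{\sc Thm.\,\ref{thm:ind}} in the complex-linear setting --- families of Cauchy--Riemann operators on the closed sphere, where the role played by the polarization $\bb E_1$-group $\Omega(U/O)$ of {\sc Prop.\,\ref{prop:ouo}} is taken instead by $\Omega U$ --- so that the corresponding index map $\Omega U \to \Omega^\infty KU$ is precisely ${\rm Bott}$, by the familiar index-theoretic proof of complex Bott periodicity. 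Under these identifications the canonical map $\Omega U \to \Omega(U/O)$ becomes a forgetful/reflection map passing from complex to totally real data, while realification $r : KU \to KO$ acts on index bundles by forgetting their complex structure; commutativity of the left square is then exactly the assertion that the index of the realified family equals the realification of the index, which is the same additivity/functoriality of ${\rm ind}$ already invoked in {\sc Def.\,\ref{def:ind}} to make ${\rm Ind}$ an $\bb E_1$-map.

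The hard part will be upgrading ``the square commutes up to an a priori unspecified equivalence'' to ``the square commutes with exactly the named $\bb E_1$-maps.'' There are two sources of slack to pin down. First, the skeleton/basepoint choices --- a base APS-type end $\scr H_0$ together with isomorphisms connecting every object to it --- must be made compatibly on the complex and the real sides; this is the same bookkeeping as in \S\ref{ssec:ustruc} and the remark there, but now carried out for both index maps simultaneously. Second, the normalization constants (most visibly ${\rm rk}_{\bb R} = 2\,{\rm rk}_{\bb C}$ and the corresponding doubling $\pi_1 U \to \pi_1(U/O)$) must be tracked carefully enough to confirm that the left vertical arrow is genuinely $\Omega$ of the canonical map $U \to U/O$ rather than a twist of it. As a cross-check that these normalizations come out right, I would note that the commutativity being claimed is the classical compatibility of complex and real Bott periodicity under realification, which also follows abstractly from $r : KU \to KO$ being a map of $\bb E_\infty$-ring spectra together with the multiplicative characterization of the Bott elements; this gives an independent confirmation without redoing the index-theoretic bookkeeping.
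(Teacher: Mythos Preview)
Your treatment of the right square matches the paper's (it simply says ``clearly commutative''), and your preliminary remark that it suffices to check commutativity at the level of spaces is fine for the paper's purposes, since the subsequent remark in the paper explicitly notes that the witnessing homotopy produced is non-canonical anyway.

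For the left square the paper takes a quite different route. Rather than building a complex analogue of $\scr{APS}$ and invoking ``index commutes with realification,'' it replaces ${\rm Bott}$ and ${\rm Ind}$ by their \emph{inverses}, both of which have explicit tensor-product descriptions: ${\rm Bott}^{-1}$ sends $V \mapsto V \otimes [\scr O(1) - 1]$ (viewing $\Omega U$ as $\Omega^2 BU$), and ${\rm Ind}^{-1}$ sends $V \mapsto V \otimes D_0$ for the Maslov-index-$1$ operator $D_0$ chosen in the proof of {\sc Thm.\,\ref{thm:ind}}. Additivity in $V$ reduces the reversed square to the single case $V = \bb C$, and since $\pi_0 \Omega^\infty KO = \bb Z$ is detected by dimension, it then suffices to compute one number: the real index of the strip operator associated to $\scr O_{\bb C}(1)$. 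A winding-number argument shows this is $2$, matching $\dim_{\bb R} \bb C \cdot {\rm ind}(D_0) = 2 \cdot 1$. This buys a very short proof at the cost of a non-canonical homotopy (as the paper itself concedes).

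Your approach is more conceptual and would, if carried through, likely yield a canonical homotopy; but there is real work hiding in the phrase ``forgetful/reflection map passing from complex to totally real data.'' The complex index picture naturally lives over the closed sphere (no boundary, no totally real sub-bundle), while the real one lives over the strip with Lagrangian boundary conditions; exhibiting a geometric comparison map between these two families of operators and then identifying it with $\Omega$ of the quotient $U \to U/O$ is not automatic and is exactly the step the paper's shortcut avoids. Your cross-check via $r : KU \to KO$ being an $\bb E_\infty$-ring map is also not quite independent: it presupposes that the analytically defined ${\rm Ind}$ of {\sc Def.\,\ref{def:ind}} agrees with the standard real Bott equivalence $\Omega(U/O) \simeq \Omega^\infty KO$, which has not been established in the paper and is essentially part of what the proposition is asserting.
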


  \begin{proof}
    The right square is clearly commutative. The key to proving the commutativity of the left square is to actually prove the commutativity where ${\rm Bott}$ and ${\rm Ind}$ are replaced by their inverses, which have explicit algebraic formulas. Namely ${\rm Bott}^{-1}$ is given by viewing $\Omega U$ as $\Omega^2 BU$, and sending a virtual vector space $V$ to $V \otimes [\scr O(1) - 1]$, where $\scr O(1)$ is the anti-tautological bundle on $S^2$. The bottom map ${\rm Ind}^{-1}$, as explained in the proof of the Index Theorem {\sc\ref{thm:ind}}, is given by sending a virtual vector space $V$ to $V \otimes D_0$. Since the tensor product commutes with direct sums, it suffices to prove the commutativity of the reverse diagram for $V = \bb C$. Two points are path-connected in $\Omega^\infty KO$ if and only if they have the same dimension, so the proof further reduces to computing the index numerically of the strip associated to $\scr O_\bb C(1)$, and showing that it is equal to $2$. 
    
    To this end, we use the formula for the index of a strip as the winding number of the Lagrangian around the boundary of the strip, upon symplectically trivializing the complex bundle over it. The complex line bundle $\scr O(1)$ over the sphere can be thought of as a complex bundle over the disk, trivialized on the boundary. There is another trivialization of this $\scr O(1)$ from the contractibility of the disk instead, and the induced difference on the boundary is the generator of $\pi_1 U$. If we track the Lagrangian, we can see that it has winding number $2$ (e.g. imagine the line inside $\bb R^2 \cong \bb C$, rotating $360^\circ$; it intersects the horizontal direction exactly twice.) This concludes our proof.
  \end{proof}

  \begin{RMK}
    We note here that the homotopy we produced witnessing the commutativity of the left square is quite non-canonical, since we invoked the fact that two points in $\Omega^\infty KO$ are connected by a path if and only if they have the same dimension. Hence, there is a non-contractible space of choices involved in this. However, one can just pick one and stick with it. We suspect that the argument can be refined to get a canonical such choice, but this does not really matter for the purposes of this paper, since all we care about is existence of a $U$-structure.
  \end{RMK}

  As a corollary, we get

  \begin{THM}[$U$-structures in Lagragian theory]\label{thm:ustruc}
    The space $L \times^h_M L$ of paths in $M$ with endpoints in $L$ has a map $L \times^h_M L \to U/O$ given by forming the difference element of the two Lagrangian subspaces. Its post-composition with the connecting map $U/O \to BO$ arising from the principal fibration $O \hookrightarrow U \to U/O$ recovers the difference $[{\rm pr}_1^* TL] - [{\rm pr}_2^* TL]$ as virtual bundles, where ${\rm pr}_i : L \times^h_M L \to L$ are the projections onto the two factors. Any null-homotopy of this map $L \times^h_M L \to BO$ induces a $U$-structure on $\scr F(M,L)$. {\sc(Def.~\ref{def:g-flow})}
  \end{THM}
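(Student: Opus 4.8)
The plan is to assemble the $U$-structure on $\scr F(M,L)$ from the index-theoretic machinery developed above, by identifying the dimensionless index $\bb E_1$-map $\scr M_{*,*} \to BO$ with a pullback of the universal difference map on $L \times^h_M L$, and then observing that a null-homotopy of the latter supplies the required lift to $BU$.

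\textbf{Step 1: The universal difference map and its geometric meaning.} First I would construct the map $L \times^h_M L \to U/O$. A point of $L\times^h_M L$ is a path $\gamma$ in $M$ with $\gamma(0),\gamma(1)\in L$; pulling back $TM$ along $\gamma$ and trivializing it symplectically (contractible choice), the two Lagrangian subspaces $T_{\gamma(0)}L$ and $T_{\gamma(1)}L$ become two Lagrangian planes in a fixed symplectic vector space, hence — after transporting one to the other endpoint — a point of $U/O$ (the space of Lagrangians, equivalently pairs of real forms of a fixed complex space with fixed complexification, i.e. the $\Omega(U/O)$-style data but now unlooped). This is exactly the ``difference element'' construction already used fiberwise in {\sc Def.\,\ref{def:aps}} and Proposition \ref{prop:ouo}; here it is performed on the path space rather than the strip. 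Then I would identify the composite $L\times^h_M L \to U/O \to BO$, where $U/O\to BO$ is the connecting map of $O\hookrightarrow U\to U/O$: chasing the fibration, the connecting map classifies exactly the formal difference of the two real bundles whose complexifications were identified, which on $L\times^h_M L$ are $\mathrm{pr}_1^*TL$ and $\mathrm{pr}_2^*TL$. So the composite is $[\mathrm{pr}_1^*TL]-[\mathrm{pr}_2^*TL]$ as a virtual bundle; this is a short K-theoretic bookkeeping argument.

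\textbf{Step 2: Factoring the flow-category index map through $L\times^h_M L$.} Next I would recall that evaluation at the two asymptotic ends gives a natural map $\scr M_{*,*}\to L\times^h_M L$ (a cap-compatible Floer strip from $\tilde x$ to $\tilde y$, together with the capping data, determines a homotopy class of path in $M$ with ends on $L$, namely the boundary arc on $L$; compatibility with the corner/gluing structure makes this an $\bb E_1$-map with respect to concatenation of paths). Composing with the map $L\times^h_M L\to U/O$ of Step 1 recovers precisely the ``difference element of the two Lagrangian subspaces on the two ends of the strip'' that appears as the first map in the Proposition just before \ref{prop:indbott}. That Proposition then identifies the dimensionless index $\scr M_{*,*}\to BO$ with $\scr M_{*,*}\to\Omega(U/O)\xrightarrow{\sim}\Omega^\infty KO\to BO$; but the unlooped analogue says equally that $\scr M_{*,*}\to BO$ equals $\scr M_{*,*}\to L\times^h_M L\to U/O\to BO$. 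Hence $N_{*,*}$ (or rather $-\tilde T_{*,*}$, via the remark equating the two) is, as an $\bb E_1$-map to $BO$, pulled back from the map $f\colon L\times^h_M L\to BO$ of the statement.

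\textbf{Step 3: A null-homotopy of $f$ yields a $U$-structure.} Finally, Proposition \ref{prop:indbott} exhibits $U/O\to BO$ as sitting in a commutative square under $BU\to BO$, with $\Omega U\to\Omega(U/O)$ on the looped level; unlooping, the relevant statement is that the connecting map $U/O\to BO$ canonically lifts to $BU$ after choosing a null-homotopy — concretely, a null-homotopy of the composite $L\times^h_M L\to U/O\to BO$ is the same as lifting through the homotopy fiber, and the diagram of Proposition \ref{prop:indbott} identifies the relevant homotopy fiber with (a delooping datum landing in) $BU$, producing an $\bb E_1$-lift $\scr M_{*,*}\to BU$ of $N_{*,*}\to BO$, i.e. a stable normal $U$-structure in the sense of {\sc Def.\,\ref{def:g-flow}}. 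One must check this lift is coherent with the $\bb E_1$-structure (concatenation on $L\times^h_M L$ versus the multiplication on $U/O$, $BU$, $BO$), which follows since every map in sight has already been arranged to be an $\bb E_1$-map.

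\textbf{Main obstacle.} The genuinely delicate point is Step 2: making the evaluation map $\scr M_{*,*}\to L\times^h_M L$ strictly $\bb E_1$ and verifying that, under it, the flow-category index map really is the pullback of $f$ — i.e. that the ``correction terms'' $\mathrm{Ind}(v_{\tilde x})-\mathrm{Ind}(v_{\tilde y})$ of \eqref{eq:indcorr} genuinely disappear after forgetting dimension \emph{coherently over all corners simultaneously}, not just objectwise. Everything else is essentially a repackaging of Propositions \ref{prop:ouo}, \ref{prop:indbott} and the Index Theorem \ref{thm:ind} from the looped to the unlooped setting, but this compatibility is where the real content lies.
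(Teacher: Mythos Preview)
Your proposal is correct and follows essentially the same route as the paper: identify the composite $L\times^h_M L \to U/O \to BO$ with $[\mathrm{pr}_1^*TL]-[\mathrm{pr}_2^*TL]$ by unwinding the difference-element construction, then observe that a null-homotopy is exactly a lift $L\times^h_M L \to U$, which via {\sc Prop.\,\ref{prop:indbott}} yields the $BU$-lift of the index map. The one ingredient the paper makes explicit that you leave tacit is the citation of Porcelli--Smith \cite[\S6.4]{PoSm2} identifying the index bundles with $\tilde T\scr M_{*,*}$ compatibly with gluing---this is precisely what handles your ``main obstacle,'' since it is what lets a $U$-lift of the \emph{index} map be interpreted as a $U$-structure on the flow category in the sense of {\sc Def.\,\ref{def:g-flow}}.
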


  \begin{DEF}[$U$-branes]\label{def:ubr}
    Such a null-homotopy is called a \keywd{$U$-brane}. There is a natural diagonal inclusion $L \to L \times^h_M L$ given by the constant path, whose composition with the map to $U/O$ is already canonically zero. Hence, restricting any $U$-brane via the diagonal inclusion gives a based $S^1$-parameter family of maps $L \to BO$, i.e.~ gives a map $L \to O$. We say that the $U$-brane is \keywd{diagonal-respecting} if this map itself is null-homotopic. We call the consequent $U$-structure diagonal-respecting if it comes from such a diagonal-respecting null-homotopy.
  \end{DEF}

  \begin{RMK}
    The relevance of the diagonal-respecting condition is that later we want to ensure that ${\rm HF}^*(M, L)$ has the same Steenrod action as $H^*_{\rm sing}(L)$ under the Albers-PSS isomorphism, and not that of some Thom spectrum over it.
  \end{RMK}

  \begin{EXP}[A good source of $U$-branes]\label{exp:sourceu}
    Any extension $\widetilde{TL}$ of the virtual bundle $TL$ on $L$ to the whole of $M$ induces a diagonal-respecting $U$-brane. Indeed, given any path connecting two points of $L$ inside the ambient $M$, one can canonically trivialize (up to contractible choice) $\widetilde{TL}$ over it, resulting in a witness that ${\rm pr}_{1}^*TL - {\rm pr}_2^* TL$ is zero. This is diagonal-respecting because for constant paths, the induced map will be the identity. Thus, if $[TL]$ is in the image of $\widetilde{KO}(M) \to \widetilde{KO}(L)$, then $L$ admits a (not necessarily unique) diagonal-respecting $U$-brane.
  \end{EXP}

  \begin{proof}[Proof of {\sc Thm.~\ref{thm:ustruc}}]
    First we recall that, as mentioned previously in {\sc Rmk.~\ref{rmk:index}}, it is a result due to Porcelli-Smith \cite[\sc\S6.4]{PoSm2}, that the index bundles of the linearized Cauchy-Riemann operators agree with the tangent bundles to the moduli spaces (plus the extra copy of $\mathbb R$ lost after quotienting by translation), in a way that is compatible with gluing, modulo a contractible space of choices. Thus, it suffices to put $U$-structures on the index bundles which we have been studying in the previous subsection.

    The first claim that $L \times^h_M L \to U/O \to BO$ recovers the difference $[{\rm pr}_1^* TL] - [{\rm pr}_2^* TL]$ follows by unwinding the construction of the difference element in $U/O$. Indeed, recall from the proof of {\sc Prop.~\ref{prop:ouo}}, that this difference map is constructed by taking the difference of the two totally real sub-bundles, together with the witness that its complexification is zero. Its map to $BO$ just forgets the second part of the data, remembering only the difference.

    The second claim follows because a null-homotopy of the map $L \times^h_M L \to U/O \to BO$ is precisely the data of a lift $L \times^h_M L \to U$, which when fed into the machinery of {\sc Prop.~\ref{prop:indbott}} produces the desired $U$-structure.
  \end{proof}

  \begin{RMK}\label{rmk:constrbr}
    If one instead desires a $1$-structure, also known as a stable framing, one would have to nullhomotope $L \times_M^h L \to U/O$ itself. However, in the monotone case this is not possible, because in particular it would force the Maslov number $N_\mu$ to be zero. This would be better suited for studying exact Lagrangians instead. However, we will use a local version of this idea later, for Pozniak-type neighborhoods of clean intersections, where the theory locally behaves as though we were in the exact case. Likewise, the existence of any $U$-brane forces the Maslov number $N_\mu$ to be divisible by $2$, since the map on $H^1$ induced by $U \to U/O$ is multiplication by two.
  \end{RMK}

  \subsection{Independence of choices}\label{ssec:indch} We want to make sure that the resulting Steenrod algebra action on ${\rm HF}^*(M,L)$ is independent of the particular choices of time-dependent Hamiltonian and almost-complex structures used in its construction. On the level of (co)homology itself, the isomorphism is proven by using \keywd{continuation maps}, which given two data $(H_t^{\rm I}, J_t^{\rm I})$ and $(H_t^{\rm II}, J_t^{\rm II})$ one chooses an $s$-parameter interpolation $(H_t^s, J_t^s)$ that recovers the first for very small $s$, and the latter for very large $s$. Then, one counts solutions to the Floer equation \ref{eq:floer}, where $H$ and $J$ now also depend on the position $s$ on the strip. This leads to a chain map
  \[\numeq\label{eq:ctmap} {\rm CF}_*(M, L, H_t^{\rm I}, J_t^{\rm I}) \to {\rm CF}_*(M, L, H_t^{\rm II}, J_t^{\rm II}), \]
  which we call the continuation map induced by the family $(H_t^s, J_t^s)$. Of course, genericity of the $J$'s is once again required to meet the transversality criteria. This map is shown to be an isomorphism by exhibiting a chain-level null-homotopy between its pre/post composition with the opposite continuation map. Since it is already well-established that continuation maps give chain homotopy equivalences (even over $\bb Z$, since a $U$-structure in particular induces an $SO$-structure), we need not worry about upgrading the homotopies to the level of CJS constructions (although this would be nice). It suffices to prove the following.

  \begin{PROP}\label{prop:contflow}
    Assume $(M,L)$ have been endowed with a $U$-brane, so that a well-defined action $\scr A^*_A(\tau_{\le N_\mu - 3}MU)$ exists on the $A$-coefficient cohomology of either side of \ref{eq:ctmap}. Then, the isomorphisms on cohomology induced by the continuation maps \ref{eq:ctmap} respect the Steenrod algebra action.
  \end{PROP}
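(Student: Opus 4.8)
The plan is to realise the continuation isomorphism by a single \emph{cobordism flow category} to which Theorem~\ref{thm:act} applies, so that the continuation map on cohomology becomes a map induced by a morphism of module spectra over $\tau_{\le N_\mu - 3}MU$ and is therefore automatically equivariant for the Steenrod action. Concretely, fix an interpolating family $(H^s_t, J^s_t)$ as in \ref{eq:ctmap}, generic, equal to $(H^{\rm I}_t, J^{\rm I}_t)$ for $s \ll 0$ and $(H^{\rm II}_t, J^{\rm II}_t)$ for $s \gg 0$. Put $I_W := I_{H^{\rm I}} \sqcup I_{H^{\rm II}}$, with $\mu$ extending the two Maslov gradings normalised compatibly (so continuation strips from $\tilde x$ to $\tilde y$ have dimension $\mu(\tilde x) - \mu(\tilde y)$), and with the total order obtained, exactly as in the proof of Theorem~\ref{thm:lagflow}, from a shifted action filtration: keep the orders within each factor, shift every $I_{H^{\rm II}}$-action down by the energy constant $C := 1 + \int_{\mathbb R \times [0,1]} \max_M(\partial_s H^s_t)\,ds\,dt$, and break ties between an $\mathrm{I}$-chord and a $\mathrm{II}$-chord in favour of the $\mathrm{I}$-chord. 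As in Theorem~\ref{thm:lagflow}, the monotonicity spacing of actions and compactness of $\Phi_1(L)\cap L$ make $(I_W,<)$ embeddable in $(\mathbb Z,<)$ with $\mu^{-1}(d)$ finite. Set $\scr M_{\tilde x,\tilde y}$ equal to the relevant $\scr M^{\rm I}$ or $\scr M^{\rm II}$ when $\tilde x,\tilde y$ lie in the same factor, equal to the broken-strip-compactified moduli of continuation strips when $\tilde x \in I_{H^{\rm I}}$ and $\tilde y \in I_{H^{\rm II}}$, and empty otherwise; the energy estimate guarantees consistency with the order. Gromov compactness plus the monotonicity bound of Theorem~\ref{thm:lagflow} (a disk or sphere bubble raises the total Maslov index by at least $N_\mu$) gives compactness whenever $\mu(\tilde x)-\mu(\tilde y) < N_\mu$; Large's smooth gluing \cite{Large}, whose local exponential-decay input is insensitive to the $s$-dependence of $(H,J)$, supplies the $I_W$-corner structure and the associative identifications $\partial^{\tilde z}\scr M_{\tilde x,\tilde y}\cong \scr M_{\tilde x,\tilde z}\times\scr M_{\tilde z,\tilde y}$. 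Thus $\scr F_W$ is an $N_\mu$-truncated flow category containing $\scr F(M,L,H^{\rm I},J^{\rm I})$ and $\scr F(M,L,H^{\rm II},J^{\rm II})$ as the full sub-flow-categories on the convex subsets $I_{H^{\rm I}}$ (an up-set) and $I_{H^{\rm II}}$ (a down-set).

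Next comes the $U$-structure. The linearised continuation operator is again a morphism in $\scr{APS}$ (Definition~\ref{def:aps}): the $s$-dependence of $(H,J)$ is a compactly supported homotopy, so its two APS ends are precisely the nondegenerate ends attached to $\tilde x$ (via $(H^{\rm I},J^{\rm I})$) and to $\tilde y$ (via $(H^{\rm II},J^{\rm II})$). Hence the index computation of \S\ref{ssec:ind}--\S\ref{ssec:ustruc} applies verbatim: the $\bb E_1$-map $\scr M_{*,*}\to BO$ factors through the difference map $L\times^h_M L\to U/O\to BO$ of Theorem~\ref{thm:ustruc}, which the chosen $U$-brane of $(M,L)$ null-homotopes, giving a $U$-structure on $\scr F_W$. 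Since this is the \emph{same} $U$-brane as the one used for $\scr F^{\rm I}$ and $\scr F^{\rm II}$, the $U$-structure restricts to the ones used there, and the $MU$-orientations --- and, using the vanishing of the relevant obstruction groups from \S\ref{ssec:act} applied relatively, also the higher Pontrjagyn--Thom maps needed over $\tau_{\le N_\mu-3}MU$ --- can be chosen compatibly with the choices already made for the two sub-flow-categories.

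By Theorem~\ref{thm:act} applied to $\scr F_W$ we obtain a module pro-spectrum over $\tau_{\le N_\mu-3}MU$ carrying a Steenrod action on its cohomology $H^*_{\tau_{\le N_\mu-3}MU}(-;A)$; in each fixed finite range of degrees the approximation ${\rm CJS}(\scr F_W[i:j]; \tau_{\le N_\mu-3}MU)$ is an honest $\tau_{\le N_\mu-3}MU$-module spectrum, compatibly (Prop.~\ref{prop:fra}). Because $I_{H^{\rm II}}$ is a down-set and $I_{H^{\rm I}}$ an up-set of $I_W$, these approximations carry canonical $\tau_{\le N_\mu-3}MU$-module maps relating them to the corresponding approximations for $\scr F^{\rm II}$ and $\scr F^{\rm I}$ (obtained by attaching, resp. co-attaching, the missing cells, as in the proof of Prop.~\ref{prop:fra}), whose composite realises on $0$- and $1$-dimensional moduli the continuation chain map \ref{eq:ctmap}. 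Since any $\tau_{\le N_\mu-3}MU$-module map induces on $H^*_{\tau_{\le N_\mu-3}MU}(-;A)=\pi_{-*}{\rm RHom}_{\tau_{\le N_\mu-3}MU}(-,HA)$ a map commuting with the left $\scr A^*_A(\tau_{\le N_\mu-3}MU)$-action, the continuation isomorphism is $\scr A^*_A(\tau_{\le N_\mu-3}MU)$-equivariant.

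The hard part will be the pro-spectrum bookkeeping in the last step: in the absence of a workable model structure on pro-spectra (Rmk.~\ref{rmk:prosp}) one must argue entirely within fixed finite ranges, checking that the cell-(co)attaching description of Prop.~\ref{prop:fra} really yields genuine $\tau_{\le N_\mu-3}MU$-module maps between the three CJS constructions, coherently as $i\to-\infty$ and $j\to+\infty$, and that their composite recovers the continuation chain map on the nose rather than merely up to a not-obviously-equivariant homotopy; once everything is phrased in a fixed range the equivariance is forced by functoriality of ${\rm RHom}_{\tau_{\le N_\mu-3}MU}(-,HA)$. A secondary point, already flagged above, is verifying that the shifted action order genuinely embeds $I_W$ in $(\mathbb Z,<)$, which relies on the continuation energy estimate combined with the action spacing used in Theorem~\ref{thm:lagflow}.
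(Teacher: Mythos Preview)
Your approach is the paper's: build an $N_\mu$-truncated continuation flow category on $I_{H^{\rm I}} \sqcup I_{H^{\rm II}}$, give it the $U$-structure coming from the same $U$-brane, and extract the continuation map as a morphism of $\tau_{\le N_\mu-3}MU$-modules (in the paper, as the connecting homomorphism of the cofiber sequence ${\rm CJS}(\scr F_{J})\hookrightarrow{\rm CJS}(\scr F_{I\sqcup J})\twoheadrightarrow{\rm CJS}(\scr F_{I})[1]$), so that Steenrod equivariance is forced by functoriality of ${\rm RHom}_{\tau_{\le N_\mu-3}MU}(-,HA)$.

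Two technical points need repair. First, the grading: continuation strips carry no $\bb R$-translation, so their dimension is $\mu(\tilde x)-\mu(\tilde y)$, one more than the $\mu(i)-\mu(j)-1$ required by {\sc Def.\,\ref{def:truncfl}}. The paper fixes this explicitly by setting $\tilde\mu(i)=\mu(i)+1$ on $I_{H^{\rm I}}$ and $\tilde\mu(j)=\mu(j)$ on $I_{H^{\rm II}}$; your ``normalised compatibly'' does not. Second, and this is the actual gap, your shifted-action total order cannot make $I_{H^{\rm I}}$ and $I_{H^{\rm II}}$ convex in $I_W$: the $\pi_2(M,L)=\bb Z$ action on caps produces, in each factor, chords of arbitrarily high and arbitrarily low action, so no finite energy shift $C$ separates them, and your claim that one is an up-set and the other a down-set is false. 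Without that convexity the filtration argument yielding the cofiber sequence collapses. The paper avoids the whole issue by passing first to finite truncations (as in {\sc Prop.\,\ref{prop:fra}}) and then simply declaring the block order $i<j$ for all $i\in I_{H^{\rm I}}$, $j\in I_{H^{\rm II}}$, which is legitimate because mixed moduli only run ${\rm I}\to{\rm II}$; then $I_{H^{\rm I}}$ is genuinely an initial segment and the cofiber sequence exists.
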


  \begin{proof}
    In \cite[\sc\S 6]{me}, we have explained how to construct continuation maps on the level of Cohen-Jones-Segal constructions, lifting the continuation maps on the chain level, in the case that both flow categories have finite indexing sets $I$ and $J$. Even though in our case the flow categories are potentially unbounded both below and above, this does not really present an issue. As pointed out in the proof of {\sc Thm.~\ref{thm:act}}, one can truncate the flow category $\scr F$ by only looking at indices $i < u < j$, for sufficiently small $i$ and sufficiently large $j$, and the underlying (co)homology will be independent in any finite range. 
    
    Hence, it remains to appropriately define and actually construct an $N_\mu$-truncated continuation flow category from $\scr F(M,L, H_t^{\rm I}, J_t^{\rm I})$ to $\scr F(M,L, H_t^{\rm II}, J_t^{\rm II})$, and explain how the techniques of \cite[\sc\S 6]{me} apply to the truncated case. An $N$-truncated \keywd{continuation flow category} between two $N$-truncated flow categories $\scr F_I$ and $\scr F_J$ indexed on finite totally ordered sets $I$ and $J$, is another $N$-truncated flow category $\scr F_{I \sqcup J}$ indexed on $I \sqcup J$ (with $i < j$ for all $i \in I, j \in J$), which when restricted to $I$ and $J$, respectively, recovers $\scr F_I$ and $\scr F_J$, with the notable exception that the grading $\tilde \mu$ on $\scr F_{I,J}$ is given by $\tilde \mu(i) = \mu(i)+1$ for $i \in I$, and $\tilde \mu(j) = \mu(j)$ (this is because there is no $\bb R$-translation symmetry, and so moduli spaces have one larger dimension as a result of not quotienting by any $\bb R$-action.) The continuation map is built as the connecting homomorphism in the exact triangle associated to the cofiber sequence
    \[ {\rm CJS}(\scr F_J) \hookrightarrow {\rm CJS}(\scr F_{I \sqcup J}) \twoheadrightarrow {\rm CJS}(\scr F_J)[1]. \]
    See \cite[\sc\S 6]{me} for more details.

    So we must construct compact moduli spaces $\scr M_{\tilde x^{\rm I}, \tilde y^{\rm II}}$ with the obvious boundary compatibility conditions, as long as $\tilde \mu(\tilde x^{\rm I}) - \tilde \mu(\tilde y^{\rm II}) < N_\mu$. As in {\sc Thm.~\ref{thm:brok}} due to Large \cite{Large}, broken strips (with respect to either end I or II) can be added in order to form a partial compactification of the open moduli space. These are compact once again because any disk bubble or sphere bubble decreases the Maslov difference by $N_\mu$, leading to a strip of negative Maslov difference, which is impossible. This concludes our proof.
  \end{proof}

  As a corollary, we have the desired

  \begin{THM}[Action is well-defined]\label{thm:welldef}
    Given data as presented in the beginning of {\sc \S\ref{ssec:setup}}, endowed with a $U$-brane, cf.~ {\sc Def.~\ref{def:ubr}}, there is a well-defined Steenrod algebra action 
    \[ \scr A^*_A (\tau_{\le N_\mu - 3} MU) \; \circlearrowright \; {\rm HF}^*(M,L;A),\]
    on Floer cohomology, independent of the choices of Hamiltonian and almost complex structures, respectively. 
  \end{THM}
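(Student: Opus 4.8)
\emph{Proof proposal.} The plan is essentially to assemble ingredients already in place. First I would fix a non-degenerate time-dependent Hamiltonian $H_t$ and a generic $\omega$-compatible $J_t$ meeting the transversality requirements, so that {\sc Thm.\,\ref{thm:lagflow}} produces an $N_\mu$-truncated flow category $\scr F(M,L,H_t,J_t)$ whose moduli spaces are the compactified $\scr M_{\tilde x,\tilde y}$ of {\sc Thm.\,\ref{thm:brok}}. The chosen $U$-brane, via {\sc Thm.\,\ref{thm:ustruc}} (which uses the identification of {\sc Rmk.\,\ref{rmk:index}} of the tangent bundles of the moduli spaces with Cauchy-Riemann index bundles, together with the index theory of {\sc\S\ref{ssec:ind}}), equips $\scr F(M,L,H_t,J_t)$ with a stable $U$-structure in the sense of {\sc Def.\,\ref{def:g-flow}}; since $MU = MG$ for $G = U$, the remark following {\sc Def.\,\ref{def:pt}} upgrades this to an $MU$-orientation ({\sc Def.\,\ref{def:r-or-flow}}). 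Now {\sc Thm.\,\ref{thm:act}} applied with $R = MU$ (so $k = \pi_0 MU = \bb Z$) and coefficient module $A$ yields a graded action of $\scr A^*_A(\tau_{\le N_\mu - 3}MU)$ on ${\rm HF}^*(\scr F(M,L,H_t,J_t);A)$, and by {\sc Def.\,\ref{def:hf}} the latter cochain complex is the ordinary Lagrangian Floer complex of $(M,L)$ -- its differential being the $\bb Z$-valued count $\#_{MU}\scr M_{\tilde x,\tilde y}$ of rigid strips, which is the usual signed count since $\pi_0 MU = \bb Z$. So for a fixed auxiliary pair the action lives on ${\rm HF}^*(M,L;A)$ as claimed.

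The actual content is independence of $(H_t,J_t)$. Given two choices I and II, the continuation chain homotopy equivalence \ref{eq:ctmap} is already known to induce an isomorphism on ${\rm HF}^*(M,L;A)$ (even integrally, since a $U$-structure in particular gives an $SO$-structure); what remains is that this isomorphism intertwines the two Steenrod actions, which is exactly {\sc Prop.\,\ref{prop:contflow}}. I would emphasize the two points that make that proposition applicable here: (i) the continuation moduli spaces are cut out by real Cauchy-Riemann operators on the strip depending on a further parameter $s$, so the index theory of {\sc\S\ref{ssec:ind}} and {\sc Thm.\,\ref{thm:ustruc}} apply to them verbatim, and the $U$-brane -- living on $L \subset M$, hence insensitive to $H$ and $J$ -- endows the $N_\mu$-truncated continuation flow category $\scr F_{I \sqcup J}$ with an $MU$-orientation restricting on the $I$- and $J$-parts to the $MU$-orientations of $\scr F_{\rm I}$ and $\scr F_{\rm II}$; and (ii) compactness of the continuation moduli spaces in the range $\tilde\mu(\tilde x^{\rm I}) - \tilde\mu(\tilde y^{\rm II}) < N_\mu$ follows from the same bubbling-versus-Maslov-dimension count as in {\sc Thm.\,\ref{thm:lagflow}}. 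The cofiber sequence of {\sc Prop.\,\ref{prop:contflow}} then realizes the continuation map as a morphism of $\tau_{\le N_\mu - 3}MU$-modules between the two CJS constructions, so it commutes with every $\phi \in \scr A^*_A(\tau_{\le N_\mu - 3}MU)$; and the obstruction-theoretic fill-ins of the higher ${\rm PT}_{i,j}$ on $\scr F_{I\sqcup J}$ can be chosen to restrict to those already made on $\scr F_{\rm I}, \scr F_{\rm II}$, by the parametric extension property following the obstruction computation in {\sc\S\ref{ssec:act}}.

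The one genuine subtlety -- and the step I expect to take the most care -- is that the capped-chord index set $I_H$ is unbounded in both directions, so {\sc\S\ref{ssec:act}} produces only a pro-spectrum rather than an honest $\tau_{\le N_\mu - 3}MU$-module spectrum. I would handle this exactly as in the proof of {\sc Thm.\,\ref{thm:act}}: by {\sc Prop.\,\ref{prop:fra}}, in each fixed cohomological degree $d$ the group ${\rm HF}^d_R$ and the action of each $\phi \in \scr A^r_A(\tau_{\le N_\mu-3}MU)$ on the finite truncations ${\rm CJS}(\scr F[i:j];\tau_{\le N_\mu-3}MU)$ stabilize once $i$ is small enough and $j$ large enough, so the action is well-defined degreewise on ${\rm HF}^*(M,L;A)$; the continuation argument of the previous paragraph is likewise run on finite segments, with $i$ small and $j$ large enough to simultaneously stabilize $\scr F_{\rm I}[i:j]$, $\scr F_{\rm II}[i:j]$ and the continuation flow category between their truncations. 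Assembling these compatible degreewise statements gives the well-defined action, independent of the Hamiltonian and almost-complex structure; everything else is a direct citation of {\sc Thms.\,\ref{thm:lagflow},\,\ref{thm:ustruc},\,\ref{thm:act}} and {\sc Prop.\,\ref{prop:contflow}}. \qedhere
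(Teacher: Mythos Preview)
Your proposal is correct and follows exactly the paper's approach: the paper states this theorem as an immediate corollary of {\sc Prop.\,\ref{prop:contflow}} (literally ``As a corollary, we have the desired\ldots''), and your argument is simply a more explicit unpacking of that corollary, assembling {\sc Thms.\,\ref{thm:lagflow},\,\ref{thm:ustruc},\,\ref{thm:act}} and {\sc Prop.\,\ref{prop:contflow}} with the finite-range approximation of {\sc Prop.\,\ref{prop:fra}}. Your added remarks on compatibility of the obstruction-theoretic fill-ins and on the $U$-brane being insensitive to $(H,J)$ are correct refinements that the paper leaves implicit.
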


  With little extra work, we also obtain well-defined generalized cohomology theories for modules with finite homotopy-support:

  \begin{THM}[Generalized (co)homology is well-defined]\label{thm:welldefZ}
    With the same hypotheses as in the beginning of {\sc\S\ref{ssec:setup}}, a $U$-brane on the Lagrangian allows one to define generalized (co)homology theories
    \[ {\rm HF}^*(M,L;Z), \quad \text{and} \quad {\rm HF}_*(M,L;Z), \]
    at least when $Z$ is a $\tau_{\le N_\mu - 3} MU$-module with finite homotopy-support (cf.~ {\sc Def.~\ref{def:fin}}). The cohomology has a well-defined action of the Steenrod algebra $\scr A^*_Z(\tau_{\le N_\mu - 3} MU)$, and there are both homological and cohomological Atiyah-Hirzebruch spectral sequences converging strongly to the generalized (co)homology just defined. These are independent of the choices of Hamiltonian and almost-complex structure.
  \end{THM}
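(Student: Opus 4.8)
The plan is to obtain everything as a direct application of the general theory of {\sc\S\ref{ssec:steenact}}--{\sc\S\ref{ssec:act}} to the flow category $\scr F(M,L)$, and then to run the continuation-map argument of {\sc Prop.\,\ref{prop:contflow}} with generalized coefficients. First I would note that {\sc Thm.\,\ref{thm:lagflow}} produces the $N_\mu$-truncated flow category $\scr F(M,L)$, and that the chosen $U$-brane endows it with a $U$-structure by {\sc Thm.\,\ref{thm:ustruc}}; since $MU$ is the $U$-Thom spectrum, the remark after {\sc Def.\,\ref{def:pt}} then gives $\scr F(M,L)$ a canonical $MU$-orientation. Applying {\sc Thm.\,\ref{thm:gcoh}} with $R = MU$ and $N = N_\mu$ (recall $N_\mu \ge 3$, so $\tau_{\le N_\mu-3}MU$ is connective with $\pi_0 = \bb Z$), for any $\tau_{\le N_\mu-3}MU$-module $Z$ with finite homotopy-support one gets well-defined generalized theories ${\rm HF}_*(\scr F(M,L);Z)$ and ${\rm HF}^*(\scr F(M,L);Z)$, which we take as the definition of ${\rm HF}_*(M,L;Z)$ and ${\rm HF}^*(M,L;Z)$. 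The Steenrod action of $\scr A^*_Z(\tau_{\le N_\mu-3}MU)$ is the one produced by the $Z$-coefficient variant of {\sc Thm.\,\ref{thm:act}} noted at the end of {\sc\S\ref{ssec:steenact}}, obtained by applying $\pi_{-*}{\rm RHom}_{\tau_{\le N_\mu-3}MU}(-,HZ)$ to the pro-spectrum $\{{\rm CJS}(\scr F(M,L)[i:j];\tau_{\le N_\mu-3}MU)\}$, with finite-range independence supplied by the generalized analog of {\sc Prop.\,\ref{prop:fra}}.

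Next I would build the two Atiyah--Hirzebruch spectral sequences. Each finite segment ${\rm CJS}(\scr F(M,L)[i:j];\tau_{\le N_\mu-3}MU)$ carries a skeletal filtration; equivalently, one filters by the Postnikov tower of $Z$ as a $\tau_{\le N_\mu-3}MU$-module, whose associated graded pieces are $\Sigma^n H\pi_nZ$. This produces a cohomological spectral sequence with $E_2 \cong {\rm HF}^*(M,L;\pi_*Z)$ and a homological one with $E^2 \cong {\rm HF}_*(M,L;\pi_*Z)$, where $\pi_*Z$ is regarded as a graded module over $\pi_0(\tau_{\le N_\mu-3}MU) = \bb Z$. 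Strong convergence is exactly where the finiteness hypothesis enters: since $\pi_nZ = 0$ for all but finitely many $n$, the filtration has finite length, so in each total degree only finitely many associated-graded summands survive; each entry is finitely generated because $\mu^{-1}(d)$ is finite, and the pages stabilize after a finite stage. Passing from the segments to the pro-spectrum, the tower of spectral sequences is eventually constant in every bidegree (by {\sc Prop.\,\ref{prop:fra}} and its generalized analog), so the $\lim$ and $\lim^1$ terms present no obstruction and one gets honest, strongly convergent spectral sequences computing ${\rm HF}^*(M,L;Z)$ and ${\rm HF}_*(M,L;Z)$. I expect \emph{this} reconciliation of the inverse/direct limits over $i,j$ with the filtration spectral sequence to be the main technical point; its resolution is precisely that finite homotopy-support of $Z$, together with finiteness of each $\mu^{-1}(d)$, forces the relevant pro-systems to be eventually constant in each degree.

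Finally, for independence of the Hamiltonian and almost-complex structure, I would repeat the strategy of {\sc Prop.\,\ref{prop:contflow}}: given two choices, build the $N_\mu$-truncated continuation flow category $\scr F_{I\sqcup J}$, whose moduli spaces are again compact in the range of Maslov difference below $N_\mu$ because any bubbled-off strip would have nonpositive Maslov difference (exactly as in {\sc Thm.\,\ref{thm:brok}} and {\sc Thm.\,\ref{thm:lagflow}}). Applying the $Z$-(co)homology functors to the cofiber sequence of {\sc\S\ref{ssec:indch}} yields continuation maps on generalized (co)homology; composing with the opposite continuation together with the chain-level null-homotopy --- which, as noted there, exists even integrally since a $U$-structure induces an $SO$-structure --- shows these are isomorphisms. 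Since all of this is natural at the level of the pro-spectrum, the isomorphisms automatically commute with the $\scr A^*_Z(\tau_{\le N_\mu-3}MU)$-action and induce isomorphisms of the Atiyah--Hirzebruch spectral sequences from $E_2$ onward. This yields the asserted independence and completes the argument.
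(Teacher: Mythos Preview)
Your overall strategy matches the paper's: build $\scr F(M,L)$ with its $MU$-orientation, invoke the general theory of {\sc\S\ref{ssec:act}} for the existence of $Z$-(co)homology and the Steenrod action, construct the AHSS from the Postnikov tower of $Z$, and then run the continuation-flow argument for independence. The construction of the theories and of the AHSS is fine, and your discussion of strong convergence via finite homotopy-support is correct.

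There is, however, a genuine gap in your independence step. You write that ``composing with the opposite continuation together with the chain-level null-homotopy \ldots\ shows these are isomorphisms'' on $Z$-(co)homology. But the null-homotopy referred to in {\sc\S\ref{ssec:indch}} lives only at the level of the Floer chain complex ${\rm CF}_*$; the paper explicitly declines to upgrade it to a homotopy of CJS pro-spectra. That chain-level statement therefore only yields an isomorphism on \emph{ordinary} Floer (co)homology --- i.e.\ on the $E_2$-page of your AHSS --- and says nothing directly about generalized $Z$-(co)homology. The paper closes this gap precisely by using the AHSS you have already built: the continuation map is a map of filtered (pro-)spectra, hence induces a morphism of Atiyah--Hirzebruch spectral sequences; on $E_2$ it is the ordinary-coefficient continuation map, which is an isomorphism by the chain-level argument; since the Postnikov filtration on $Z$ has finite length, the 5-lemma then forces an isomorphism on the abutment $Z^*$ or $Z_*$. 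In your write-up the logic is reversed --- you first claim the isomorphism on $Z$-theory and only afterwards mention that it ``induces isomorphisms of the Atiyah--Hirzebruch spectral sequences from $E_2$ onward.'' Reorder the last paragraph so that the AHSS comparison is what \emph{produces} the isomorphism on $Z$-(co)homology, rather than being a consequence of it, and the argument is complete.
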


  \begin{proof}
    Again, the construction is very similar, so we omit the details. The finite homotopy-support condition on $Z$ once again ensures that we can use finite-range approximation to reduce the construction of the continuation flow from infinite to finite ranged flow categories. To show the induced continuation map on $Z$-(co)homology is well-defined, we actually rely on the Atiyah-Hirzebruch spectral sequence. Indeed, there is a finite length filtration on $Z$, coming from its Postnikov tower, with graded quotients given by Eilenberg-MacLane spectra over $R = \tau_{\le N_\mu - 3} MU$. This induces the desired spectral sequence, and from the fact that continuation maps induce isomorphisms on the $E_2$-page (via the result on ordinary (co)homology established above), it follows by the 5-lemma that it also induces an isomorphism on generalized $Z$-(co)homology.
  \end{proof}

  \begin{RMK}[Periodicity]
    The action of $\pi_2(M,L) = \bb Z$ on the capped chords results in an isomorphism ${\rm HF}^* \cong {\rm HF}^{*+N_\mu}$. This is easily seen to respect the Steenrod algebra action, since the action extends to the flow category.
  \end{RMK}

  \subsection{The Oh-Pozniak spectral sequence}\label{ssec:ohpoz} Now that we have a well-defined invariant, we want some tools to make it computable. It is natural therefore to look at already existing methods, and show that they are compatible with the Steenrod algebra action (and/or the generalized cohomology).
  
  In \cite[\sc Thm.\,iv]{Oh96}, Oh constructed a spectral sequence from the singular cohomology of $L$, to its Floer cohomology. This can be used to recover Albers' PSS isomorphism \cite{Albers}. Pozniak \cite{Poz} shows that if two exact Lagrangians intersect cleanly in a single connected component, then given suitable Hamiltonian perturbations and almost-complex structures, the Floer and Morse (co)chains are isomorphic. As a corollary, cf.~ \cite{Sei99} and \cite[\sc\S 6]{Auy}, if two Lagrangians intersect in several clean components, there is a spectral sequence from the singular cohomologies thereof to the Lagrangian intersection Floer cohomology. Blakey \cite{Blakey} has already extended this result to the Floer homotopy type in the exact case, by more generally allowing intersections locally modelled after the graph of an exact 1-form.
  
  It is this spectral sequence that we seek to upgrade to a spectral sequence living over the Steenrod algebra (or valued in generalized cohomology), in the monotone case. We name the spectral sequence in honor of Oh and Pozniak, whose ideas and technical work constitute the bulk of the proof, even though the result was formulated later by other mathematicians. We begin with a preliminary definition about local systems, which is needed to treat $U$-branes that are not diagonal-respecting. For the purposes of our application in {\sc\S\ref{ssec:proj}}, this may safely be ignored, and replaced with the more widely-known notion of a virtual bundle.

  \begin{DEF}[$R$-local systems]\label{def:ls}
    Given a ring spectrum $R$, and a topological space $X$, an \keywd{$R$-local system} on $X$ is defined formally as a continuous map $\xi : X \to \mathbb Z \times {\rm BGL}_1(R)$, where the latter is the delooping of the $\bb E_1$-group ${\rm GL}_1(R)$ of self-equivalences of $R$ as a module over itself. The space $ \mathbb Z \times {\rm BGL}_1(R)$ can alternatively be described as the full $\infty$-groupoid of all $R$-modules that are abstractly isomorphic to some shift of $R$ (the integer in $\mathbb Z$ records this shift formally). In this language, one can interpret an $R$-local system $\xi$ as a ${\rm Sing}_\bullet X$-shaped diagram in spectra, where each vertex is mapped to an $R$-module that is abstractly isomorphic to some shift of $R$. The \keywd{Thom spectrum} ${\rm Th}(\xi) = X^\xi$ of this local system is simply the colimit of this diagram, taken inside $R$-modules. It should be visualized as the ``total space'' of a ``fibration'' over $X$ with fiber at $x \in X$ given by $\xi(x) \in \bb Z \times {\rm BGL}_1(R)$. The trivial local system of degree $n$ will simply be denoted by $R[n]$, and its Thom spectrum is just $\Sigma^\infty_+ X \wedge R$ shifted by $n$. The Thom isomorphism still holds over any $R$-algebra, viewed as a multiplicative cohomology theory of $R$-modules, given any choice of Thom class (particularly, $H\bb F_2$ is an algebra over $MU$, and every local system is canonically oriented in $H\bb F_2$). Any classical virtual bundle has an induced $R$-local system, via the space-level $J$-homomorphism
    \[ X \to \mathbb Z \times BO \overset{BJ}\to \mathbb Z \times BGL_1(\mathbb S) \to \mathbb Z \times BGL_1(R). \]
    One should visualize the fibers of the associated $\mathbb S$-local system as being the ``one-point compactifications'' of the fibers of the virtual bundle. Smashing with $R$ gives the $R$-local system. A modern reference on local systems and Thom spectra is \cite{ABGHR14}. See also older references \cite{MQRT77, LMSM86} that do not use $\infty$-categories.
  \end{DEF}

  \begin{THM}[Oh-Pozniak spectral sequence]\label{thm:ohpoz}
    Assume $(M,L)$ is given as in the setup {\sc\S\ref{ssec:setup}}, endowed with a $U$-brane, and a cleanly degenerate choice of time-dependent Hamiltonian $H_t$ with $L \cap \Phi_1(L)$ a disjoint union of connected clean intersections.
    
    In a manner similar to the non-degenerate case, each component $C$ gives rise to a $\bb Z$-torsor of capped components $\tilde C$. Let us order all these capped components in ascending order $\{\tilde C_p\}_{p \in \bb Z}$ of their action functional (the order can be made arbitrary when two components have the same action, or one can assemble together all components with the same action into one group.)
    
    Then, for any coefficient system $A$ (or, more generally, $\tau_{\le N_\mu - 3} MU$-module $Z$ with finite homotopy-support), there is a spectral sequence
    \[ E_1^{p,k} = H^k_{\rm sing}(C_p^{\tilde \xi_p}; A) \implies {\rm HF}^k(M,L; A)\]
    of $N_\mu$-periodic $\scr A^*_A(\tau_{\le N_\mu - 3}MU)$-modules, where $\tilde \xi_p$ is an $MU$-local system {\sc(Def.~\ref{def:ls})} on $C_p$, with the property that $\tilde \xi' - \tilde \xi = MU[\delta N_\mu]$ for any two capped components $\tilde C', \tilde C$ lifting the same component $C$, where $\delta$ is the difference element of the two caps in $\pi_2(M,L) = \bb Z$. Here, $p$ is the filtration degree and $k$ is the total degree, and the differential $d_r$ has $(p,k)$-bidegree $(r,1)$. If the $U$-brane is diagonal-respecting, $\tilde \xi_p$ can be chosen to be actual virtual bundles.

    If $H_t = 0$, then the only component of the intersection is $L$ itself, and there is a canonical choice of cap $\tilde L_0$, namely the constant one. If the $U$-brane is diagonal-respecting, then the bundles $\tilde \xi$ can be assumed to be trivial, of rank equal to $\delta N_\mu$, where $\delta$ is the difference element between $\tilde \xi$ and $\tilde \xi_0$ in $\pi_2(M,L) = \bb Z$. So the spectral sequence takes the form
    \[ E_1^{p,k} = H^{k - pN_\mu}(L; A) \implies {\rm HF}^k(M,L;A), \]
    recovering Oh's spectral sequence.
  \end{THM}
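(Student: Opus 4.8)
The plan is to obtain this spectral sequence as the one attached to the action filtration on an $N_\mu$-truncated flow category built from a small Pozniak-type perturbation of the cleanly degenerate data. Write $R := \tau_{\le N_\mu - 3}MU$. First I would fix a Morse function $f$ on a neighbourhood of each clean component $C$ of $L \cap \Phi_1(L)$, extend it to $M$, and replace $H_t$ by $H_t + \epsilon f$ for small generic $\epsilon > 0$; by Pozniak \cite{Poz} this is nondegenerate, its Hamiltonian chords of action near $\scr A(\tilde C_p)$ are exactly the critical points of $f|_{C_p}$ (each inheriting a cap from $\tilde C_p$, so the capped components $\tilde C_p$ of the statement are the evident $\bb Z$-torsor), and no new bubbling enters the truncation range, so the construction of {\sc\S\ref{ssec:lagflow}} applies verbatim to produce an $N_\mu$-truncated flow category $\scr F_\epsilon := \scr F(M, L, H_t + \epsilon f, J_t)$. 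The $U$-brane pulls back to a $U$-brane for $\scr F_\epsilon$, so by {\sc Thm.\,\ref{thm:welldef}} it carries a Steenrod-module CJS pro-spectrum with $H^*_R({\rm CJS}(\scr F_\epsilon); A) \cong {\rm HF}^*(M, L; A)$ via the finite-range form of {\sc Props.\,\ref{prop:rechf},\,\ref{prop:fra}} (and the same with a finite-homotopy-support $Z$ in place of $A$, by {\sc Thm.\,\ref{thm:welldefZ}}). The action filtration on capped chords then induces a filtration of ${\rm CJS}(\scr F_\epsilon)$ by sub-pro-spectra, and the spectral sequence obtained by applying ${\rm RHom}_R(-, HA)$ is the desired one; because the filtration sits inside an object carrying the action of {\sc Thm.\,\ref{thm:welldef}}, the whole spectral sequence is one of $\scr A^*_A(R)$-modules with $d_r$ of $(p,k)$-bidegree $(r,1)$, and it is $N_\mu$-periodic because the $\pi_2(M,L) = \bb Z$-action on caps translates the $\tilde C_p$ and shifts the grading by $N_\mu$ (cf.~the Periodicity Remark, compatibly with ${\rm HF}^* \cong {\rm HF}^{*+N_\mu}$).

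\textbf{Identifying the $E_1$-page.} For $\epsilon$ small the chords cluster, giving an order-preserving partition of the index set into intervals $I_p$ indexed by $\tilde C_p$, and the $p$-th associated graded of the filtered pro-spectrum is ${\rm CJS}$ of the sub-flow-category $\scr F_\epsilon^{(p)} := \scr F_\epsilon|_{I_p}$, which by the standard compactness argument consists precisely of the Floer strips staying in the Pozniak neighbourhood of $C_p$. The crucial input is Pozniak's theorem \cite{Poz}: up to dimension $N_\mu - 2$, $\scr F_\epsilon^{(p)}$ is the Morse flow category of $f|_{C_p}$, twisted by the index bundle of the family of linearized Floer operators along $C_p$. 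This is \cite{Poz, Sei99, Auy} upgraded from counts to moduli spaces with corners as in Blakey \cite{Blakey}, carried out in the monotone $N_\mu$-truncated setting, where the smooth corner structure is again supplied by the exponential-decay gluing of {\sc Thm.\,\ref{thm:brok}} and the Pozniak neighbourhood plays the role of the local exact model (cf.~{\sc Rmk.\,\ref{rmk:constrbr}}). By {\sc Rmk.\,\ref{rmk:index}} (index bundles $=$ tangent bundles of moduli spaces, Porcelli-Smith \cite{PoSm2}) and the index calculus of {\sc\S\ref{ssec:ind}}, the $U$-brane equips this index bundle with a $U$-structure, hence an $MU$-orientation; so, over $R$, ${\rm CJS}(\scr F_\epsilon^{(p)}) \simeq C_p^{\tilde\xi_p}$, the Thom spectrum {\sc(Def.\,\ref{def:ls})} of the $MU$-local system $\tilde\xi_p$ classified by this index bundle. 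Applying ${\rm RHom}_R(-, HA)$ to the wedge of graded pieces and invoking the Thom isomorphism --- valid over $MU$, hence over $R$, since every $MU$-local system is $H\bb F_2$-oriented --- identifies $E_1^{p,k} = H^k_{\rm sing}(C_p^{\tilde\xi_p}; A)$, abutting to $H^*_R({\rm CJS}(\scr F_\epsilon); A) = {\rm HF}^*(M, L; A)$. The Maslov cap-change formula $\mu(\tilde x') - \mu(\tilde x) = \mu(A) = \delta N_\mu$ from {\sc\S\ref{ssec:setup}} gives both $\tilde\xi' - \tilde\xi = MU[\delta N_\mu]$ for caps differing by $\delta \in \pi_2(M,L)$, and strong convergence: it forces ${\rm rk}\,\tilde\xi_{p+m} = {\rm rk}\,\tilde\xi_p + N_\mu$ over one $\pi_2$-period $m$, so only finitely many $p$ contribute to each total degree.

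\textbf{Refinements, and the main obstacle.} When the $U$-brane is diagonal-respecting, the local exact models near the components can be matched consistently --- this is where the condition of {\sc Def.\,\ref{def:ubr}} on the restriction along $L \hookrightarrow L \times^h_M L$ is used --- which lets one refine each $\tilde\xi_p$ from an abstract $MU$-local system to an honest virtual bundle, namely the index of the clean intersection assembled from ${\rm pr}_i^* TL$ along $C_p$ as in {\sc Thm.\,\ref{thm:ustruc}}. For $H_t = 0$ the only clean component is $L$ itself, lying along the diagonal of $L \times^h_M L$ where the constant cap $\tilde L_0$ is canonical; taking $f$ an honest Morse function on $L$, Pozniak's model is literally the Morse flow category of $f$, its index bundle along the diagonal is stably trivial, and the diagonal-respecting hypothesis collapses $\tilde\xi_p$ to the trivial bundle of rank $\delta N_\mu$, so $E_1^{p,k} = H^{k - pN_\mu}(L; A)$ --- Oh's spectral sequence \cite{Oh96}, whose edge homomorphism is the Albers-PSS map {\sc(Thm.\,\ref{thm:albpss})} and whose higher differentials record disk-bubbling. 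The step I expect to be the main obstacle is precisely this Pozniak localization in the $N_\mu$-truncated flow-category language: one must show that the honestly Floer-theoretic moduli spaces comprising $\scr F_\epsilon^{(p)}$ agree, as compact smooth manifolds with corners up to dimension $N_\mu - 2$ and compatibly with the $U$-structure of {\sc\S\ref{ssec:ind}}, with the twisted Morse moduli spaces of $f|_{C_p}$, i.e. an upgrade of \cite{Poz, Blakey} to the monotone setting. Everything else --- the filtration, the Steenrod-module structure and differentials, convergence, and $N_\mu$-periodicity --- is formal manipulation of the action filtration within the CJS machinery of {\sc\S\ref{ssec:act}}.
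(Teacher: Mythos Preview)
Your overall approach matches the paper's: perturb to a nondegenerate Hamiltonian via Pozniak, filter the CJS construction by action, and identify the associated graded pieces with Thom spectra over the clean components using the Morse-theoretic result of \cite{me}. Two technical points, however, are handled differently in the paper and deserve comment.

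First, you ask for the Floer and Morse moduli spaces in the Pozniak neighbourhood to agree \emph{as compact smooth manifolds with corners}. The paper observes that they do not: Large's gluing profile (used on the Floer side, {\sc Thm.\,\ref{thm:brok}}) is incompatible with the logarithmic profile underlying the Morse smooth structures of \cite{me}. The resolution is to run the CJS construction in the topological category with locally flat embeddings, noting that the arguments of \cite[\S4--5]{me} only need the stratified topological manifolds $\scr M_{i,j}$ together with the contractible spaces $\scr W_i$ of half-broken trajectories, not a smooth structure. Your insistence on smooth agreement would leave you stuck here.

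Second, and more seriously, you frame the Pozniak identification as holding only ``up to dimension $N_\mu - 2$,'' i.e.\ within the truncation range. But to identify ${\rm CJS}(\scr F_\epsilon^{(p)})$ with a Thom spectrum $C_p^{\tilde\xi_p}$ via \cite[{\sc Main Thm.\,iii}]{me}, one needs \emph{all} the higher-dimensional Morse moduli spaces, not just the truncated ones --- they enter the very construction of the local system $\tilde\xi_p$. The paper makes exactly this point in {\sc Rmk.\,\ref{rmk:hidim}}: inside the Pozniak neighbourhood bubbling is excluded even for Maslov difference $\ge N_\mu$ (the local model is exact), so the full untruncated Morse flow category is available and $MU$-oriented there, and one may apply the full machinery of \cite{me} locally before descending to $\tau_{\le N_\mu-3}MU$. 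Without this observation your step ``${\rm CJS}(\scr F_\epsilon^{(p)}) \simeq C_p^{\tilde\xi_p}$'' is not justified. The diagonal-respecting refinement likewise goes through this route: the paper shows that for such branes the local null-homotopy restricts to the zero one, so the Pozniak sub-flow-category is in fact stably \emph{framed}, and then the virtual-bundle (rather than $MU$-local-system) version of \cite{me} applies.
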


  \begin{RMK}[Duality]\label{rmk:dual}
    This spectral sequence comes from a spectrum-level filtration, so one can equally well apply homology instead of cohomology:
    \[ E^1_{p,k} = H_k^{\rm sing}(C_p^{\tilde \xi_p}; A) \Longrightarrow {\rm HF}_k(M, L; A), \]
    with ${\rm deg}_{(p,k)}\,d_r = (-r, -1)$. There is still an action of the Steenrod algebra, because by Spanier-Whitehead duality one has $H_*(X^\vee; A) \cong H^{-*}(X; A)$ if $X^\vee$ denotes the Spanier-Whitehead dual. Now, the underlying spectrum of Floer homology is not finite (in fact, it is only a pro-spectrum), but once again one can consider large enough finite sub-quotients. Hence, both sides have actions of the Steenrod algebra, and one can rewrite at least the $E^1$-page more explicitly as
    \[ E^1_{p,k} = H^{-k}_{\rm sing}(C_p^{-\tilde \xi_p - TC_p}; A), \]
    with the obvious action of the Steenrod algebra. Note that now Steenrod operations decrease the homological degree $k$. We conjecture that this dual spectral sequence should have an intrinsic Floer-theoretic interpretation, coming from reversing the Hamiltonian flow. Indeed, classical Floer cohomology satisfies Poincar\'e duality (which is proven by reversing the flow), and under this identification of Floer homology and cohomology the Oh-Pozniak spectral sequence for the reversed flow is exactly of the same form as the dual Oh-Pozniak spectral sequence for the original flow, that we have just described. What is not a priori clear is whether these two spectral sequences should be canonically isomorphic. Regardless, this technical point is not important at all for our applications in {\sc\S\ref{ssec:proj}}.
  \end{RMK}

  For purely degree-theoretic reasons, one recovers the Albers-PSS isomorphism, over the Steenrod algebra, and likewise for generalized cohomology theories:

  \begin{THM}[Albers-PSS isomorphism]\label{thm:albpss}
    Assume given $(M,L)$ as in the setup {\sc\S\ref{ssec:setup}}, endowed with a diagonal-respecting $U$-brane. Then, there is an isomorphism $H^*_{\rm sing}(L; A) \cong {\rm HF}^*(M,L;A)$ in the range
    \[\numeq\label{eq:pssrange} n - N_\mu + 2 \le * \le N_\mu - 2 , \]
    which moreover is compatible with the $\scr A^*_A(\tau_{\le N_\mu - 3}MU)$-action (note that both the domain and codomain of the given Steenrod operation must lie in this range.) Likewise there is an isomorphism on homology, cf.~the Steenrod algebra action on homology described in {\sc Rmk.~\ref{rmk:dual}} above. If the coefficient system $A$ is replaced by a $\tau_{\le N_\mu - 3} MU$-module $Z$ with finite homotopy-support, then the isomorphism holds for $Z^*$ (or $Z_*$) theory in the range 
    \[n - N_\mu + 2 + d^{\rm max} \le * \le N_\mu - 2 + d^{\rm min},\] where $d^{\rm min}, d^{\rm max}$ are the smallest resp. largest degree of a nonzero $Z^*({\rm pt})$ (or $Z_*({\rm pt})$).
  \end{THM}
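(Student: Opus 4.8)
The plan is to read this off from the Oh--Pozniak spectral sequence of {\sc Thm.\,\ref{thm:ohpoz}}, specialized to $H_t = 0$ together with a diagonal-respecting $U$-brane. As recorded in that theorem, in this case the only clean component is $L$ with the constant cap $\tilde L_0$, the twisting $MU$-local systems $\tilde\xi_p$ may be taken to be trivial bundles of rank $pN_\mu$, and the spectral sequence takes the shape
\[ E_1^{p,k} = H^{k - pN_\mu}_{\rm sing}(L; A) \ \implies\ {\rm HF}^k(M,L;A), \qquad \deg_{(p,k)} d_r = (r,1), \]
as a spectral sequence of $\scr A^*_A(\tau_{\le N_\mu - 3}MU)$-modules. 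Write $n = \dim L$. A column $p$ is nonzero in total degree $k$ precisely when $0 \le k - pN_\mu \le n$, i.e. $(k-n)/N_\mu \le p \le k/N_\mu$; when $k$ lies in the asserted range $n - N_\mu + 2 \le k \le N_\mu - 2$ these bounds become $2/N_\mu - 1 \le p \le 1 - 2/N_\mu$, hence $-1 < p < 1$ and the only surviving column is $p = 0$. The same elementary estimate shows that $E_1^{r, k+1} = 0$ and $E_1^{-r, k-1} = 0$ for every $r \ge 1$, so no differential can enter or leave the spot $(0,k)$ on any page. Therefore $E_1^{0,k} = E_\infty^{0,k} = H^k_{\rm sing}(L;A)$, and being the only nonzero graded piece in that total degree it is all of ${\rm HF}^k(M,L;A)$; the resulting isomorphism is Albers' PSS map (cf.~{\sc Thm.\,\ref{thm:ohpoz}} recovering Oh's spectral sequence). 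Convergence is not an issue despite the doubly-infinite action filtration, since by {\sc Prop.\,\ref{prop:fra}} the groups ${\rm HF}^k$ in this range are already computed from a large enough finite sub-quotient of the flow category, for which the spectral sequence has only finitely many columns.

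Compatibility with Steenrod operations is then formal. The Oh--Pozniak spectral sequence comes from a filtration of the Floer pro-spectrum by $R$-module sub-spectra, so any stable operation $\phi \in \scr A^r_A(\tau_{\le N_\mu - 3}MU)$, being natural, preserves the filtration and hence acts on every page $E_s$, raising internal degree by $r$ and commuting with $d_s$. Once both $k$ and $k+r$ lie in the range, the single-column collapse just established makes the edge isomorphism ${\rm HF}^k \xrightarrow{\sim} E_1^{0,k}$ intertwine these actions. The point deserving emphasis is that the $E_1^{0,\bullet}$-column is the \emph{standard} Steenrod module $H^*_{\rm sing}(L;A)$ and not the cohomology of some Thom spectrum over $L$; this is exactly what the diagonal-respecting condition secures (cf.~the remark after {\sc Def.\,\ref{def:ubr}}) and is already built into {\sc Thm.\,\ref{thm:ohpoz}}, since near the clean intersection Pozniak's theorem identifies the Floer data with the untwisted Morse data of $L$, making the $p = 0$ graded piece equal to $\Sigma^\infty_+ L \wedge \tau_{\le N_\mu - 3}MU$ as an $R$-module.

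For the homological statement I would run the dual spectral sequence of {\sc Rmk.\,\ref{rmk:dual}}, $E^1_{p,k} = H_k^{\rm sing}(C_p^{\tilde\xi_p};A) \Rightarrow {\rm HF}_k(M,L;A)$ with $\deg_{(p,k)} d_r = (-r,-1)$, and repeat the count with the inequalities reversed; the Steenrod action, which now lowers degree per {\sc Rmk.\,\ref{rmk:dual}}, transports along the same edge isomorphism. For a $\tau_{\le N_\mu - 3}MU$-module $Z$ with finite homotopy-support I would instead use the $Z$-coefficient version, $E_1^{p,k} = Z^{k - pN_\mu}(L) \Rightarrow {\rm HF}^k(M,L;Z)$, and note that the cellular filtration of $L$ (equivalently its Atiyah--Hirzebruch spectral sequence) forces $Z^m(L) = 0$ unless $d^{\rm min} \le m \le n + d^{\rm max}$, where $[d^{\rm min}, d^{\rm max}]$ is the support of $Z^*({\rm pt}) = \pi_{-*}Z$. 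Rerunning the column-and-differential bookkeeping with this sharper vanishing shows that $p = 0$ is the only column meeting total degree $k$ and that no differential touches it, precisely when $n - N_\mu + 2 + d^{\rm max} \le k \le N_\mu - 2 + d^{\rm min}$, which yields $Z^k(L) \cong {\rm HF}^k(M,L;Z)$ there (and dually for $Z_*$).

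I do not expect a genuine obstacle here: once {\sc Thm.\,\ref{thm:ohpoz}} is granted, the statement is pure degree bookkeeping together with the observation that a geometrically filtered $R$-module spectrum carries a Steenrod action on its spectral sequence. The only place that needs real care is the interplay of the doubly-infinite action filtration with {\sc Prop.\,\ref{prop:fra}}: one must check that passing to a large finite sub-segment of the flow category is compatible both with the $E_1$-identification above and with the Steenrod action, which follows from the naturality statements in {\sc Prop.\,\ref{prop:fra}} and the finite-range construction of continuation maps in {\sc\S\ref{ssec:indch}}.
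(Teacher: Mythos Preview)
Your proof is correct and follows exactly the paper's approach: specialize the Oh--Pozniak spectral sequence of {\sc Thm.\,\ref{thm:ohpoz}} to $H_t = 0$ with the diagonal-respecting $U$-brane, and observe that in the stated range only the $p=0$ column survives with no room for differentials. Your version is considerably more explicit than the paper's one-paragraph proof---you spell out the column bounds, the vanishing of incoming and outgoing differentials, the Steenrod compatibility via the $R$-module filtration, and the $d^{\rm min}/d^{\rm max}$ bookkeeping for $Z$-coefficients---but the argument is the same.
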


  \begin{proof}[Proof using {\sc Thm.~\ref{thm:ohpoz}}]
    The spectral sequence has $E_1$-page consisting of $N_\mu$-shifted copies of $H^*_{\rm sing}(L; A)$. Hence, there are potential differentials between the various copies, but in the total degree range \ref{eq:pssrange} they cannot occur, as the differential has total degree 1. Hence the $E_\infty$-page has exactly one nonzero homology group in this range, which is precisely $H^k_{\rm sing}(L; A)$. So this must coincide with ${\rm HF}^k(M,L;A)$. The statement on homology follows by looking at the dual spectral sequence. The range changes for generalized cohomology theories $Z$, because we must account for the different homological support of $Z^*(L)$, adjusted by $d^{\rm min}$ and $d^{\rm max}$ at the two ends.
  \end{proof}

  The main technical result we will be using for the proof of {\sc Thm.~\ref{thm:ohpoz}} is Pozniak's theorem \cite[{\sc Thm.\,3.4.11}]{Poz} below, which ensures that given any clean intersection of two Lagrangians, one can perturb one of them by a $C^1$-small Hamiltonian deformation, so that the Morse and Floer complexes are isomorphic:

  \begin{PROP}[Pozniak's Isolating Neighborhood]\label{prop:poz} 
    There exists, for each $p$, an open neighborhood $U_p$ of the connected component $C_p$ inside $M$, such that solutions to the Floer equation for a particular $(H_0, J_0)$ carefully chosen, always lie inside $U_p$, and correspond precisely to Morse solutions on $C_p$ for a suitable Morse function and metric $(f_0, g_0)$.
  \end{PROP}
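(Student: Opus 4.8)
The plan is to follow Pozniak's original argument \cite[{\sc Thm.\,3.4.11}]{Poz} essentially verbatim; the only structural difference with the exact setting he treats is that $(M,\omega)$ is merely monotone, but as observed in {\sc Rmk.\,\ref{rmk:constrbr}}, a neighborhood of a clean intersection behaves precisely as in the exact case, so nothing new is needed there. First I would set up the local model near $C_p$. By the Weinstein neighborhood theorem, identify a tubular neighborhood of $L$ in $M$ with a disk cotangent bundle $D^*L$, taking $L$ to the zero section; the clean-intersection condition $TC_p = TL \cap \Phi_1(TL)$ then forces $\Phi_1(L)$ to agree, near $C_p$, with the graph of a closed $1$-form on $L$ that vanishes exactly along $C_p$ and is nondegenerate in the directions normal to $C_p$. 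Restricted to a disk bundle this form is exact, say equal to $d\phi$, so $\phi$ is Morse--Bott with critical manifold $C_p$. I would then perturb: fix Morse data $(f_0, g_0)$ --- a Morse function and a metric --- on $C_p$, extend $f_0$ to $L$ and cut it off away from $C_p$, and replace $\phi$ by $\phi + \epsilon f_0$ near $C_p$ for small $\epsilon > 0$. The generating Hamiltonian $H_0$ of the graph Lagrangian $\mathrm{graph}\bigl(d(\phi + \epsilon f_0)\bigr)$ is then a $C^1$-small perturbation of the given cleanly-degenerate Hamiltonian, its time-one flow meets $L$ transversally in exactly $\mathrm{Crit}(f_0) \subset C_p$, and I would take $J_0$ adapted to the cotangent model near $C_p$.

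The technical heart, and the step I expect to be the main obstacle, is the confinement (``no escape'') lemma: every finite-energy Floer solution of \ref{eq:floer} for $(H_0,J_0)$ between generators lying near $C_p$ must stay inside a fixed neighborhood $U_p$. The mechanism is an energy dichotomy: a Floer strip that leaves $U_p$ sweeps out a (perturbed) pseudo-holomorphic curve crossing an annular region of definite modulus, hence has energy $\ge \hbar > 0$ by the monotonicity/isoperimetric inequality, whereas the action difference between any two generators near $C_p$ is $O(\epsilon)$, so its energy is $O(\epsilon)$; choosing $\epsilon$ small enough then forbids escape. This is exactly Pozniak's argument, and since it is entirely local --- taking place in a compact neighborhood where the monotonicity of $(M,\omega)$ plays no role --- I expect it to transfer without modification.

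Having localized to $U_p \cong D^*L$ near $C_p$ with $\Phi_1^{H_0}(L) = \mathrm{graph}\bigl(d(\phi + \epsilon f_0)\bigr)$, I would then invoke the standard comparison between Floer strips for a graph Lagrangian in a cotangent bundle and negative gradient trajectories of the generating function. Because the perturbation is $C^1$-small, no adiabatic limit is needed: the Floer operator is a small perturbation of the split operator, so the implicit function theorem produces a regularity-preserving bijection between the $0$- and $1$-dimensional Floer moduli spaces and the Morse moduli spaces of $(f_0,g_0)$ on $C_p$, matching gradings (with the expected shift by the normal Morse index of $\phi$) and --- using that a $U$-brane induces in particular an $SO$-orientation, cf. {\sc\S\ref{ssec:indch}} --- matching signs. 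This identifies the portion of the Floer complex supported near $C_p$ with the Morse complex of $(f_0,g_0)$, which is the assertion.

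The remaining bookkeeping --- the $\bb Z$-torsor of capped components and the $MU$-local system $\tilde\xi_p$ on $C_p$ appearing in {\sc Thm.\,\ref{thm:ohpoz}} --- enters only through index and orientation data, which is already supplied by the index theory of {\sc\S\ref{ssec:ind}}; so I would not expect it to cause any difficulty beyond what is present in \cite{Poz}. The one genuinely delicate point throughout is the confinement estimate of the second paragraph, and there I would lean directly on Pozniak's proof.
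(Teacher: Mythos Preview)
Your proposal is correct and follows the same approach as the paper: both defer to Pozniak's original argument \cite[{\sc Thm.\,3.4.11}]{Poz}. The paper's own proof is in fact nothing more than a citation to Pozniak (with pointers to the newer expositions in \cite{Auy} and \cite{Blakey}), together with the remark that although Pozniak states his result only at the cochain level, his proof establishes the stronger pointwise correspondence of solutions. Your sketch of the Weinstein model, the Morse--Bott perturbation, the energy-based confinement lemma, and the implicit-function-theorem comparison is an accurate outline of what Pozniak actually does, and is more detailed than what the paper provides.

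One minor comment: the final two paragraphs of your proposal (orientations via the $U$-brane, the $\bb Z$-torsor of caps, and the $MU$-local system $\tilde\xi_p$) go beyond what {\sc Prop.\,\ref{prop:poz}} itself asserts --- that material belongs to the proof of {\sc Thm.\,\ref{thm:ohpoz}}, where the paper handles it separately. For the proposition as stated you need only the unoriented identification of moduli spaces, so you could safely drop that discussion here.
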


  \begin{proof}
    See the original result \cite[3.4.11]{Poz} of Pozniak. Although he states the result only for cochain complexes, in the proof he makes the stronger claim that solutions to Floer and Morse equations agree. See also \cite[\sc \S5, \S6]{Auy} for a newer exposition, or \cite{Blakey}'s proof of his main application in {\sc \S7.3} in Floer homotopy theory over the sphere spectrum.
  \end{proof}

  \begin{RMK}\label{rmk:hidim}
    This means that when restricting to a Pozniak neighborhood, even those moduli spaces of Maslov difference greater or equal to $N_\mu$ are guaranteed to be well-defined without bubbles, since they agree with standard Morse moduli spaces. It is important to note that the $MU$-orientation extends even over these extra moduli spaces that were a priori not well-defined. We emphasize this because in the proof of {\sc Thm.~\ref{thm:ohpoz}} below we will compare the current Floer construction with the classical Morse construction from our earlier paper \cite{me}, which does use all the higher-dimensional moduli spaces as input.
  \end{RMK}

  \begin{proof}[Proof of \sc Thm.~\ref{thm:ohpoz}]
    Deform the cleanly degenerate Hamiltonian by a $C^2$-small perturbation, so that it becomes non-degenerate and the conclusion of Pozniak's result holds. The resulting capped chords in each neighborhood $\tilde U_p$ can be made to have action arbitrarily close to the original $\tilde C_p$, and hence one can ensure that in the flow category all the capped chords in one $\tilde U_p$ lie in a subsegment, with respect to the total ordering. The Cohen-Jones-Segal construction can be filtered according to this filtration on the capped chords, inducing a spectral sequence on the homology. The Steenrod algebra action is respected by this spectral sequence, since the filtration respects the action of the ring spectrum.
    
    By {\sc Prop.~\ref{prop:poz}}, the graded quotients have Floer moduli spaces (at least as stratified topological manifolds with corners) the same as the Morse moduli spaces. The gradings need not agree (for an analysis of how they differ, see \cite[\sc\S 6.1]{Auy},) and unfortunately neither do the smooth structures. Indeed, recall that Large's construction depends on a sufficiently tame gluing parameter, which unfortunately for the smooth structure we have constructed in \cite[\sc\S3]{me} seems to be the logarithmic one, which is not tame enough for Large's setup.
    
    Thankfully, the Cohen-Jones-Segal construction can be carried out in the topological category (with locally flat embeddings), and the arguments used in sections {\sc\S4,\S5} of our earlier work \cite{me} do not really require a smooth structure, but rather only the abstract existence of moduli spaces $\scr M_{i,j}$ of broken flow lines, and crucially of $\scr W_i$ of broken flow lines with open end (cf.~ {\sc Def.\,3.2} and {\sc Thm.\,3.7} of \cite{me}), which should be contractible.

    It remains to understand what the Cohen-Jones-Segal construction gives for these Morse flow categories. In \cite[\sc Main Thm. iii]{me}, we showed that any framing on the Morse flow category of a closed manifold $C$ recovers the Thom spectrum $C^\xi$ of some virtual bundle $\xi$. In our case, the flow category a priori does not have framings, but merely $U$-structures. Still, the techniques used in \cite{me} work equally well for proving the analogous result involving $MU$-local systems. That is, there is an $MU$-local system $\tilde \xi$ on $C$ that induces the given $MU$-orientations of the Morse flow category (transported from the Floer setup). Note that what we said in {\sc Rmk.~\ref{rmk:hidim}} is needed here, because the higher-dimensional moduli spaces are actually needed in the construction of this local system $\tilde \xi$, cf.~ \cite{me}. The same idea of proof used in \cite{me} adapts here with little modification to show that the Cohen-Jones-Segal construction recovers the Thom spectrum $C^\xi$ over $MU$, and hence also over the weaker $\tau_{\le N_\mu-3} MU$. 
    
    If the $U$-brane is diagonal-respecting, we need to show that these $MU$-local systems $\tilde \xi$ come from actual virtual bundles. Indeed, we note that the embedding 
    $(U_p \cap L) \times^h_{U_p} (U_p \cap L) \hookrightarrow L \times_M L$
    is, up to homotopy, equivalent to the inclusion of $C \hookrightarrow L \overset\Delta\hookrightarrow L \times_M L$. Since the $U$-brane is diagonal respecting, it means that the null-homotopy of the map $L \times^h_M L \to U/O \to BO$ is itself the zero nullhomotopy when restricted to this neighborhood. So all the moduli spaces are actually already stably framed, and the $MU$-orientation is just the induced one. Now we may invoke our original result \cite[\sc Main Thm.-iii]{me} in the setup of stable framings, to conclude that $\tilde\xi_p$ comes from an actual virtual bundle.

    The difference $\tilde \xi' - \tilde \xi$ must be given by the $\delta N_\mu$-shift of $MU$, because the Maslov grading differs precisely by $\delta N_\mu$, while the framings are the same. Finally, if the Hamiltonian is itself zero, and the $U$-brane is diagonal respecting, it follows that framing is the same as the standard one on $L$ in its cotangent bundle (via the Weinstein tubular neighborhood), which recovers the standard suspension spectrum of $L$, without the need to take any Thom space; i.e.~ $\tilde \xi$ can be taken to be zero. For a reference regarding the aforementioned cotangent bundle statement, see Blakey's recent work \cite[\sc Prop. 6.8]{Blakey}.
  \end{proof}

  \begin{RMK}
    Our result can likely be generalized to the so-called quasi-minimally degenerate intersections of Auyeung, cf.~ \cite[\sc Thm.\,6.2]{Auy}. We do not pursue this further here, and content ourselves with clean intersections.
  \end{RMK}

  \subsection{Sample application: Projective spaces}\label{ssec:proj} We now illustrate how this Steenrod algebra action can provide extra information about the possible intersections of $L$ with a Hamiltonian pushoff, beyond what ordinary Floer homology can, in the case of $\bb{RP}^n \subset \bb{CP}^n$.

  \begin{PROP}
    If $n \ge 3$ is odd, then the inclusion $\bb{RP}^n \subset \bb{CP}^n$ satisfies the conditions required in {\sc\S\ref{ssec:setup}}, and admits a diagonal-respecting $U$-brane {\sc(Def.~\ref{def:ubr})}.
  \end{PROP}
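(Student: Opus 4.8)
The plan is to verify the two halves of the statement separately: that $\bb{RP}^n \subset \bb{CP}^n$ meets the requirements of {\sc\S\ref{ssec:setup}}, and that it carries a diagonal-respecting $U$-brane, the latter by constructing an extension of $TL$ over $\bb{CP}^n$ and invoking {\sc Ex.\,\ref{exp:sourceu}}. For the axioms: since $n \ge 3$, the double cover $S^n \to \bb{RP}^n$ gives $\pi_2(\bb{RP}^n) = \pi_2(S^n) = 0$, and combined with $\pi_1(\bb{CP}^n) = 0$, $\pi_2(\bb{CP}^n) = \bb Z$, $\pi_1(\bb{RP}^n) = \bb Z/2$, the long exact sequence of the pair yields $\pi_1(\bb{CP}^n,\bb{RP}^n) = 0$ and a short exact sequence $0 \to \bb Z \xrightarrow{j} \pi_2(\bb{CP}^n,\bb{RP}^n) \xrightarrow{\partial} \bb Z/2 \to 0$, with $j$ induced by $\pi_2(\bb{CP}^n) \to \pi_2(\bb{CP}^n,\bb{RP}^n)$. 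To identify this extension I would exhibit the ``half-line'' disk $u$: a closed hemisphere of a projective line $\bb{CP}^1 \subset \bb{CP}^n$ defined over $\bb R$, cut off by $\bb{RP}^1 \subset \bb{RP}^n$. Then $\partial[u] \ne 0$ in $\pi_1(\bb{RP}^n)$, while the index computation $T\bb{CP}^n|_{\bb{CP}^1} = T\bb{CP}^1 \oplus \scr O(1)^{\oplus(n-1)}$ together with the corresponding real splitting of $T\bb{RP}^n$ along $\bb{RP}^1$ gives $\mu(u) = 2 + (n-1) = n+1$. Since $\mu \circ j$ is injective and $\mu(2[u]) = 2(n+1) = 2c_1(\bb{CP}^1) = \mu(j([\bb{CP}^1]))$, it follows that $2[u] = j([\bb{CP}^1])$; hence $j$ of a generator is $2$-divisible, the extension is non-split, $\pi_2(\bb{CP}^n,\bb{RP}^n) = \bb Z\langle [u] \rangle$, and $N_\mu = n+1 \ge 4 \ge 3$. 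Monotonicity of $L$ is then automatic: on the rank-one group $\pi_2(\bb{CP}^n,\bb{RP}^n)$ both $\int_\omega$ and $\mu$ are homomorphisms to $\bb R$ with $\int_\omega[u] > 0$ (holomorphic area) and $\mu([u]) = n+1 > 0$, so $\int_\omega = \lambda\mu$ with $\lambda > 0$; by the remark in {\sc\S\ref{ssec:setup}} this also gives monotonicity of $\bb{CP}^n$ itself.

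\textbf{The $U$-brane.} By {\sc Ex.\,\ref{exp:sourceu}} it suffices to show that the reduced class $[T\bb{RP}^n] \in \widetilde{KO}(\bb{RP}^n)$ lies in the image of the restriction $\widetilde{KO}(\bb{CP}^n) \to \widetilde{KO}(\bb{RP}^n)$; any such extension then furnishes a diagonal-respecting $U$-brane. Writing $\ell \to \bb{RP}^n$ for the tautological real line bundle, the classical identity $T\bb{RP}^n \oplus \underline{\bb R} \cong (n+1)\ell$ yields $[T\bb{RP}^n] = (n+1)([\ell]-1)$. On the other side, let $\scr O(-1) \to \bb{CP}^n$ be the tautological complex line bundle, with underlying real rank-$2$ bundle $r_{\bb C}\scr O(-1)$; since the fibre of $\scr O(-1)$ over a real point is just the corresponding real line complexified, $(r_{\bb C}\scr O(-1))|_{\bb{RP}^n} \cong \ell \oplus \ell$, so $[r_{\bb C}\scr O(-1)] - 2 \in \widetilde{KO}(\bb{CP}^n)$ restricts to $2([\ell]-1)$. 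Using now that $n$ is odd, so $n+1$ is even,
\[ [T\bb{RP}^n] \;=\; (n+1)\bigl([\ell]-1\bigr) \;=\; \tfrac{n+1}{2}\bigl(2([\ell]-1)\bigr) \;=\; \Bigl(\tfrac{n+1}{2}\bigl([r_{\bb C}\scr O(-1)] - 2\bigr)\Bigr)\Big|_{\bb{RP}^n}, \]
so $\widetilde{TL} := \tfrac{n+1}{2}\bigl([r_{\bb C}\scr O(-1)] - 2\bigr)$ extends $[TL]$ to all of $\bb{CP}^n$, and {\sc Ex.\,\ref{exp:sourceu}} applies.

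\textbf{Where the difficulty lies.} The only non-formal input in the first part is pinning $\pi_2(\bb{CP}^n,\bb{RP}^n)$ down as $\bb Z$ rather than $\bb Z \oplus \bb Z/2$, which needs the geometric half-line disk and its Maslov index; otherwise it is routine homotopy theory. I expect the genuine crux to be the $KO$-theoretic step. Its content is exactly that $2([\ell]-1)$ — the restriction of (realified) $\scr O(-1)$ — generates the image of restriction, and that $[T\bb{RP}^n] = (n+1)([\ell]-1)$ is an even multiple of the generator $[\ell]-1$ of $\widetilde{KO}(\bb{RP}^n)$ precisely when $n$ is odd. For $n$ even one has $w_1(T\bb{RP}^n) = (n+1)\,w_1(\ell) \ne 0$, whereas every real bundle on $\bb{CP}^n$ has $w_1 = 0$ (since $H^1(\bb{CP}^n;\bb F_2) = 0$), so $[T\bb{RP}^n]$ does not extend; this is why the parity hypothesis cannot be dropped, at least for $U$-branes arising from {\sc Ex.\,\ref{exp:sourceu}}.
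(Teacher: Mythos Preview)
Your proposal is correct and, for the $U$-brane, follows essentially the same route as the paper: invoke {\sc Ex.\,\ref{exp:sourceu}}, use the Euler sequence to write $[T\bb{RP}^n] = (n+1)([\ell]-1)$, and observe that the complex tautological line bundle restricts to $2([\ell]-1)$, so that parity of $n$ finishes the job. You additionally verify the conditions of {\sc\S\ref{ssec:setup}} in full (the paper relegates this to a one-line earlier example), which is a welcome elaboration rather than a different method.
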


  \begin{proof}
    By {\sc Ex.~\ref{exp:sourceu}}, it suffices to show that $T\bb{RP}^n$ extends over $\bb{CP}^n$ as a virtual bundle. By the Euler sequence, one has $[T\bb{RP}^n] = (n+1) \cdot [\scr O_\bb R(1)]$ in $\widetilde{KO}(\bb{RP}^n)$. Since $\scr O_\bb C(1)$ on $\bb{CP}^n$ is the complexification of $\scr O_\bb R(1)$, it becomes isomorphic to $2 \cdot [\scr O_\bb R(1)]$ under the restriction $\widetilde{KO}(\bb{CP}^n) \to \widetilde{KO}(\bb{RP}^n)$. Since $n$ is odd, it follows that $[T\bb{RP}^n]$ is in the image of this restriction map, as desired.
  \end{proof}

  \begin{RMK}
    For even $n$, one can rigorously show that there is no $U$-brane on $\bb{RP}^n \subset \bb{CP}^n$, since the Maslov number $N_\mu = n + 1$ must be even, cf.~ {\sc Rmk.~\ref{rmk:constrbr}}.
  \end{RMK}

  \begin{COR}
    The Floer cohomology ${\rm HF}^*(\bb{CP}^n, \bb{RP}^n; \bb F_2)$ has a well-defined action of $\scr A^*_{\bb F_2}(\tau_{\le n - 3} MU)$, for $n \ge 3$.
  \end{COR}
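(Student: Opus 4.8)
The plan is to read the corollary off {\sc Thm.\,\ref{thm:welldef}}; the only subtlety is how the Postnikov level $N_\mu-3$ interacts with the parity of $n$. When $n$ is odd, first invoke the preceding {\sc Proposition} to equip $\bb{RP}^n\subset\bb{CP}^n$ (which has minimal Maslov number $N_\mu=n+1$) with a diagonal-respecting $U$-brane. Then {\sc Thm.\,\ref{thm:welldef}} with $A=\bb F_2$ yields a well-defined action of $\scr A^*_{\bb F_2}(\tau_{\le N_\mu-3}MU)=\scr A^*_{\bb F_2}(\tau_{\le n-2}MU)$ on ${\rm HF}^*(\bb{CP}^n,\bb{RP}^n;\bb F_2)$, independent of the auxiliary Hamiltonian and almost-complex structure. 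Finally, for odd $n$ the degree $n-2$ is odd, so $\pi_{n-2}MU=0$ and the further truncation map $\tau_{\le n-2}MU\to\tau_{\le n-3}MU$ is an isomorphism on all homotopy groups, hence an equivalence of $\bb E_1$-rings; so the action just produced is already the claimed $\scr A^*_{\bb F_2}(\tau_{\le n-3}MU)$-action. This is the entire argument in the odd case, and it is the case relevant to the applications that follow.

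When $n$ is even, $N_\mu=n+1$ is odd, so by {\sc Rmk.\,\ref{rmk:constrbr}} there is no $U$-brane and {\sc Thm.\,\ref{thm:welldef}} does not apply verbatim. I would instead run the obstruction-theoretic Cohen--Jones--Segal construction of {\sc\S\ref{ssec:act}} directly with ring spectrum $R=\tau_{\le n-3}MU$ (which equals $\tau_{\le n-4}MU$, again because a trivial odd stratum is killed) on the $N_\mu$-truncated flow category $\scr F(\bb{CP}^n,\bb{RP}^n)$ of {\sc Thm.\,\ref{thm:lagflow}}. The obstructions to filling in the missing Pontryagin--Thom data then live in $\pi_k(\tau_{\le n-3}MU)$ for $k\ge N_\mu-2=n-1$, which all vanish; so the only input still needed is a $\tau_{\le n-3}MU$-orientation of the flow category. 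By the index theorem ({\sc\S\ref{ssec:ind}}, {\sc Thm.\,\ref{thm:ind}}) such an orientation is the same as a coherent family of $\tau_{\le n-3}MU$-Thom classes for the pullback of $[{\rm pr}_1^*T\bb{RP}^n]-[{\rm pr}_2^*T\bb{RP}^n]$ along $\scr M_{*,*}\to\bb{RP}^n\times^h_{\bb{CP}^n}\bb{RP}^n$. Granted such a structure, independence of the resulting action from the auxiliary data follows exactly as in {\sc\S\ref{ssec:indch}} by also $\tau_{\le n-3}MU$-orienting the continuation flow categories.

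I expect the main obstacle to be precisely this last point for even $n$. The discrepancy blocking a genuine $U$-brane --- by the Euler sequence, $[T\bb{RP}^n]-\tfrac{n-1}{2}[\scr O_\bb C(1)]|_{\bb{RP}^n}=[\scr O_\bb R(1)]$ in $\widetilde{KO}(\bb{RP}^n)$ --- is a nontrivial real line bundle, whose low-degree characteristic classes ($w_1$, $w_2$) are obstructions that Postnikov truncation does \emph{not} kill, so a dimension count alone is insufficient. One would have to verify either that these classes vanish on $\bb{RP}^n\times^h_{\bb{CP}^n}\bb{RP}^n$ (where the two pullbacks may well agree even though $w_1(T\bb{RP}^n)\neq0$), or --- more in the spirit of {\sc\S\ref{ssec:proj}} --- that they already vanish on the normal bundles of the concrete Floer moduli spaces of the standard perturbation of $\bb{RP}^n$ together with their coherence data. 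If the corollary is only wanted for odd $n$ (which suffices for everything that follows), this last paragraph is vacuous and the proof reduces to the one-line argument above.
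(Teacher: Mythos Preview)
Your odd-$n$ argument is exactly the paper's proof: invoke the preceding proposition for the $U$-brane, apply {\sc Thm.\,\ref{thm:welldef}} to get an action over $\tau_{\le N_\mu-3}MU=\tau_{\le n-2}MU$, and then observe that $\pi_{n-2}MU=0$ since $n-2$ is odd, so this is already $\tau_{\le n-3}MU$.

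The paper's own proof treats \emph{only} the odd case, reading the corollary in the context of the preceding proposition (which assumes $n$ odd) and the subsequent remark (which rules out $U$-branes for even $n$). So your even-$n$ discussion goes beyond what the paper actually establishes; the corollary as stated is slightly sloppy about hypotheses, and your caveat in the final sentence is exactly right.
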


  \begin{proof}
    By {\sc Thm.~\ref{thm:welldef}}, we get an action of the Steenrod algebra on $\tau_{\le N_\mu - 3} MU = \tau_{\le n - 2} MU$. But $n$ is odd, and $MU$ does not have any odd homotopy groups, so we practically only get a $\tau_{\le n - 3} MU$-structure.
  \end{proof}

  This becomes interesting as soon as $n \ge 5$, since $\tau_{\le 2} MU$ already has interesting cohomology operations, beyond the merely homological Bockstein homomorphism. Indeed, recall from {\sc Prop.~\ref{prop:qimu}} that we have well-defined mod-2 operations $Q_i$ of degree $2^{i+1}-1$, for all $i$ with $2^{i+1} \le n - 1$, satisfying the following rules for modules induced from an actual space $X$:
  \begin{enumerate}
    \item Power rule: $Q_i x = x^{2^{i+1}}$ for all $x \in H^1(X; \bb F_2)$.
    \item Leibniz rule: $Q_i (x \cdot y) = Q_i x \cdot y + x \cdot Q_i y$.
  \end{enumerate}
  Using these properties, the $Q_i$ operations on $\bb{RP}^n$ are very easy to describe:

  \begin{PROP}
    If $x$ denotes the generator of $H^1(\bb{RP}^n; \bb F_2)$, then
    \[ \numeq\label{eq:alphrpn} Q_i(x^k) = k \cdot x^{k+2^{i+1}-1} = \begin{cases}
      x^{k+2^{i+1}-1} & \text{if $k$ odd} \\
      0 & \text{otherwise.}
    \end{cases} \]
  \end{PROP}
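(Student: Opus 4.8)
The plan is a direct induction on $k$, using only the power rule and Leibniz rule for $Q_i$ recorded in {\sc Fact\,C} (equivalently {\sc Ex.\,\ref{exp:miln}}), which apply here because $\bb{RP}^n$ is an honest space and, by the preceding corollary, the operation $Q_i$ is defined on $H^*(\bb{RP}^n;\bb F_2)$ whenever $2^{i+1} \le n-1$. Recall that $H^*(\bb{RP}^n;\bb F_2) = \bb F_2[x]/(x^{n+1})$ with $|x| = 1$, so every identity below is to be read inside this truncated polynomial ring; in particular $x^m = 0$ for $m > n$, and one checks at the end that both sides of \ref{eq:alphrpn} respect this.

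For the base cases, $Q_i(x^0) = Q_i(1) = 0$ (apply the Leibniz rule to $1 = 1 \cdot 1$, giving $Q_i(1) = 2\,Q_i(1)$, which vanishes mod $2$), matching $0 \cdot x^{2^{i+1}-1}$; and $Q_i(x) = x^{2^{i+1}}$ by the power rule, which is the $k=1$ instance of \ref{eq:alphrpn}. For the inductive step, assume the formula holds for some $k \ge 1$. Then, applying the Leibniz rule with the sign trivial since we work mod $2$,
\[
Q_i(x^{k+1}) = Q_i(x^k \cdot x) = Q_i(x^k)\cdot x + x^k \cdot Q_i(x) = k\, x^{k+2^{i+1}-1}\cdot x + x^k \cdot x^{2^{i+1}} = (k+1)\, x^{k+2^{i+1}},
\]
which is exactly the $(k+1)$-st instance, since $(k+1) + 2^{i+1} - 1 = k + 2^{i+1}$. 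Reducing the coefficient mod $2$ then produces the case distinction displayed in \ref{eq:alphrpn}.

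There is essentially no obstacle here: the only points worth a word of care are that the $Q_i$ genuinely act on $H^*(\bb{RP}^n;\bb F_2)$ with the expected ring and operation structure — which is the content of the corollary above, together with the fact that the $\tau_{\le n-3}MU$-module attached to the space $\bb{RP}^n$ has underlying $H\bb F_2$-cohomology the ordinary one, so that the power and Leibniz rules of {\sc Fact\,C} are literally available — and that the formula is consistent with $x^{n+1}=0$. The latter holds because if $k > n$ then $x^k = 0$ and $Q_i(0) = 0 = k\,x^{k+2^{i+1}-1}$, while if $k \le n$ but $k + 2^{i+1} - 1 > n$ the right-hand side already vanishes in the truncated ring, as it must.
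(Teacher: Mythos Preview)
Your proof is correct and is essentially the same approach the paper has in mind: the paper does not write out a formal proof, but simply says the formula follows from the power rule and Leibniz rule for $Q_i$, and your induction on $k$ is precisely the straightforward way to make that remark rigorous.
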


  \begin{COR}[Weak Steenrod action computation]
    By the Albers-PSS isomorphism {\sc Thm.~\ref{thm:albpss}}, ${\rm HF}^d(\bb{CP}^n, \bb{RP}^n; \bb F_2) \cong H^d_{\rm sing}(\bb{RP}^n; \bb F_2) = \bb F_2$ for $1 \le d \le n-1$. For degrees $1 \le d \le n - 2^{i+1}$, we have that the operation $Q_i : {\rm HF}^d \to {\rm HF}^{d+2^{i+1}-1}$ lies in the PSS range \ref{eq:pssrange}, and hence is determined by the formula \ref{eq:alphrpn}.
  \end{COR}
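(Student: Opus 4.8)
\emph{Proof sketch.} The plan is to combine the Albers--PSS isomorphism {\sc Thm.\,\ref{thm:albpss}} with the explicit computation \ref{eq:alphrpn} of the $Q_i$-action on $\bb{RP}^n$, keeping careful track of the degree window in which PSS is valid.

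First I would record that for the standard inclusion $\bb{RP}^n\subset\bb{CP}^n$ one has $N_\mu=n+1$, so the PSS range \ref{eq:pssrange} becomes $n-N_\mu+2=1\le *\le N_\mu-2=n-1$. Feeding the coefficient system $A=\bb F_2$ into {\sc Thm.\,\ref{thm:albpss}} yields a graded isomorphism ${\rm HF}^d(\bb{CP}^n,\bb{RP}^n;\bb F_2)\cong H^d_{\rm sing}(\bb{RP}^n;\bb F_2)$ for $1\le d\le n-1$, and the right-hand side is $\bb F_2$ in each such degree; this establishes the first assertion. The action of $\scr A^*_{\bb F_2}(\tau_{\le n-3}MU)$ on the left side is the well-defined one furnished by {\sc Thm.\,\ref{thm:welldef}} together with the diagonal-respecting $U$-brane constructed above, and the $Q_i\in\scr A^{2^{i+1}-1}_{\bb F_2}(\tau_{\le n-3}MU)$ exist precisely when $2^{i+1}\le n-1$, by {\sc Prop.\,\ref{prop:qimu}} (equivalently {\sc Fact\,C}).

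Next, fix such an $i$. The operation $Q_i$ raises degree by $2^{i+1}-1$, so it is a map ${\rm HF}^d\to{\rm HF}^{d+2^{i+1}-1}$. To transport it across the PSS isomorphism one needs \emph{both} its source degree $d$ and its target degree $d+2^{i+1}-1$ to lie in $[1,n-1]$; the former is automatic for $d\ge1$, and the latter is exactly the condition $d\le n-2^{i+1}$ imposed in the statement. In that range, the compatibility clause of {\sc Thm.\,\ref{thm:albpss}} (``both the domain and codomain of the given Steenrod operation must lie in this range'') identifies $Q_i$ on Floer cohomology with $Q_i$ on $H^*_{\rm sing}(\bb{RP}^n;\bb F_2)$, which by \ref{eq:alphrpn} is the isomorphism $H^d\to H^{d+2^{i+1}-1}$ for $d$ odd and $0$ for $d$ even. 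This is the content of the corollary.

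The only delicate point---and the reason the hypothesis is $d\le n-2^{i+1}$ rather than merely $d\le n-1$---is that once $d+2^{i+1}-1$ exceeds $n-1$ the target of $Q_i$ leaves the PSS window, where ${\rm HF}^*$ need no longer agree with $H^*_{\rm sing}(\bb{RP}^n)$ but may receive contributions from other capped components, i.e.\ from differentials in the Oh--Pozniak spectral sequence {\sc Thm.\,\ref{thm:ohpoz}}; outside the stated range we therefore make no claim. Everything else is formal, so I do not expect any serious obstacle here; this corollary is essentially just bookkeeping with the PSS range.
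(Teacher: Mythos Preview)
Your proposal is correct and matches the paper's approach exactly: the corollary is stated without proof in the paper, being an immediate consequence of plugging $N_\mu=n+1$ into the PSS range \ref{eq:pssrange} and invoking the compatibility of the Albers--PSS isomorphism with the Steenrod action. Your unpacking of the range arithmetic (both source $d$ and target $d+2^{i+1}-1$ must lie in $[1,n-1]$) is precisely the bookkeeping the paper leaves implicit.
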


  In fact, with more work, the Floer cohomology of $\bb {RP}^n$ can be computed even in the degrees $0$ and $n$ modulo $N_\mu = n+1$, which the Albers-PSS map does not see:

  \begin{THM}[due to Oh \cite{Oh93}]
     ${\rm HF}^d(\bb{CP}^n, \bb{RP}^n) = \bb F_2\<x_d\>$ for all $d \in \bb Z/(n+1)$.
  \end{THM}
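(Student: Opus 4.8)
The plan is to reduce the statement to the vanishing of a single differential in the Oh-Pozniak spectral sequence, and then to kill that differential using Oh's disk count. First I would run the spectral sequence of {\sc Thm.\,\ref{thm:ohpoz}} with $H_t = 0$: its $E_1$-page is $E_1^{p,k} = H^{k - p(n+1)}(\bb{RP}^n;\bb F_2)$, with $d_r$ of bidegree $(r,1)$. Since $H^*(\bb{RP}^n;\bb F_2)$ is concentrated in the $n+1$ consecutive degrees $0,\dots,n$ while the periodicity shift per filtration step is also $n+1$, the blocks $\{p(n+1),\dots,p(n+1)+n\}$ tile $\bb Z$; hence $E_1^{*,k}$ is one-dimensional in every total degree $k$. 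A comparison of bidegrees then shows that the only differential which can be nonzero is a single $d_1 \colon H^n(\bb{RP}^n;\bb F_2) \to H^0(\bb{RP}^n;\bb F_2)$ running between consecutive periodicity blocks, and, both groups being $\bb F_2$, it is either zero or an isomorphism. So the entire theorem is equivalent to showing this $d_1$ vanishes. I would also note, as a partial check, that the Albers-PSS isomorphism {\sc Thm.\,\ref{thm:albpss}} already forces ${\rm HF}^d \cong H^d_{\rm sing}(\bb{RP}^n;\bb F_2) = \bb F_2$ in the range \ref{eq:pssrange}, i.e.\ for $1 \le d \le n-1$, so that only the residues $d \equiv 0$ and $d \equiv n \pmod{n+1}$ are actually at stake.

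To prove $d_1 = 0$ I would pass to the module structure of ${\rm HF}^*(\bb{CP}^n, \bb{RP}^n; \bb F_2)$ over the small quantum cohomology $QH^*(\bb{CP}^n;\bb F_2) \cong \bb F_2[h]/(h^{n+1}-1)$ with $|h| = 2$ (Novikov parameter specialized to $1$). As $h\cdot h^{n} = 1$, the class $h$ is a unit, and therefore acts invertibly on any module, so $h \colon {\rm HF}^k \to {\rm HF}^{k+2}$ is an isomorphism for every $k$, and $\dim_{\bb F_2}{\rm HF}^k$ depends only on $k$ modulo $\gcd(2,n+1)$. The Albers-PSS values from the previous paragraph then pin this constant down: degree $1$ gives the value $1$, and for $n \ge 3$ (when $n+1$ is even) degree $2$ gives $1$ on the other residue class as well, so $\dim_{\bb F_2}{\rm HF}^d = 1$ for all $d \in \bb Z/(n+1)$; one then names the generators $x_d$. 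The small cases $n = 1, 2$, where \ref{eq:pssrange} is empty or minimal, I would handle directly --- after a small perturbation the Floer complex becomes a two- or three-generator Morse complex whose only possible differential counts minimal-Maslov disks through a point, an even number.

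The main obstacle is exactly the non-formal ingredient underneath the last step: the quantum-module structure together with the relation $h^{n+1}=1$, or --- equivalently, and this is how Oh originally argues \cite{Oh93} --- the statement that the mod-$2$ count of Maslov-index-$(n+1)$ holomorphic disks in $\bb{CP}^n$ with boundary on $\bb{RP}^n$ through a generic point is even, which is what makes the $d_1$ above vanish. This is not established in the present paper, so I would simply cite \cite{Oh93} for it; everything else is bookkeeping with {\sc Thms.\,\ref{thm:ohpoz}, \ref{thm:albpss}} and the $N_\mu$-periodicity of the construction.
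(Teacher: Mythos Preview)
The paper does not prove this statement at all: it is attributed to Oh and simply cited from \cite{Oh93}, with no proof environment following the theorem. So there is nothing to compare your argument against on the paper's side.

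Your proposal is nonetheless a reasonable and essentially correct sketch of how one would prove the result. The reduction via the Oh--Pozniak spectral sequence to a single possible $d_1$ between $H^n$ and $H^0$ is accurate, as is the observation that the Albers--PSS range \ref{eq:pssrange} already settles the degrees $1 \le d \le n-1$. Your use of the $QH^*(\bb{CP}^n;\bb F_2)$-module structure is a standard and clean way to finish: the relation $h^{n+1}=1$ makes $h$ a unit, so multiplication by $h$ is a degree-$2$ isomorphism on ${\rm HF}^*$, and together with the known values in the PSS range this pins down every degree. You are right that the genuinely non-formal input --- the quantum relation, equivalently the evenness of the minimal-Maslov disk count through a point --- is precisely what one must import from \cite{Oh93}; the present paper does not establish the $QH^*$-module structure on ${\rm HF}^*(M,L)$ either, so that too is an external citation. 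In short: you have supplied more than the paper does, and what you supply is sound, with the honest caveat that the crux still rests on Oh's computation.
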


  This forces the Oh-Pozniak spectral sequence to collapse, so we get the stronger

  \begin{THM}[Strong Steenrod action computation]\label{thm:strong}
    For $0 \le d \le n-2^{i+1}+1$, we have that
    \[ Q_i (x_d) = d \cdot x_{d+2^{i+1}-1} = \begin{cases}
      x_{d+2^{i+1}-1} & \text{if $d$ odd} \\
      0 & \text{otherwise.}
    \end{cases} \]
  \end{THM}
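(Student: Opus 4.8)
The plan is to derive {\sc Thm.\,\ref{thm:strong}} entirely from the Oh--Pozniak spectral sequence {\sc(Thm.\,\ref{thm:ohpoz})} specialized to $H_t = 0$, together with Oh's computation of ${\rm HF}^*(\bb{CP}^n,\bb{RP}^n;\bb F_2)$ and the power rule for the $Q_i$. When $H_t = 0$ the intersection $\bb{RP}^n \cap \Phi_1(\bb{RP}^n)$ is all of $\bb{RP}^n$ with its canonical cap; since the chosen $U$-brane is diagonal-respecting and $N_\mu = n+1$, {\sc Thm.\,\ref{thm:ohpoz}} furnishes a spectral sequence of $\scr A^*_{\bb F_2}(\tau_{\le n-3}MU)$-modules
\[ E_1^{p,k} = H^{k-p(n+1)}(\bb{RP}^n; \bb F_2) \;\Longrightarrow\; {\rm HF}^k(\bb{CP}^n, \bb{RP}^n; \bb F_2), \]
with differential $d_r$ of bidegree $(r,1)$ in $(p,k)$.

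First I would dispose of the differentials by counting degrees. For fixed total degree $k$, the term $E_1^{p,k}$ is nonzero exactly when $p(n+1)$ lies in the length-$n$ interval $[k-n,k]$, and since consecutive multiples of $n+1$ are spaced $n+1>n$ apart there is exactly one such $p$, call it $p(k)$; moreover $p(k+1)-p(k)\in\{0,1\}$, the value $1$ occurring only when $k-p(k)(n+1)=n$. As $d_r$ has $(p,k)$-bidegree $(r,1)$, it follows that on every page the only differential that can be nonzero is $d_1$, and then only from the top class $x^n\in H^n(\bb{RP}^n)$ of one $(n+1)$-periodic block to the unit $1\in H^0(\bb{RP}^n)$ of the next. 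By Oh's theorem \cite{Oh93}, ${\rm HF}^k(\bb{CP}^n,\bb{RP}^n;\bb F_2)=\bb F_2$ for \emph{every} $k$; a nonzero such $d_1$ would annihilate the two groups ${\rm HF}^{p(n+1)+n}$ and ${\rm HF}^{(p+1)(n+1)}$, a contradiction. Hence every $d_1$ vanishes and $E_1=E_\infty$. In particular ${\rm HF}^*$ carries a filtration which in each total degree has a single nonzero graded piece, with associated graded $\bigoplus_p \Sigma^{p(n+1)}H^*(\bb{RP}^n;\bb F_2)$ as a module over $\scr A^*_{\bb F_2}(\tau_{\le n-3}MU)$; by {\sc Thm.\,\ref{thm:ohpoz}} the summand $E_\infty^{0,*}=H^*(\bb{RP}^n;\bb F_2)$ carries its genuine singular Steenrod action, so the $Q_i$ there satisfy the power rule of {\sc Fact\,C}.

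Then I would read off the operation in the stated range. For $0\le d\le n-2^{i+1}+1$ the class $x_d\in{\rm HF}^d$ lies in degree $d\in[0,n]$, hence in filtration level $p=0$; its image $Q_i x_d\in{\rm HF}^{d+2^{i+1}-1}$ lies in degree $d+2^{i+1}-1\in[2^{i+1}-1,n]\subseteq[0,n]$, again in filtration level $p=0$. Therefore the operation $Q_i:{\rm HF}^d\to{\rm HF}^{d+2^{i+1}-1}$ is, via the collapse, identified with the singular operation $Q_i:H^d(\bb{RP}^n;\bb F_2)\to H^{d+2^{i+1}-1}(\bb{RP}^n;\bb F_2)$, which by the formula \ref{eq:alphrpn} for $\bb{RP}^n$ sends $x^d$ to $d\cdot x^{d+2^{i+1}-1}$. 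Translating back yields $Q_i(x_d)=d\cdot x_{d+2^{i+1}-1}$, as claimed.

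The only place the hypothesis $d\le n-2^{i+1}+1$ enters is this last step, and this is also the main obstacle: it is precisely the condition guaranteeing that the target of $Q_i$ stays within the same $(n+1)$-periodic copy of $H^*(\bb{RP}^n)$ as the source, so that $Q_i$ is computed on the associated graded; beyond this range the target drops to the next filtration level and $Q_i(x_d)$ could a priori pick up a ``hidden'' contribution invisible to $E_\infty$. (On the overlap $1\le d\le n-2^{i+1}$ one may instead cite the Albers--PSS isomorphism {\sc Thm.\,\ref{thm:albpss}} directly; the argument above additionally covers the endpoint degrees $d=0$ and $d=n-2^{i+1}+1$, at which one factor of the operation falls outside the Albers--PSS range \ref{eq:pssrange}.) Two points I would take care to justify: that the identification $E_\infty^{0,*}\cong H^*(\bb{RP}^n;\bb F_2)$ is one of Steenrod modules (part of {\sc Thm.\,\ref{thm:ohpoz}}), and that convergence poses no difficulty since in any bounded range of degrees only finitely many filtration levels are nonzero, so the single group ${\rm HF}^k$ and the single map $Q_i$ between two such are extracted unambiguously.
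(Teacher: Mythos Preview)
Your proposal is correct and follows exactly the approach the paper indicates: the paper's proof is the single sentence ``This forces the Oh--Pozniak spectral sequence to collapse, so we get the stronger\ldots'' preceding the theorem, and you have written out that argument in full (degree counting to show only $d_1$ could be nonzero, Oh's computation to kill it, then reading $Q_i$ off the associated graded since source and target both sit in filtration $p=0$). Your additional remarks on why the endpoints $d=0$ and $d=n-2^{i+1}+1$ are covered here but not by Albers--PSS are exactly right.
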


  \begin{RMK}\label{rmk:transgress}
    We still do not know what $Q_i$ look like when they transgress the boundary between the various $(n+1)$-shifted copies of $H^*_{\rm sing}(\bb{RP}^n; \bb F_2)$, i.e.~ in degrees
    \[n - 2^{i+1} + 2 \le d \le n.\]
    This is because the spectral sequence only gives us the associated graded. We suspect that the extra information is hidden in the real Gromov-Witten theory of $\bb{RP}^n \subset \bb{CP}^n$, but do not pursue this question further in this paper.
  \end{RMK}

  Now, we turn to an application. A natural question to ask, in the spirit of the Arnol'd conjecture, is what kinds of manifolds can arise as clean intersections between $\bb{RP}^n$ and a Hamiltonian perturbation thereof, in $\bb{CP}^n$. We only consider the two ``easiest cases,'' namely when the intersection has a single connected component, and then when it is the disjoint union of a single point and some other connected component. We begin by gleaning consequences that are purely homological, and end this section with two non-trivial applications of the Steenrod action.

  \begin{THM}[Connected clean intersection]\label{thm:conn}
    The only connected clean intersection between $\bb{RP}^n \subset \bb{CP}^n$ and a Hamiltonian isotopy thereof is the whole of $\bb{RP}^n$.
  \end{THM}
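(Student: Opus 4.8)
The plan is to feed the clean intersection into the Oh--Pozniak spectral sequence of {\sc Thm.\,\ref{thm:ohpoz}} with $\bb F_2$-coefficients; only the underlying $\bb F_2$-vector-space content is needed, which is classical (Pozniak, Oh) and does not require a $U$-brane, so the argument applies for all $n \ge 2$. Write $C := \bb{RP}^n \cap \Phi_1(\bb{RP}^n)$ for the clean intersection, assumed connected. As $C$ is a closed subset of $\bb{RP}^n$ and a (boundaryless) submanifold thereof, $\dim C \le n$. The single component $C$ carries a $\bb Z$-torsor of capped lifts $\{\tilde C_p\}_{p\in\bb Z}$, with $E_1$-page $\bigoplus_p H^*(C^{\tilde\xi_p};\bb F_2)$ and ${\rm rk}\,\tilde\xi_{p+1} - {\rm rk}\,\tilde\xi_p = N_\mu = n+1$. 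Using the (automatic) $\bb F_2$-Thom isomorphism, $E_1$ becomes an $(n+1)$-periodic array of shifted copies of $H^*(C;\bb F_2)$, the $p$-th copy supported in total degrees ${\rm rk}\,\tilde\xi_p \le k \le {\rm rk}\,\tilde\xi_p + \dim C$.

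The heart of the argument is a degree count. A differential $d_r$ carries the $p$-th periodic copy into the $(p+r)$-th while raising the total degree by exactly $1$; since consecutive copies are separated by $r(n+1) - \dim C \ge 1$ units of total degree, a nonzero $d_r$ needs $\dim C + 1 \ge r(n+1)$, which (as $\dim C \le n$) forces $r=1$ and $\dim C = n$. Hence, if $\dim C < n$ the spectral sequence degenerates at $E_1$, so ${\rm HF}^*(\bb{CP}^n,\bb{RP}^n;\bb F_2)$ agrees with $E_1$; but then $H^*(C;\bb F_2)$, occupying only $\dim C + 1 \le n$ out of every $n+1$ consecutive degrees with no overlap between copies, leaves ${\rm HF}^*$ zero in at least one residue class modulo $n+1$. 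This contradicts the theorem of Oh \cite{Oh93} that ${\rm HF}^d(\bb{CP}^n,\bb{RP}^n;\bb F_2) = \bb F_2$ for every $d \in \bb Z/(n+1)$. Therefore $\dim C = n$, and a full-dimensional closed submanifold of the connected $n$-manifold $\bb{RP}^n$ is both open (invariance of domain) and closed, hence a union of connected components; since $C$ is nonempty (otherwise ${\rm HF}^* = 0$, again contradicting Oh), we conclude $C = \bb{RP}^n$.

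I expect the degree count to be the main obstacle---not technically delicate, but the one place where three numerical inputs must be made to line up exactly: the period $N_\mu = n+1$, the dimension bound $\dim C \le n$ forced by $C \subseteq \bb{RP}^n$, and the fullness of ${\rm HF}^*$ from \cite{Oh93}. Minor points to be careful about: the classes $\tilde\xi_p$ need only be $\bb F_2$-local systems rather than honest virtual bundles---the $\bb F_2$-Thom isomorphism being automatic---so neither the diagonal-respecting condition nor the parity of $n$ plays any role here; and one should make sure that a clean intersection really is a closed boundaryless submanifold, so that $\dim C = n$ genuinely forces $C = \bb{RP}^n$.
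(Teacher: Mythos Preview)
Your proof is correct and follows the same approach as the paper: feed the connected clean intersection into the Oh--Pozniak spectral sequence with $\bb F_2$-coefficients, and use Oh's computation of ${\rm HF}^*$ to force $\dim C = n$. The only difference is that the paper skips your degeneracy argument entirely---it simply observes that if $\dim C < n$ then $E_1$ already vanishes in some total degree, and since $E_\infty$ is a subquotient of $E_1$ this zero persists, contradicting Oh; your explicit verification that all $d_r$ vanish is correct but unnecessary, and your invariance-of-domain argument for ``$\dim C = n \Rightarrow C = \bb{RP}^n$'' fills in a step the paper leaves tacit.
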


  \begin{proof}[Proof, using only Floer cohomology]
    Assume $C = L \cap \Phi_1(L)$ has dimension $d < n$. The Oh-Pozniak spectral sequence must have $E_\infty$ page given by a single copy of $\bb F_2$ in each total degree. But the $E_1$ page of the spectral sequence already must have at least one total degree where it is entirely zero, since the homological amplitude of $C$ is $d + 1 < N_\mu$. This forces $C$ to have dimension $n$, i.e.~ $C = \bb{RP}^n$.
  \end{proof}

  \begin{THM}[Point + Connected, part I]\label{thm:pt+connI}
    If $\{p\} \sqcup C = \bb{RP}^n \cap \Phi_1(\bb{RP}^n)$ is a clean intersection between $\bb{RP}^n$ and a Hamiltonian isotopy $\Phi_1(\bb{RP}^n)$ inside $\bb{CP}^n$, with $C$ connected, then $C$ must be $(n - 1)$-dimensional, and its cohomology with $\bb F_2$-coefficients must be the same as that of $\bb{RP}^{n-1}$, i.e.~ $H^i(C; \bb F_2) = \bb F_2\<y_i\>$ for $0 \le i \le n-1$.
  \end{THM}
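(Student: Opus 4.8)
The plan is to run the Oh--Pozniak spectral sequence of {\sc Thm.\,\ref{thm:ohpoz}} with $\bb F_2$-coefficients for the given cleanly degenerate Hamiltonian, and to pin down $C$ from the shape of its $E_1$-page together with Oh's computation ${\rm HF}^k(\bb{CP}^n,\bb{RP}^n;\bb F_2)=\bb F_2$ for all $k$. Here $N_\mu=n+1$, and each of the two actual intersection components $\{p\}$ and $C$ gives a $\bb Z$-torsor of capped components with caps differing in total degree by multiples of $N_\mu$, so within one period $[k_0,k_0+n]$ of the total degree there is exactly one capped copy of $\{p\}$ and one of $C$. Since $H\bb F_2$ is an $MU$-algebra, every $MU$-local system is canonically $\bb F_2$-oriented, so the Thom isomorphism reduces the $E_1$-contributions to degree shifts: the copy of $\{p\}$ contributes a single $\bb F_2$ in one total degree mod $n+1$, and the copy of $C$ contributes $H^\ast(C;\bb F_2)$ supported in the $\dim C+1$ consecutive total degrees $[r,r+\dim C]$, $r$ being the rank of its local system.

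First I would determine $\dim C$. Because the differentials only decrease ranks, $\dim E_1^{\ast,k}\ge\dim E_\infty^{\ast,k}=1$ for every $k$; as the copy of $\{p\}$ touches only one residue mod $n+1$, the support of $H^\ast(C;\bb F_2)$ must cover the remaining $n$ residues, forcing $\dim C+1\ge n$. On the other hand $\dim C\le n-1$: if $\dim C=n$ then $C$, being a connected compact boundaryless submanifold of the $n$-manifold $\bb{RP}^n$, is clopen, hence equals $\bb{RP}^n$; but then $\{p\}\subset\bb{RP}^n=C$ contradicts that $\{p\}\sqcup C$ is a disjoint union. Thus $\dim C=n-1$, and since the support of $H^\ast(C;\bb F_2)$ lies in $\{0,\dots,n-1\}$ and has at least $n$ elements, it is all of $\{0,\dots,n-1\}$; in particular $H^i(C;\bb F_2)\ne 0$ for every $0\le i\le n-1$, and by connectedness and $\bb F_2$-Poincar\'e duality $H^0(C;\bb F_2)=H^{n-1}(C;\bb F_2)=\bb F_2$.

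The crux is to show the spectral sequence degenerates at $E_1$, which then upgrades each $H^i(C;\bb F_2)\ne 0$ to $H^i(C;\bb F_2)=\bb F_2$ by matching ranks degree-by-degree with ${\rm HF}^\ast$. The key observation is that in {\sc Thm.\,\ref{thm:ohpoz}} the filtration degree $p$ \emph{directly indexes} the capped components, so on a fixed component $p$ is constant, while $d_r$ has $(p,k)$-bidegree $(r,1)$ with $r\ge 1$, i.e.\ it raises $p$ strictly and raises the total degree by exactly $1$. The copies of $\{p\}$ sit precisely in the "gap" degrees between consecutive degree-intervals $[c_l,c_l+n-1]$ occupied by the copies of $C$ (again forced by $\dim E_\infty^{\ast,k}=1$ everywhere). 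A differential out of the copy $C_l$ at total degree $c_l+i$ lands in total degree $c_l+i+1$: if $i<n-1$ this degree still lies inside the interval of $C_l$, which has the same filtration degree as the source, contradicting that $d_r$ raises $p$; if $i=n-1$ it lands in an adjacent copy of $\{p\}$, whose rank is already $1=\dim E_\infty$, so the differential must vanish. The symmetric argument rules out differentials entering $C_l$. Hence every differential vanishes, $E_1=E_\infty$, and $H^\ast(C;\bb F_2)\cong H^\ast(\bb{RP}^{n-1};\bb F_2)$, as claimed.

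I expect the main obstacle to be exactly this degeneration step: one must recognize that, once $\dim C=n-1$ is known, $C$'s cohomology already occupies $n$ of the $n+1$ available degrees per period and the single remaining slot is pinned by the zero-dimensional component with matching rank, so there is literally no room for a differential --- the point being to exploit that differentials cannot move within a single component and cannot destroy rank in a degree where ${\rm HF}^\ast$ is one-dimensional. The remaining ingredients (identifying the $E_1$-page via the $\bb F_2$-Thom isomorphism and the torsor of caps, the elementary bound $\dim C\le n-1$, and the degree bookkeeping) are routine.
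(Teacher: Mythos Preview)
Your argument is correct and follows essentially the same route as the paper's proof: run the Oh--Pozniak spectral sequence, use Oh's computation to force the supports of $\{p\}$ and $C$ to tile $\bb Z/(n+1)$ exactly (hence $\dim C=n-1$), and then argue that the spectral sequence degenerates so that the ranks match degreewise. The paper compresses the degeneration step into the single sentence ``any differential would kill the class coming from the point $p$''; your explicit filtration argument (that a differential landing inside the same $C_l$-interval is forbidden because $d_r$ strictly raises filtration, while one landing on a $\{p\}$-slot is forbidden because that slot already has the minimal possible rank) is precisely the justification the paper leaves implicit.
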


  \begin{RMK}\label{rmk:exofc}
    Before proving this, we note that the case $C = \bb{RP}^{n-1}$ actually does occur in reality, so the statement above is not vacuous. Indeed, the standard Morse-Bott function
    \[ f([x_0 : \cdots : x_n]) = \frac{\pm |x_0|^2}{|x_0|^2 + \cdots+ |x_n|^2} \]
    can be used to push off $\bb{RP}^n$ in a Weinstein neighborhood of it, and get precisely two connected clean components, one being a point $\{p\} := \{x_1 = \cdots = x_n = 0\}$, and the other being $C := \bb{RP}^{n-1}$, given by $\{x_0 = 0\}$. One has $\mu(p) \in \{0,n\}$, depending on the sign. We would conjecture that $C = \bb{RP}^{n-1}$ is the only such example, (at least up to homotopy-equivalence), and that $\mu(p) \in \{0,n\}$ always, but our techniques are only powerful enough to prove certain results about the cohomology thereof, and its Steenrod algebra action.
  \end{RMK}

  \begin{proof}[Proof, using only Floer cohomology]
    Once again, the $E_\infty$ page must be nontrivial in each total degree, and hence so does the $E_1$ page. Since $C$ must have dimension at most $n - 1$, the only way this can happen is if the cohomological supports of $C$ and the point $p$ completely tile $\bb Z/(n+1)$. In particular, there must be no differential in the spectral sequence, since this would kill the class coming from the point $p$, leading to the same contradiction as before. So the spectral sequence collapses, and $C$ must have dimension $n - 1$ and cohomology $\bb F_2$ in each degree. Cf.~ also {\sc Fig.~\ref{fig:sss}}.
  \end{proof}

  \begin{figure}[h]
    \centering
    \includegraphics[scale=1.5]{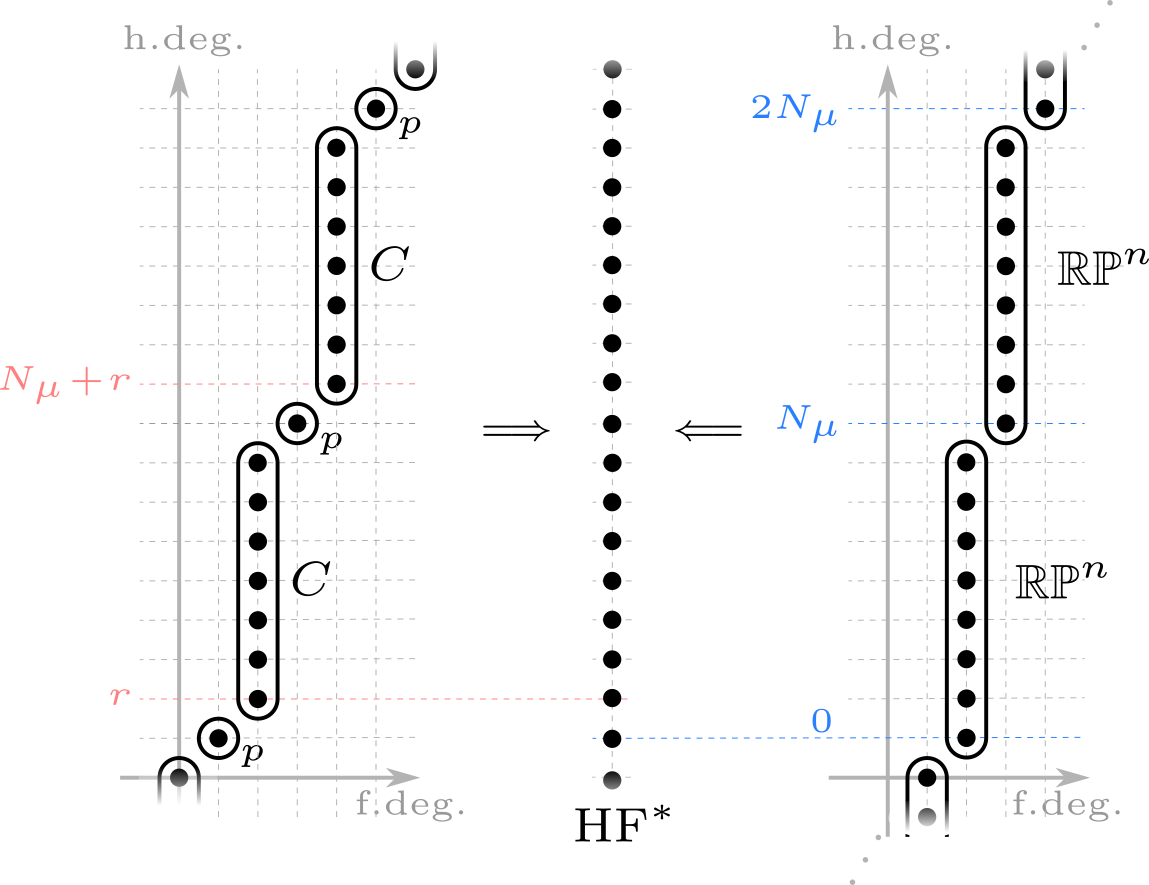}
    \caption{Depiction of the spectral sequence for clean intersection $\{p\} \sqcup C$ (left) and for the whole of $\bb{RP}^n$ (right), in the sample case $n = 7$ and $r = 1$, following notation conventions of part II of the theorem.}\label{fig:sss}
  \end{figure}

  \begin{THM}[Point + Connected, part II] \label{thm:pt+connII}
    Assume $n \ge 3$ odd. Further, denote by $r \in \{0, \ldots, n\}$ the unique residue such that $\mu(p) \equiv r - 1 \; ({\rm mod\;} n+1)$, and by $y_d$ the generator of $H^d(C; \bb F_2)$ for $0 \le d \le n-1$. Then, for all $0 \le d \le n - \max(1,r) - 2^{i+1} + 1$, one has
    \begin{equation}\label{eq:idq}
      Q_i (y_d) = (r + d) \cdot y_{d+2^{i+1}-1} + r \cdot y_{2^{i+1}-1}y_d.
    \end{equation}
    Dually, for all $0 \le d \le \min\big\{n-1, \left[(r - 2)\;{\rm mod}\;(n+1)\right]\big\} - 2^{i+1} + 1$, one has
    \begin{equation}\label{eq:idqdual}
      Q_i (y_d) = (r + d + 1) \cdot y_{d+2^{i+1}-1} + (r + 1) \cdot y_{2^{i+1}-1} y_d,
    \end{equation}
    where $[(r - 2)\;{\rm mod}\;(n+1)]$ is the remainder of $r - 2$ modulo $n + 1$. Refer to {\sc Fig.~\ref{fig:ranges}} for a visualization of the two ranges, as functions of $r$.
  \end{THM}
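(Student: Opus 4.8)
The plan is to run the Oh--Pozniak spectral sequence of {\sc Thm.\,\ref{thm:ohpoz}} for the clean intersection $\{p\}\sqcup C$ and compare it, degree by degree and operation by operation, with the already-known Steenrod module ${\rm HF}^*(\bb{CP}^n,\bb{RP}^n;\bb F_2)$. By {\sc Thm.\,\ref{thm:pt+connI}} this spectral sequence collapses at $E_1$. After choosing the cap normalisations that make the contribution of $\{p\}$ the class $x_{r-1}$, the requirement that the supports of the blocks $H^*(C_p^{\tilde\xi_p};\bb F_2)$ and of the point tile $\bb Z/(n+1)$ forces ${\rm rk}\,\tilde\xi_p\equiv r\pmod{n+1}$; taking the rank-$r$ representative (with the evident adjustment when $r=0$), the Thom isomorphism $\Phi\colon H^*(C;\bb F_2)\cong H^{*+r}(C^{\tilde\xi_p};\bb F_2)$ identifies the generator $y_d$ with the generator $x_{d+r}$ of ${\rm HF}^{d+r}$. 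The arithmetic range $0\le d\le n-\max(1,r)-2^{i+1}+1$ is exactly the range in which (i) both $x_{d+r}$ and $x_{d+r+2^{i+1}-1}$ lie inside the single block $C^{\tilde\xi_p}$, away from the point, so that the coarse neighbourhood filtration on ${\rm HF}^*$ does not jump and the $Q_i$-action on $E_\infty={\rm gr}\,{\rm HF}^*$ agrees with the honest $Q_i$-action on ${\rm HF}^*$; and (ii) {\sc Thm.\,\ref{thm:strong}} applies, giving $Q_i(x_{d+r})=(d+r)\,x_{d+r+2^{i+1}-1}$.

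Next I would make the Thom twist explicit. Since $H\bb F_2$ is an algebra over $MU$, the $MU$-local system $\tilde\xi_p$ carries a canonical $H\bb F_2$-Thom class $U$, and $q_i(\tilde\xi_p)\in H^{2^{i+1}-1}(C;\bb F_2)$ is by definition the class with $Q_i(U)=U\cup q_i(\tilde\xi_p)$ (the class of {\sc Def.\,\ref{def:qi}}); the Leibniz rule of {\sc Fact\,C} then gives $Q_i\circ\Phi=\Phi\circ\bigl(Q_i+q_i(\tilde\xi_p)\cup(-)\bigr)$ on $H^*(C;\bb F_2)$. Feeding this into the previous step yields
\[ Q_i^{C}(y_d)\ +\ q_i(\tilde\xi_p)\cup y_d\ =\ (d+r)\cdot y_{d+2^{i+1}-1}, \]
so the whole statement \eqref{eq:idq} reduces to the single computation $q_i(\tilde\xi_p)=r\cdot y_{2^{i+1}-1}$.

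That computation is the heart of the matter and the step I expect to be the main obstacle. Unwinding the proof of {\sc Thm.\,\ref{thm:ohpoz}}: $\tilde\xi_p$ is the virtual bundle over $C$ whose Thom spectrum the Cohen--Jones--Segal construction of the Pozniak-transported Morse flow category recovers, and --- the $U$-brane being diagonal-respecting --- its $MU$-orientation is induced by the genuine stable framing coming from the restriction of the $U$-brane to $(U_p\cap \bb{RP}^n)\times^h_{U_p}(U_p\cap\bb{RP}^n)\simeq C$. The $U$-brane itself was built from the extension of $[T\bb{RP}^n]=\tfrac{n+1}{2}[\scr O_\bb C(1)|_{\bb R}]$ over $\bb{CP}^n$; tracing this through Pozniak's isolating neighbourhood {\sc Prop.\,\ref{prop:poz}} exhibits $\tilde\xi_p$ as restricted from a virtual bundle on $\bb{RP}^n$ along the inclusion $C\hookrightarrow\bb{RP}^n$ of the clean intersection. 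Since $\widetilde{KO}(\bb{RP}^n)$ is cyclic on $[\scr O_\bb R(1)]-1$, any such bundle of rank $r$ has $q_i$-class $r\cdot x^{2^{i+1}-1}$ by the power rule of {\sc Fact\,C}; as $C\hookrightarrow\bb{RP}^n$ is an isomorphism on $H^1(-;\bb F_2)$, this restricts to $r\cdot y_1^{2^{i+1}-1}$, which inside the one-dimensional group $H^{2^{i+1}-1}(C;\bb F_2)=\bb F_2\langle y_{2^{i+1}-1}\rangle$ is the required $r\cdot y_{2^{i+1}-1}$; substituting back produces \eqref{eq:idq}.

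Finally, \eqref{eq:idqdual} is obtained by running the identical argument on the homological Oh--Pozniak spectral sequence of {\sc Rmk.\,\ref{rmk:dual}}, whose $E^1$-page is $H^{-*}_{\rm sing}(C_p^{-\tilde\xi_p-TC_p};\bb F_2)$. Reversing the flow reflects the point's residue to one congruent to $n-\mu(p)$ --- playing the role of ``$r+1$'' modulo $2$ --- so the same reasoning now gives $Q_i^C(y_d)=(d+r+1)\,y_{d+2^{i+1}-1}+\sigma_i\cdot y_{2^{i+1}-1}\cup y_d$ on the correspondingly shifted range, where $\sigma_i\in\bb F_2$ is the characteristic number of the dual twist $-\tilde\xi_p-TC_p$. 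Comparing with \eqref{eq:idq} forces $y_{2^{i+1}-1}\cup y_d=(\sigma_i+r)\,y_{d+2^{i+1}-1}$; evaluating at $d=0$ gives $\sigma_i=r+1$, which is \eqref{eq:idqdual}, and feeding that back yields the ring relations $y_{2^{i+1}-1}\cup y_d=y_{d+2^{i+1}-1}$ of {\sc Cor.\,\ref{cor:pt+conn}} together with the nonvanishing of $q_i(TC_p)$ of {\sc Thm.\,\ref{thm:pt+connIII}}. The one genuinely delicate ingredient throughout is thus the geometric identification of $\tilde\xi_p$ and its $q_i$-classes, which is precisely what the explicit form of the chosen $U$-brane, combined with {\sc Prop.\,\ref{prop:poz}}, is used for.
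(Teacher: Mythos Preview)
Your overall architecture --- collapse of the Oh--Pozniak spectral sequence, identification $x_{r+d}\leftrightarrow \theta y_d$, Leibniz rule for $Q_i(\theta y_d)$ --- matches the paper exactly, and you correctly arrive at
\[
Q_i^C(y_d)+q_i(\tilde\xi_p)\cup y_d=(d+r)\cdot y_{d+2^{i+1}-1}.
\]
The gap is in what you call ``the heart of the matter'': your geometric computation of $q_i(\tilde\xi_p)$. You assert that $\tilde\xi_p$ is pulled back from a virtual bundle on $\bb{RP}^n$ along $C\hookrightarrow\bb{RP}^n$, that this inclusion is an isomorphism on $H^1(-;\bb F_2)$, and that a rank-$r$ virtual bundle on $\bb{RP}^n$ has $q_i=r\cdot x^{2^{i+1}-1}$. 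None of these is established. The first is not proved anywhere (the diagonal-respecting condition only gives a stable framing on the Pozniak neighborhood, not a description of $\tilde\xi_p$ as a pull-back); the second is simply unknown, since {\sc Thm.\,\ref{thm:pt+connI}} computes $H^*(C;\bb F_2)$ only as a graded vector space, saying nothing about the restriction map; and the third is false as stated, since $q_i$ depends on the class in $\widetilde{KO}(\bb{RP}^n)$, not just the rank. Even granting the first two, you would still need to know that $y_1^{2^{i+1}-1}=y_{2^{i+1}-1}$ in $H^*(C)$, which is part of what the theorem is trying to prove.

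The paper's fix is embarrassingly simple: set $d=0$ in the displayed equation you already have. Since $Q_i(y_0)=Q_i(1)=0$ by the Leibniz rule, this reads $q_i(\tilde\xi_p)=r\cdot y_{2^{i+1}-1}$ directly, with no geometry required. Substituting back gives \eqref{eq:idq}. For \eqref{eq:idqdual}, the paper does not run the dual spectral sequence and then compare; it simply observes that reversing the Hamiltonian replaces $r$ by $r':=(n-r+2)\bmod(n+1)$, which has the opposite parity, and applies \eqref{eq:idq} verbatim with $r'$ in place of $r$. Your route --- deduce $\sigma_i$ by comparing the two identities at $d=0$ --- presupposes that $d=0$ lies in \emph{both} ranges, which is an extra hypothesis not present in the statement, so it does not prove \eqref{eq:idqdual} in full generality.
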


  \begin{figure}[h]
    \centering
    \includegraphics{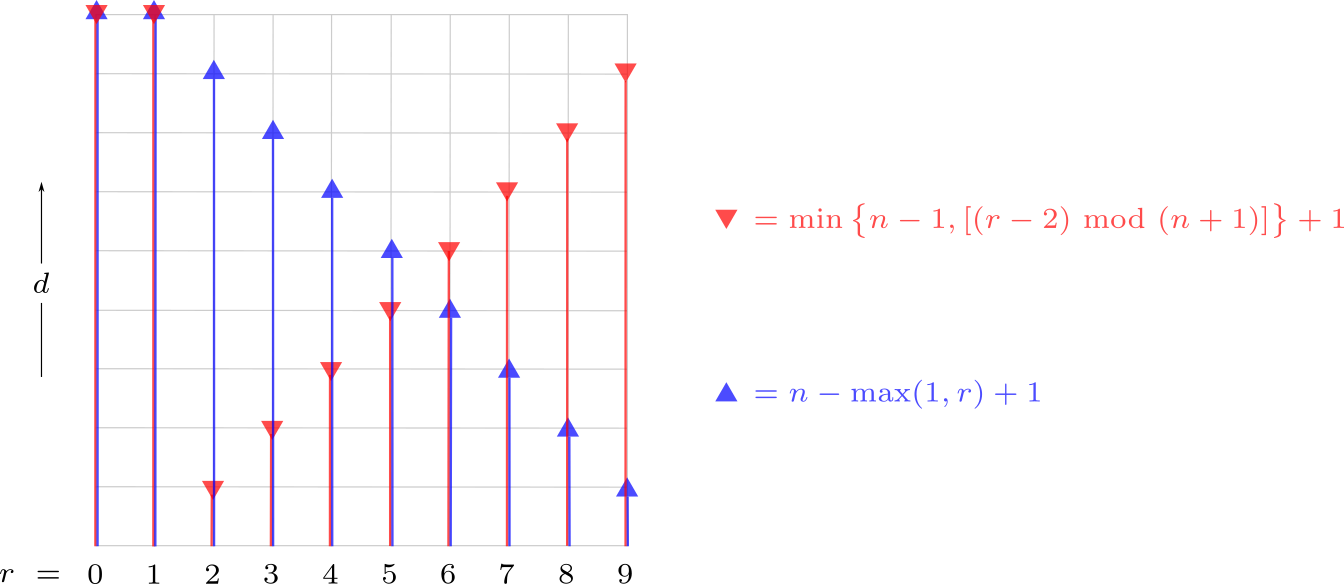}
    \caption{Depiction of the two ranges in which $d + 2^{i+1}$ must lie, for the identities (\ref{eq:idq}), (\ref{eq:idqdual}) to hold, as a function of $r$, in the sample case of $n = 9$.}\label{fig:ranges}
  \end{figure}

  \begin{proof}[Proof, using the Steenrod action]
    First, let us explain how the second identity follows from the first by ``duality,'' cf.~ {\sc Rmk.~\ref{rmk:dual}}. It is easier to interpret duality as coming from reversing the Hamiltonian: 
    \[ {\rm HF}^*(M,L;H_t) \cong {\rm HF}^*(M, \Phi_1(L); -H_{-t}). \]
    As Albers \cite{Albers} explains in the paragraph before the one containing equation (3.2), reversing the Hamiltonian induces a change in Maslov grading by $\mu^{-H_{-t}}(\tilde x) = n - \mu^{H_t}(\tilde x)$. In this case the old $r$ is replaced by $(n - r + 2) \; {\rm mod}\; n + 1$. The effect of this mod 2 is changing the parity of $r$, hence the $+1$'s in the parentheses in the second equation.
    
    So it remains to prove the first identity. First, notice that $r$ must be the rank of $\tilde \xi$ mod $(n + 1)$ (following the notation of {\sc Thm.~\ref{thm:ohpoz}}), because this is the only way to ensure that the spectral sequence does not have any total degree in which the $E_\infty$-page is zero. Now, the point is that in the range $0 \le d \le n - \max(1,r)$, we have a degree-$r$ isomorphism induced by the Oh-Pozniak spectral sequences associated to both filtrations:
    \[ H^{d+r}_{\rm sing}(\bb{RP}^n; \bb F_2) \overset\sim\longleftrightarrow {\rm HF}^{d+r}(M,L; \bb F_2) \overset\sim\longleftrightarrow H^{d+r}_{\rm sing}(C^{\tilde \xi}; \bb F_2),\] taking $x_{r+d} \leftrightarrow \theta y_d$ (where $\theta$ is the Thom class of $\tilde \xi$, and the $x$'s and $y$'s follow the notations of {\sc Thms.~\ref{thm:strong},~\ref{thm:pt+connI}}.) Cf.~ again {\sc Fig.~\ref{fig:sss}}. So for $0 \le d \le n - \max(1,r) - 2^{i+1} + 1$, both the domain and codomain of the $Q_i$ operation lie in the aforementioned range, and in particular under this correspondence one has
    \[\numeq\label{eq:corr} (r+d)x_{r+d+2^{i+1}-1} = Q_i(x_{r+d}) \leftrightarrow Q_i(\theta y_d) = Q_i\theta \cdot y_d + \theta \cdot Q_i y_d. \]
    Note that in particular, setting $d = 0$, we have $r x_{r+2^{i+1}-1} \leftrightarrow Q_i\theta \cdot y_0 = Q_i\theta$. Since the correspondence is bijective this means that $Q_i \theta = \theta \cdot (r y_{2^{i+1}-1})$, and more generally for arbitrary $d$ we get
    \[ (r+d)y_{d+2^{i+1}-1} = r \cdot  y_{2^{i+1}-1} y_d + Q_i y_d, \]
    after dividing by the Thom class. This concludes the proof, after rearranging factors.
  \end{proof}

  Here are some nicer-looking consequences:

  \begin{COR}\label{cor:pt+conn}
    One has
    \begin{enumerate}
      \item If $d$ lies in both ranges, then 
      \[y_{d+2^{i+1}-1} = y_d y_{2^{i+1}-1}, \text{ and } Q_i y_{d} = dy_{d+2^{i+1}-1}.\]
      The values of $r$ for which the conclusion is the strongest, cf.~ {\sc Fig.~\ref{fig:ranges}}, are $r \in \{0,1\}$, i.e.~ exactly those emerging from the examples of {\sc Rmk.~\ref{rmk:exofc}}, for which the intersection of the two ranges is $0 \le d \le n - 2^{i+1}$.
      \item When $d \le \frac{n+1}2 - 2^{i+1}$, then $d$ lies in at least one of the two ranges. In particular, $Q_i(y_d)$ satisfies some formula expressing it as a linear combination of $y_{d + 2^{i+1} - 1}$ and $y_{2^{i+1}-1} y_d$.
      \item Further specializing to $d = 1$, for $2^{i+2} < n$, we get a formula for $y_1^{2^{i+1}}$.
    \end{enumerate}
    Note that for the first identity in part {\sc i}, and for part {\sc iii}, no knowledge of cohomology operations is required to understand the statement.
  \end{COR}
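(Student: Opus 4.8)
The plan is to obtain all three parts directly from the two identities \ref{eq:idq} and \ref{eq:idqdual} of {\sc Thm.\,\ref{thm:pt+connII}} by elementary arithmetic with the two ranges, supplemented only by the power rule of {\sc Fact\,C} for part {\sc iii}. As a preliminary normalization I would record, using $0 \le r \le n$, that the residue $(r-2)\bmod(n+1)$ equals $n-1$ for $r=0$, equals $n$ for $r=1$, and equals $r-2$ for $2 \le r \le n$; consequently the upper end of the second range (the bound for \ref{eq:idqdual}) is $n-2^{i+1}$ for $r\in\{0,1\}$ and $r-1-2^{i+1}$ for $r\ge 2$, while the upper end of the first range (the bound for \ref{eq:idq}), namely $n-\max(1,r)+1-2^{i+1}$, is $n-2^{i+1}$ for $r\in\{0,1\}$ and $n-r+1-2^{i+1}$ for $r\ge 2$.

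For part {\sc i}: when $d$ lies in both ranges, both \ref{eq:idq} and \ref{eq:idqdual} are valid expressions for $Q_i(y_d)$, so their right-hand sides agree; subtracting them over $\bb F_2$ leaves $y_{d+2^{i+1}-1} + y_{2^{i+1}-1}y_d = 0$, i.e.\ $y_{d+2^{i+1}-1} = y_d\,y_{2^{i+1}-1}$. Feeding this back into \ref{eq:idq} collapses its right-hand side to $(2r+d)\,y_{d+2^{i+1}-1} = d\,y_{d+2^{i+1}-1}$ mod $2$. For the optimality assertion I would intersect the two ranges as a function of $r$: by the normalization above, for $r\ge 2$ the intersection is $0\le d\le \min\{n-r+1,\,r-1\}-2^{i+1}$, whose right end is maximized near $r=(n+2)/2$ — attained for $n$ odd at the integers $r=\tfrac{n+1}{2},\tfrac{n+3}{2}$, giving $\tfrac{n-1}{2}-2^{i+1}$ — whereas for $r\in\{0,1\}$ the intersection is the strictly larger $0\le d\le n-2^{i+1}$; hence $r\in\{0,1\}$, the values realized in {\sc Rmk.\,\ref{rmk:exofc}}, yield the strongest conclusion.

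For part {\sc ii}: it suffices to show that every $d$ with $d\le\tfrac{n+1}{2}-2^{i+1}$ lies in at least one of the two ranges, for then \ref{eq:idq} or \ref{eq:idqdual} applies verbatim and exhibits $Q_i(y_d)$ as an $\bb F_2$-combination of $y_{d+2^{i+1}-1}$ and $y_{2^{i+1}-1}y_d$. For $r\in\{0,1\}$ the first range is $0\le d\le n-2^{i+1}$, which already contains the interval $0\le d\le\tfrac{n+1}{2}-2^{i+1}$. For $r\ge 2$ the union of the two ranges is $0\le d\le\max\{n-r+1,\,r-1\}-2^{i+1}$, so I must check $\max\{n-r+1,\,r-1\}\ge\tfrac{n+1}{2}$ for every integer $r$: the first term is $\ge\tfrac{n+1}{2}$ iff $r\le\tfrac{n+1}{2}$ and the second iff $r\ge\tfrac{n+3}{2}$, and since $n$ is odd there is no integer strictly between $\tfrac{n+1}{2}$ and $\tfrac{n+3}{2}$, so one of the two always holds.

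For part {\sc iii}: set $d=1$; the hypothesis $2^{i+2}<n$ with $n$ odd forces $2^{i+2}\le n-1$, hence $1\le\tfrac{n+1}{2}-2^{i+1}$, so part {\sc ii} applies to $d=1$. The power rule $Q_i x = x^{2^{i+1}}$ for $x\in H^1(X;\bb F_2)$ on an honest space (cf.\ {\sc Fact\,C}, {\sc Prop.\,\ref{prop:qimu}}), applied to the manifold $C$ and the class $y_1$, gives $Q_i(y_1)=y_1^{2^{i+1}}$; equating this with the formula furnished by part {\sc ii} expresses $y_1^{2^{i+1}}$ explicitly as a linear combination of $y_{2^{i+1}}$ and $y_1\,y_{2^{i+1}-1}$ with coefficients read off from $r$ and from which of the two ranges $d=1$ falls in. I do not expect any genuine obstacle in this corollary; the only point requiring care is the case split $r\in\{0,1\}$ versus $r\ge 2$ in handling the residue $(r-2)\bmod(n+1)$, and keeping the integer coefficients of \ref{eq:idq}–\ref{eq:idqdual} consistently reduced mod $2$.
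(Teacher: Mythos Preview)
Your proof is correct and follows essentially the same approach as the paper's: add the two identities mod $2$ and substitute back for part {\sc i}, carry out the elementary arithmetic on the ranges for part {\sc ii}, and invoke the power rule for part {\sc iii}. You have simply written out in full the case split on $r$ and the numerics that the paper leaves implicit under ``elementary arithmetic.''
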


  \begin{proof}
    The first fact follows by adding both identities mod 2, and then substituting back into either of them. The second one just follows from elementary arithmetic. The third follows from the power rule $Q_iy_1 = y_1^{2^{i+1}}$.
  \end{proof}

  We end with an application that allows us to compute some characteristic classes of this connected clean component $C$.

  \begin{DEF}\label{def:qi}
    Given a virtual bundle $E$ on a space $X$, we define $q_i(E) \in H^{2^{i+1}-1}(X; \bb F_2)$ to be the characteristic class
    \[ q_i(E) := \theta_E^{-1} \cdot Q_i(\theta_E), \]
    where $\theta_E$ is the Thom class of $X^E$, and $\theta_E^{-1}\cdot$ denotes the inverse of the Thom isomorphism.
  \end{DEF}

  \begin{PROP}
    One has
    \begin{enumerate}
      \item $q_i(E \oplus F) = q_i(E) + q_i(F)$ for any two bundles $E, F$;
      \item $q_i(L) = w_1(L)^{2^{i+1}-1}$ for any line bundle $L$;
      \item In terms of the Stiefel-Whitney roots $\{x_j\}$, $q_i$ is given by the symmetric polynomial $\sum_j x_j^{2^{i+1}-1}$, which can be used to express $q_i$ in terms of the $w_k$'s, e.g.
      \[ q_0 = w_1, \qquad q_1 = w_1^3 + w_1w_2 + w_3. \]
    \end{enumerate}
  \end{PROP}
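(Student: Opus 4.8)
The plan is to deduce all three items from the properties of the Milnor primitives recorded in {\sc Ex.\,\ref{exp:miln}}/{\sc Fact\,C} together with standard Thom-class bookkeeping, proving them in the stated order since (iii) uses (i) and (ii). For additivity, I would use that Thomification is monoidal, so $X^{E\oplus F}$ is the fiberwise smash $X^{E}\wedge_X X^{F}$ and, under the induced $H^*(X;\bb F_2)$-bilinear pairing $H^*(X^{E};\bb F_2)\otimes_{H^*(X;\bb F_2)}H^*(X^{F};\bb F_2)\to H^*(X^{E\oplus F};\bb F_2)$, the canonical $\bb F_2$-Thom class of $E\oplus F$ is the product $\theta_E\cdot\theta_F$ (the diagonal pullback of the external class $\theta_E\times\theta_F$; recall every virtual bundle and every $R$-local system is canonically $\bb F_2$-oriented, cf.~{\sc Def.\,\ref{def:ls}}). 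The Leibniz rule for $Q_i$ — property (ii) of {\sc Fact\,C}, with sign irrelevant mod $2$ — gives $Q_i(\theta_E\cdot\theta_F)=Q_i(\theta_E)\cdot\theta_F+\theta_E\cdot Q_i(\theta_F)$, and dividing by $\theta_E\theta_F$ through the Thom isomorphism yields $q_i(E\oplus F)=q_i(E)+q_i(F)$; the argument is insensitive to whether $E,F$ are honest, virtual, or local systems.

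For line bundles, by naturality of Thom classes, the Thom isomorphism and $Q_i$ it suffices to compute $q_i(\gamma_1)$ for the tautological line bundle $\gamma_1\to\bb{RP}^\infty$ and then pull back along a classifying map. Its Thom space $\mathrm{Th}(\gamma_1)=\mathrm{colim}_n\bb{RP}^{n+1}=\bb{RP}^\infty$ is an honest space, whose Thom class $\theta$ is the degree-$1$ generator $u$, with $\theta\cdot w_1(\gamma_1)^k=u^{k+1}$ under the Thom isomorphism. Since $\theta=u\in H^1$ of an actual space, the power rule — property (iii) of {\sc Fact\,C} — gives $Q_i(\theta)=\theta^{2^{i+1}}=\theta\cdot w_1(\gamma_1)^{2^{i+1}-1}$, hence $q_i(\gamma_1)=w_1(\gamma_1)^{2^{i+1}-1}$, and pulling back gives $q_i(L)=w_1(L)^{2^{i+1}-1}$ for any line bundle $L$.

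For the symmetric-function formula I would invoke the mod-$2$ splitting principle: there is $\pi\colon\mathrm{Fl}(E)\to X$ with $\pi^*$ injective on $H^*(-;\bb F_2)$ and $\pi^*E\cong L_1\oplus\dots\oplus L_k$, where $x_j:=w_1(L_j)$ are the Stiefel--Whitney roots. Parts (i) and (ii) give $\pi^*q_i(E)=\sum_j q_i(L_j)=\sum_j x_j^{2^{i+1}-1}$, and injectivity of $\pi^*$ identifies $q_i(E)$ with this power sum; rewriting it in the elementary symmetric functions $w_k(E)$ via Newton's identities mod $2$ produces the displayed formulas (from $p_1=e_1$ one gets $q_0=w_1$, and from $p_3=e_1^3-e_1e_2+3e_3\equiv e_1^3+e_1e_2+e_3$ mod $2$ one gets $q_1=w_1^3+w_1w_2+w_3$). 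Since (i) forces $q_i(-E)=q_i(E)$ — using $q_i(0)=\theta_0^{-1}\cdot Q_i(1)=0$, as $Q_i$ annihilates the unit mod $2$ by Leibniz — the computation extends to virtual bundles.

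The only genuinely delicate point is the ``relative'' Leibniz step in (i): one must check that the Cartan formula for $Q_i$ is compatible with the $H^*(X;\bb F_2)$-bilinear product on the two Thom cohomologies. This is immediate from the ordinary Cartan formula applied to the external product $X^{E}\wedge X^{F}\to(X\times X)^{E\boxplus F}$ together with naturality under the diagonal $X\to X\times X$; alternatively one could bypass (i) entirely and derive everything from the splitting principle once (ii) is known, but that merely trades this check for the analogous verification that $Q_i$ commutes with pullback along the (infinite-dimensional) flag bundle. Every remaining step is a direct application of {\sc Fact\,C}.
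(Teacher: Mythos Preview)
Your proof is correct and follows essentially the same route as the paper: additivity via the Leibniz rule applied to the product of Thom classes (the paper phrases this through the external sum $E\boxplus F$ and pullback along the diagonal, which is exactly the clean version you describe in your final paragraph), the line-bundle case via the power rule on the universal $\gamma_1\to\bb{RP}^\infty$, and part {\sc iii} via the splitting principle. Your extra remarks on Newton's identities and on $q_i(-E)=q_i(E)$ are welcome elaborations but not logically needed for the statement as written.
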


  \begin{proof}
    For part {\sc i}, note that $E \oplus F$ is the restriction of $E \boxplus F$ via the diagonal, and that $\theta_{E \boxplus F} = \theta_E \otimes \theta_F$ under the isomorphism ${\rm Th}(E \boxplus F) \cong {\rm Th}(E) \wedge {\rm Th}(F)$. Therefore, by the Leibniz rule we have
    \[ \begin{aligned}
      q_i(E \boxplus F) &= (\theta_E \otimes \theta_F)^{-1} Q_i(\theta_E \otimes \theta_F) = (\theta_E \otimes \theta_F)^{-1} (Q_i\theta_E \otimes \theta_F) + (\theta_E \otimes \theta_F)^{-1} (\theta_E \otimes Q_i\theta_F) \\
      &= \theta_E^{-1} Q_i(\theta_E) \otimes 1 + 1 \otimes \theta_F^{-1} Q_i(\theta_F) = q_i(E) \otimes 1 + 1 \otimes q_i(F).
    \end{aligned} \]
    Pulling back via the diagonal gives the desired $q_i(E \oplus F) = q_i(E) + q_i(F)$.

    For part {\sc ii}, it suffices to prove the formula for the universal case of $\scr O(1)$ on $\bb{RP}^\infty$. Since the Thom space of $\scr O(1)$ is well-known to also be homeomorphic to $\bb{RP}^\infty$, it follows that the Thom class $\theta_{\scr O(1)}$ generates the (nonunital) cohomology ring of the Thom space. We already know that on rank-1 classes, the $Q_i$ satisfies the power rule, i.e.~
    \[ Q_i(\theta_{\scr O(1)}) = \theta_{\scr O(1)}^{2^{i+1}}. \]
    Thus,
    \[ q_i(\scr O(1)) = \theta_{\scr O(1)}^{-1} \theta_{\scr O(1)}^{2^{i+1}} = [\scr O(1)]^{2^{i+1}-1}, \]
    as desired.

    Part {\sc iii} now follows by the Splitting Principle from parts {\sc i} and {\sc ii}.
  \end{proof}

  \begin{COR}\label{cor:qirpn}
    On $\bb{RP}^n$, one has
    \[ q_i(T\bb{RP}^n) = q_i(-T\bb{RP}^n) = \begin{cases}
      x^{2^{i+1}-1} & \text{if } n \text{ even}, \\
      0 & \text{if } n \text{ odd},
    \end{cases} \]
    where $x \in H^1(\bb{RP}^n; \bb F_2)$ is the generator.
  \end{COR}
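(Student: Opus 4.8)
The plan is to reduce everything to the additivity and line-bundle formulas of the preceding Proposition, via the Euler sequence. First I would record that $q_i$ is a \emph{stable} characteristic class: since the Thom class of $E \oplus \bb R^k$ is the $k$-fold suspension of $\theta_E$ and $Q_i$ commutes with suspension, one has $q_i(E \oplus \bb R^k) = q_i(E)$. In particular $q_i$ of a trivial bundle vanishes (because $2^{i+1}-1 \ge 1$), so $q_i$ descends to virtual bundles compatibly with the stated additivity. A second elementary observation: applying additivity to $E \oplus (-E)$ gives $q_i(E) + q_i(-E) = q_i(0) = 0$, hence over $\bb F_2$ one always has $q_i(-E) = q_i(E)$; this disposes of the claim $q_i(-T\bb{RP}^n) = q_i(T\bb{RP}^n)$ at once, so it remains only to compute $q_i(T\bb{RP}^n)$.

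Next I would invoke the Euler sequence, which in $\widetilde{KO}(\bb{RP}^n)$ reads $[T\bb{RP}^n] = (n+1)\,[\scr O_\bb R(1)]$ (equivalently $T\bb{RP}^n \oplus \bb R \cong \scr O_\bb R(1)^{\oplus(n+1)}$). Combining this with stability, additivity, and the line-bundle formula $q_i(\scr O_\bb R(1)) = w_1(\scr O_\bb R(1))^{2^{i+1}-1} = x^{2^{i+1}-1}$, I get
\[
  q_i(T\bb{RP}^n) = q_i\big(\scr O_\bb R(1)^{\oplus(n+1)}\big) = (n+1)\,q_i(\scr O_\bb R(1)) = (n+1)\,x^{2^{i+1}-1}.
\]
Reducing the coefficient $n+1$ modulo $2$ yields $x^{2^{i+1}-1}$ when $n$ is even and $0$ when $n$ is odd, which is exactly the assertion for $q_i(T\bb{RP}^n)$; the assertion for $q_i(-T\bb{RP}^n)$ then follows from the second observation above.

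There is essentially no obstacle here; the only points requiring a moment's care are (i) that $q_i$ is genuinely insensitive to stabilization, so that the virtual-bundle identity in $\widetilde{KO}(\bb{RP}^n)$ may be fed directly into the additivity formula rather than just the unstable isomorphism, and (ii) that $q_i$ of the trivial summand $\bb R$ vanishes precisely because $2^{i+1}-1 \ge 1$. Both are immediate from the definition $q_i(E) = \theta_E^{-1}\,Q_i(\theta_E)$ and the naturality of $Q_i$ with respect to suspension.
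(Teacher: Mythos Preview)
Your proof is correct and follows essentially the same route as the paper: invoke the Euler sequence $T\bb{RP}^n \oplus \bb R \cong \scr O_\bb R(1)^{\oplus(n+1)}$, then apply additivity and the line-bundle formula from the preceding Proposition. You are simply more explicit than the paper about stability under trivial summands and about the consequence $q_i(-E)=q_i(E)$, both of which the paper leaves as implicit ``properties we have just proven.''
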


  \begin{proof}
    By the Euler sequence, one has $T\bb{RP}^n \oplus \bb R \cong \scr O(1)^{\oplus (n+1)}$, and the result now follows from the properties we have just proven.
  \end{proof}

  \begin{THM}[Point + Connected, part III]\label{thm:pt+connIII}
    With the same conventions as in part II, one has the non-vanishing result
    \[ q_i(TC) = y_{2^{i+1}-1} \]
    for all $i \ge 0$ satisfying $2^{i+1} \le \min\big\{n-1, n-r, \left[(r - 2)\;{\rm mod}\;(n+1)\right]\big\} + 1$, cf.~again the intersection of the two conditions in {\sc Fig.~\ref{fig:ranges}}.
  \end{THM}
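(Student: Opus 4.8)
The plan is to compute $q_i$ of the local system $\tilde\xi$ that the Oh--Pozniak spectral sequence {\sc(Thm.\,\ref{thm:ohpoz})} attaches to $C$, and of its analogue $\tilde\xi'$ for the time-reversed flow, and then to combine them by additivity of $q_i$ {\sc(Def.\,\ref{def:qi})} together with the duality relation $\tilde\xi'\simeq-\tilde\xi-TC$.

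First I would record what the forward spectral sequence already gives. Writing $\theta$ for the Thom class of $C^{\tilde\xi}$, the proof of {\sc Thm.\,\ref{thm:pt+connII}} produces a degree-$r$ comparison isomorphism $H^{*}_{\rm sing}(\bb{RP}^n;\bb F_2)\leftrightarrow{\rm HF}^{*}(M,L;\bb F_2)\leftrightarrow H^{*}_{\rm sing}(C^{\tilde\xi};\bb F_2)$ carrying $x_r\leftrightarrow\theta$; applying $Q_i$ and invoking {\sc Thm.\,\ref{thm:strong}} forces $Q_i\theta=\theta\cdot(r\,y_{2^{i+1}-1})$, that is $q_i(\tilde\xi)=r\,y_{2^{i+1}-1}$. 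This extraction is legitimate precisely when the target class $x_{r+2^{i+1}-1}$ still lies in the comparison range $0\le\ast\le n-\max(1,r)$, i.e. when $2^{i+1}\le\min(n-1,\,n-r)+1$. Next I would run the identical argument for the time-reversed Hamiltonian $-H_{-t}$, whose clean intersection is again of the form point $\sqcup\,C$, but for which $\mu(p)$ is replaced by $n-\mu(p)$, so that the residue $r$ is replaced by $r'\equiv n-r+2\pmod{n+1}$ and hence $r'\equiv r+1\pmod 2$ since $n$ is odd. Exactly as before, and with the same generators $y_d$, this gives $q_i(\tilde\xi')=r'\,y_{2^{i+1}-1}=(r+1)\,y_{2^{i+1}-1}$, valid when $2^{i+1}\le\min(n-1,\,n-r')+1=\min\{n-1,\,[(r-2)\bmod(n+1)]\}+1$.

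The remaining ingredient is $\tilde\xi'\cong-\tilde\xi-TC$, up to a trivial summand and an $N_\mu$-shift, neither of which affects $q_i$. Restricting to Pozniak's isolating neighborhood {\sc(Prop.\,\ref{prop:poz})}, the Floer construction near $C$ coincides with a twisted Morse theory on $C$, and the passage from $H_t$ to $-H_{-t}$ corresponds to replacing the Morse function $f$ by $-f$, whose flow category is the opposite of that of $f$ (same moduli spaces, source and target interchanged). As recorded in {\sc Rmk.\,\ref{rmk:dual}} and established in \cite{me}, the Cohen--Jones--Segal construction sends a Morse flow category on the closed manifold $C$ to a Thom spectrum $C^{\tilde\xi}$ and sends the opposite flow category to its Spanier--Whitehead dual, so $C^{\tilde\xi'}\simeq D(C^{\tilde\xi})\cong C^{-\tilde\xi-TC}$; since the $U$-brane is diagonal-respecting, both $\tilde\xi$ and $\tilde\xi'$ are honest virtual bundles, so this is genuinely an equality of virtual bundles. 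Then by additivity of $q_i$ and the identity $q_i(-E)=q_i(E)$ over $\bb F_2$,
\[ q_i(TC)=q_i(\tilde\xi')-q_i(\tilde\xi)=\big((r+1)-r\big)\,y_{2^{i+1}-1}=y_{2^{i+1}-1}, \]
and the hypothesis on $i$ is exactly the conjunction of the two validity ranges above, namely $2^{i+1}\le\min\{n-1,\,n-r,\,[(r-2)\bmod(n+1)]\}+1$; note that this range also guarantees $2^{i+1}-1\le n-1=\dim C$, so that $y_{2^{i+1}-1}\neq0$ and the statement is a genuine non-vanishing.

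I expect the duality step $\tilde\xi'\cong-\tilde\xi-TC$ to be the part that needs the most care: the global form of this identification is only conjectured in {\sc Rmk.\,\ref{rmk:dual}}, so I would keep the argument strictly inside the Pozniak neighborhood, where the theory is ``exact-like'' and the classical Morse-theoretic duality is unambiguous, and I would verify that the ambient $N_\mu$-shift and trivial summand are invisible to $q_i$ (which follows from the properties of $q_i$ recorded just after {\sc Def.\,\ref{def:qi}}). As a consistency check, for $C=\bb{RP}^{n-1}$ (so $n-1$ is even and $r\in\{0,1\}$) the formula gives $q_i(T\bb{RP}^{n-1})=x^{2^{i+1}-1}$ for all $2^{i+1}\le n$, in agreement with {\sc Cor.\,\ref{cor:qirpn}}.
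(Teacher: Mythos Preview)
Your overall strategy --- compute $q_i$ of two local systems on $C$ whose sum recovers $-TC$, then add --- is exactly the paper's. The difference lies in how you obtain the second local system. The paper does not reverse the Hamiltonian; instead it applies \emph{homology} to the same filtered CJS spectrum (the ``dual spectral sequence'' of {\sc Rmk.\,\ref{rmk:dual}}) and rewrites the $E^1$-page via classical Atiyah duality of Thom spectra, $H_k(C^{\tilde\xi})\cong H^{-k}(C^{-\tilde\xi-TC})$. This produces $-\tilde\xi-TC$ for free, with nothing to identify. On the $\bb{RP}^n$ side the same Atiyah duality yields $(\bb{RP}^n)^{-T\bb{RP}^n}$, which is why the paper must invoke $q_i(-T\bb{RP}^n)=0$ from {\sc Cor.\,\ref{cor:qirpn}} --- a step you sidestep, but at the cost of your identification $\tilde\xi'\cong-\tilde\xi-TC$.

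Your route is not wrong, but that identification is precisely what the paper is engineered to avoid. {\sc Rmk.\,\ref{rmk:dual}} only \emph{conjectures} the global match between the dual and reversed-flow spectral sequences, and your local substitute (that CJS of the opposite Morse flow category is the Spanier--Whitehead dual, with the \emph{transported $U$-orientations}) is plausible but not established in \cite{me} in the form you need: one must track how the induced $U$-structure on the moduli spaces transforms under flow reversal, not just the underlying stratified manifolds. The paper's Atiyah-duality step is entirely classical and bypasses this. So: same skeleton, but the paper trades your duality-of-flow-categories lemma for a cohomological duality that is already on the shelf, plus one extra vanishing input $q_i(-T\bb{RP}^n)=0$.
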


  \begin{proof}
    We begin by noting that $q_i(TC) = q_i(-TC)$ by additivity, so it suffices to compute the latter. Also, we remark that the condition imposed on $i$ is precisely equivalent to the intersection of the two conditions from part {\sc ii} of the theorem, for $d = 0$ (i.e.~ we want to ensure that there exists at least one $d$ satisfying both constraints, and if any $d$ works, then certainly $d = 0$ works.)

    Referring back to {\sc Thm.~\ref{thm:pt+connII}}, we have proved that for all $0 \le d \le n - \max(1, r) - 2^{i+1} + 1$, one has, cf.~ \ref{eq:corr}
    \[ (r + d) x_{r+d+2^{i+1}-1} = Q_i(x_{r+d}) \leftrightarrow Q_i(\theta_{\tilde \xi} y_d) = \theta_{\tilde \xi} \cdot (q_i(\tilde \xi) y_d + Q_iy_d), \]
    where now we emphasize the dependence of the Thom class $\theta_{\tilde\xi}$ on $\tilde \xi$ by its subscript. Setting $d = 0$, we get
    \[ r x_{r + 2^{i+1} - 1} = Q_i(x_r) \leftrightarrow Q_i(\theta_{\tilde \xi}) = \theta_{\tilde \xi} \cdot q_i(\tilde \xi), \]
    since $y_0$ is the identity in the cohomology ring of $C$. Since $x_{r+2^{i+1}-1} \leftrightarrow \theta_{\tilde \xi}y_{2^{i+1}-1}$ under the correspondence, it follows (after dividing by the Thom class) that
    \[ \numeq\label{eq:c1} r y_{2^{i+1} - 1} = q_i(\tilde \xi). \]
    Analogously, using the dual spectral sequence, cf.~ {\sc Rmk.~\ref{rmk:dual}}, we have
    \[ r' x_{r'+2^{i+1}-1} = Q_i(\theta_{-T\bb{RP}^n} x_{r'}) \leftrightarrow \theta_{-\tilde \xi - TC} \cdot q_i(-\tilde \xi - TC), \]
    where $r' := (n - r + 2)$ mod $n + 1$. We have implicitly used that $q_i(-T\bb{RP}^n) = 0$, cf.~ {\sc Cor.~\ref{cor:qirpn}}. Again, since $x_{r'+2^{i+1}-1} \leftrightarrow \theta_{-\tilde \xi - TC} y_{2^{i+1}-1}$, we get
    \[ \numeq\label{eq:c2} r' y_{2^{i+1} - 1} = q_i(-\tilde \xi - TC). \]
    Finally, keeping in might that modulo 2 one has $r' \equiv r + 1$, and adding the two equations \ref{eq:c1}, \ref{eq:c2}, we get
    \[ y_{2^{i+1} - 1} = q_i(-TC), \]
    as desired.
  \end{proof}

  \begin{RMK}[Further directions]\label{rmk:next}
    The applications in this final subsection are meant to illustrate the techniques that emerge from our general theory, and are by no means sharp, or exhaustive. For example, we expect the following ideas to also be fruitful:
    \begin{enumerate}
      \item More generally, if $\bb{RP}^n \cap \Phi_1(\bb{RP}^n)$ is a disjoint union of $k > 0$ connected components $C_1 \sqcup \cdots \sqcup C_k$, which moreover satisfy $\dim C_1 + \cdots + \dim C_k \le n - k + 1$, then the inequality is in fact sharp, they must have the same $\bb F_2$-cohomology as the real projective spaces of the respective dimensions, and likewise similar results should hold at the level of $Q_i$ Steenrod operations and $q_i$ characteristic classes.
      \item If $\bb{RP}^n \subset \bb{CP}^n$ is replaced by the real locus of a real complete intersection $L \subset M$, at least when $\pi_1 L \to \pi_{1}\bb{RP}^n = \mathbb Z/2$ is an isomorphism, the pair $(M,L)$ should once again satisfy the same constraints as in the beginning of {\sc\S\ref{ssec:setup}}, and we once again expect the same techniques to produce analogous results, which should be particularly interesting when the Maslov number is roughly smaller than the dimension of $L$, but greater than half of it, cf.~ the Albers-PSS range \ref{eq:pssrange}.
      \item Analogous results should work for generalized cohomologies $Z^*$ with finite homotopy-support, where the ends of the ranges are shaved off appropriately, to account for the fact that $Z^*$ is not concentrated in degree 0. An interesting place to start would be a Postnikov truncation of the connective Morava $k(n)$-theory, since the Atiyah-Hirzebruch spectral sequence has $d_2$ differential exactly the $Q_n$ operation. Roughly, we learn that $k(n)$-theory of $C^\xi$ must be zero in a certain range.
      \item There does not seem to be anything preventing one from considering intersection Floer cohomology of two distinct (i.e.~ not Hamiltonian isotopic Lagrangians) $L, K \subset M$, or in relaxing the topological conditions $\pi_1(M,L) = 0, \pi_2(M,L) = \bb Z$. The trouble, however, is that the theory will be ${\rm gcd}(N_\mu(L), N_\mu(K))$-periodic, so the range will significantly decrease unless we choose Lagrangians with appropriate Maslov numbers. Also, if $\pi_2(M,L)$ is too big, one will begin to encounter infinitely many critical points with the same Maslov index, in which case working over some suitable notion of Novikov ring becomes necessary.
      \item As alluded to in {\sc Rmk.\ref{rmk:transgress}}, it should morally not be that difficult to complete the calculation of the Steenrod squares, provided one interprets them correctly in terms of real Gromov-Witten theory. This would likely sharpen the ranges in which we can say something about $Q_i$ and $q_i$. Cf.~ also {\sc Rmk.~\ref{rmk:obs}} about real Gromov-Witten theory potentially being related to obstructions seen at the level of Steenrod algebra actions on Floer cohomology.
      \item Note that the non-vanishing of $Q_i$ implies the non-vanishing of all the ${\rm Sq}^{2^j}$, for all $j \le i$. Thus, it is likely that Blakey \cite{Blakey}'s ideas could be adapted to the monotone setup, potentially providing new lower bounds for the number of (possibly degenerate) intersection points with a Hamiltonian isotopy.
    \end{enumerate} 
  \end{RMK}

    \newpage
    \bibliographystyle{abbrvurl} 
    \bibliography{MonLagr}{} 

\end{document}